\newcommand{\zz}{\ensuremath{\mathbb{Z}}}
\newcommand{\ff}{\ensuremath{\mathbb{F}}}
\newcommand{\bb}[1]{\mathbb{#1}}
\newcommand{\del}{\partial}
\renewcommand{\cal}[1]{\mathcal{#1}}
\newcommand{\spinc}{\text{spin}^\text{c}}
\newcommand{\pmd}{\ensuremath{{\pm\circ}}}
\DeclareMathOperator{\Mod}{Mod}
\theoremstyle{definition}
\newtheorem{thm}{Theorem}[section]
\newtheorem{defn}[thm]{Definition}
\newtheorem{lem}[thm]{Lemma}
\newtheorem{cor}[thm]{Corollary}
\newtheorem{prop}[thm]{Proposition}
\newtheorem{rem}[thm]{Remark}
\newtheorem*{ack}{Acknowledgements}
\newtheorem{mainthm}{Theorem}
\newtheorem{subthm}{Theorem}
\titleformat{\section}{\Large\bfseries\filcenter}{\thesection.}{1em}{}
\title{Fixed-point-free pseudo-Anosovs, and genus-two L-space knots in the Poincar\'e sphere}
\author{Braeden Reinoso}
\date{}
\begin{document}

\maketitle

\begin{abstract}
We classify genus-two L-space knots in the Poincar\'e homology sphere. This leads to the second knot Floer homology detection result for a knot of genus at least two, and the first such result outside of $S^3$. The argument uses the theory of train tracks and folding automata, and builds off of recent work connecting knot Floer homology of fibered knots to fixed points of mapping classes. As a consequence of our proof technique, we almost completely classify genus-two, hyperbolic, fibered knots with knot Floer homology of rank 1 in their next-to-top grading in any 3-manifold.
\end{abstract}

\section{Introduction}

The Poincar\'e homology sphere $\cal{P}=\Sigma(2,3,5)$ is a small Seifert-fibered space with three exceptional fibers of orders 2, 3, and 5, respectively. Other than $S^3$, the Poincar\'e sphere is the only known irreducible $\bb{Z}$-homology sphere which is also a Heegaard Floer L-space. Because the Floer theory of $\cal{P}$ is so simple, many Floer-theoretic questions about $S^3$ have natural analogues in $\cal{P}$. This paper will explore the analogue of a surgery question that was recently answered for $S^3$ in \cite{FRW}: which genus-two knots admit non-trivial surgeries to Heegaard Floer L-spaces?

Consider the 5-braid $\beta=(\sigma_1\sigma_2\sigma_3\sigma_4)^3$, which has braid closure the (3,5)-torus knot $\widehat{\beta}=T(3,5)$. Thinking of the Poincar\'e homology sphere $\cal{P}$ as the double cover over $S^3$ branched along $T(3,5)$, we may lift the braid axis for $\widehat{\beta}=T(3,5)$ to a knot $\cal{K}\subset\cal{P}$. The knot $\cal{K}$ is naturally a fibered knot, and appears as the order-three fiber of the Seifert fibration of $\cal{P}$. Moreover, $\cal{K}$ is an L-space knot: its exterior is Seifert-fibered with two exceptional fibers, so Dehn filling along the fibration slope produces a Lens space. We will prove:

\begin{mainthm}\label{thm:lspace}
$\cal{K}$ is the only genus-two L-space knot in $\cal{P}$ with irreducible exterior.
\end{mainthm}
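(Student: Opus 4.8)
The plan is to pin down the monodromy of such a knot so rigidly that it must coincide with that of $\cal{K}$, treating the Seifert-fibered and hyperbolic cases separately and following the blueprint of the recent $S^3$ classification. For the reduction: since $\cal{P}$ is a $\zz$-homology sphere and a Heegaard Floer L-space, a knot $K\subset\cal{P}$ with an L-space surgery is null-homologous and has $\widehat{HFK}(\cal{P},K,g)\cong\zz$ in its top Alexander grading $g$ (the large-surgery/mapping-cone formula together with the L-space hypothesis forces rank-one $\widehat{A}_i$'s, exactly as over $S^3$); since the exterior of $K$ is irreducible, Ni's fiberedness criterion (valid for null-homologous knots with irreducible exterior in any closed orientable $3$-manifold) shows $K$ is fibered, with fiber a once-punctured genus-two surface $\Sigma_{2,1}$ and monodromy $\phi\in\Mod(\Sigma_{2,1})$. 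The structure theorem for L-space knots applies over any $\zz$HS L-space and forces $\widehat{HFK}(\cal{P},K,\cdot)$ to be a staircase; in genus two this leaves exactly two possible Alexander polynomials, $t^2-t+1-t^{-1}+t^{-2}$ and $t^2-1+t^{-2}$, so $\widehat{HFK}(\cal{P},K,1)$ has rank $1$ or $0$.

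Next I would apply the Nielsen--Thurston trichotomy to $\phi$. If $\phi$ is periodic, the exterior of $K$ is Seifert-fibered, and since $\cal{P}$ has a unique Seifert fibration, $K$ is one of its fibers; inspecting the regular fiber and the three exceptional fibers shows $\cal{K}$ is the only one of genus two, with Alexander polynomial $t^2-t+1-t^{-1}+t^{-2}$. If $\phi$ is reducible, the exterior is toroidal, so $K$ is a satellite; the genus-two bound combined with the structural constraints on satellite L-space knots --- which in low genus force a cable for which the cabling criterion then fails --- rules out any new L-space knot. The rank-$0$ possibility above forces $\phi$ to be periodic or reducible (a pseudo-Anosov monodromy always makes $\widehat{HFK}(\cal{P},K,1)$ nonzero), so it is covered by the case just handled; from here on $\widehat{HFK}(\cal{P},K,1)\cong\zz$ and $\phi$ is pseudo-Anosov.

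The heart of the argument is the pseudo-Anosov case. By the formula relating the next-to-top knot Floer homology of a fibered knot to the fixed points of its monodromy, the equality $\dim\widehat{HFK}(\cal{P},K,1)=1$ forces $\phi$ to be \emph{fixed-point free} in the sense of the paper: all periodic-point data is concentrated at the puncture, with the least possible complexity. I would then enumerate such maps combinatorially --- represent each pseudo-Anosov by an invariant train track on $\Sigma_{2,1}$ and traverse the folding automaton, the finite directed graph of train tracks and elementary folds/splits --- and observe that the fixed-point-free condition caps the complexity of the tracks one can reach, so that only finitely many conjugacy classes of $\phi$ survive, which one lists explicitly. Proving this automaton search is simultaneously finite and exhaustive is the part I expect to be the main obstacle; it is also precisely the input behind the advertised classification of genus-two hyperbolic fibered knots with rank-one next-to-top knot Floer homology in an arbitrary $3$-manifold. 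Finally, for each candidate on that finite list I would rule out that the resulting fibered knot is a genus-two L-space knot in $\cal{P}$: either the ambient $3$-manifold of that fibered knot is not $\cal{P}$, or its Alexander polynomial is neither staircase candidate, or a direct Heegaard Floer computation shows that no surgery is an L-space. Combined with the periodic and reducible cases, this shows $\cal{K}$ is the unique genus-two L-space knot in $\cal{P}$ with irreducible exterior.
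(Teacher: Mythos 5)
Your overall architecture matches the paper's: Floer theory forces fiberedness and a staircase, the Nielsen--Thurston trichotomy splits into periodic/reducible/pseudo-Anosov cases, the next-to-top rank forces the monodromy to be fixed-point-free, and a train-track enumeration finishes the job. But the step you yourself flag as the main obstacle --- making the automaton search finite and exhaustive --- is exactly where the paper's argument has content that your outline is missing, and it cannot be filled by brute force on $\Sigma_{2,1}$. The paper first uses the L-space hypothesis to bound the fractional Dehn twist coefficient, $0<|c(K)|<1$ (tightness of the supported contact structure plus the Honda--Kazez--Mati\'c and Hedden--Mark bounds). This does three things at once: it rules out the stratum with 1-pronged boundary (where $c(K)\in\zz$); by Baldwin--Hu--Sivek it forces the monodromy to commute with the hyperelliptic involution, so via Birman--Hilden the whole problem descends to pseudo-Anosov $5$-braids on the marked disk, where the folding automata are actually available and where ``the ambient manifold is $\cal{P}$'' becomes the checkable condition $\widehat{\beta}=T(3,5)$; and it cuts the a priori infinite families $\Delta^{2k}\beta_i^{\pm1}$ (a geometric representative determines a mapping class only up to boundary twists, hence infinitely many fibered knots in different manifolds) down to finitely many braids to test. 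Without the FDTC input, your ``finitely many conjugacy classes of $\phi$'' is not finite in the relevant sense, and the unrestricted enumeration on $\Sigma_{2,1}$ includes the 6-pronged-boundary stratum, which even the paper does not classify by automaton search (it is dispatched separately by a dilatation-minimization argument).

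Two smaller issues in the non-hyperbolic cases. The constraints on L-space knots in an integer homology sphere L-space (Tange) force the top two Alexander coefficients to be nonzero, so your candidate $t^2-1+t^{-2}$ is already excluded and the polynomial is pinned to $t^4-t^3+t^2-t+1$; the paper then kills the toroidal case by observing that a reducible monodromy makes the characteristic polynomial of $h_\ast$ on $H_1(S)$ reducible, whereas this polynomial is irreducible. Your proposed route through the structure theory of satellite L-space knots and the cabling criterion is an $S^3$ toolkit whose validity in $\cal{P}$ you would need to establish separately; the Alexander-polynomial argument avoids that entirely.
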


The assumption that the knot exterior be irreducible is not crucial. Because $\cal{P}$ itself is irreducible, any knot in $\cal{P}$ with reducible exterior is contained in a 3-ball. So, to understand L-space knots in $\cal{P}$ with reducible exterior, it suffices to study L-space knots in $S^3$.

\begin{cor}
$\cal{K}$ and $T(2,5)\subset B^3\subset\cal{P}$ are the only genus-two L-space knots in $\cal{P}$.
\end{cor}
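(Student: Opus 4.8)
The plan is to reduce to Theorem~\ref{thm:lspace} together with the known classification of genus-two L-space knots in $S^3$, using the dichotomy on the knot exterior recorded above. So let $L\subset\cal{P}$ be a genus-two L-space knot. If the exterior of $L$ is irreducible, Theorem~\ref{thm:lspace} gives $L=\cal{K}$. Otherwise, let $S$ be a sphere in the exterior of $L$ that does not bound a ball there. Since $\cal{P}$ is irreducible, $S$ bounds a ball $B\subset\cal{P}$, and $B$ must contain a tubular neighborhood of $L$ --- for otherwise $B$ would lie in the exterior of $L$, contradicting the choice of $S$ --- so $L\subset B$.

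It then remains to see that a genus-two L-space knot lying in a $3$-ball $B\subset\cal{P}$ is a genus-two L-space knot in $S^3$, after which the $S^3$ classification finishes the proof. Writing $\cal{P}=B\cup_{S^2}W$, the fact that surgery on $L$ is supported in $B$ gives, for every slope $p/q$, a connected-sum decomposition $\cal{P}_{p/q}(L)\cong S^3_{p/q}(L)\,\#\,\cal{P}$, where $S^3_{p/q}(L)$ is the surgery on $L$ regarded as a knot in $S^3$ via $B$. Since $\cal{P}$ is itself an L-space, the K\"unneth formula $\widehat{HF}(Y_1\,\#\,Y_2)\cong\widehat{HF}(Y_1)\otimes\widehat{HF}(Y_2)$ shows that $\cal{P}_{p/q}(L)$ is an L-space if and only if $S^3_{p/q}(L)$ is; and the genus of a knot contained in a ball does not depend on the ambient $3$-manifold. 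Hence $L$, viewed in $S^3$, is a genus-two L-space knot, and since $T(2,5)$ is the unique such knot (see \cite{FRW}), we conclude $L=T(2,5)\subset B\subset\cal{P}$.

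I do not expect a serious obstacle: the substance of the corollary is Theorem~\ref{thm:lspace} and the cited $S^3$ classification, and the only point requiring care is the reducible-exterior case --- concretely, the connected-sum-with-$\cal{P}$ surgery formula and its compatibility with the L-space condition.
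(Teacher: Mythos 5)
Your proof is correct and follows essentially the same route as the paper: reduce to Theorem~\ref{thm:lspace} for irreducible exteriors, and in the reducible case observe that the knot lies in a ball, so that any surgery splits off a $\cal{P}$ connected summand and the L-space condition transfers to the $S^3$ surgery, whence \cite{FRW} identifies the knot as $T(2,5)$. You simply spell out the innermost-sphere and K\"unneth details that the paper leaves implicit.
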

\begin{proof}
Let $K\subset\cal{P}$ be a genus-two L-space knot with reducible exterior. Because $\cal{P}$ is irreducible, we can see that $K\subset B^3\subset\cal{P}$. If $K$ has surgery to an L-space $Y$, we may write $Y=\cal{P}\#Y'$, where $Y'$ is surgery on $K$ thought of as a knot in $B^3\subset S^3$. Since $Y$ is an L-space, $Y'$ must be an L-space as well. In this case, $K$ is isotopic in $B^3\subset S^3$ to a genus-two L-space knot, and the only genus-two L-space knot in $S^3$ is $T(2,5)$, by \cite{FRW}.
\end{proof}

\begin{cor}
Knot Floer homology detects $\cal{K}$ and $T(2,5)\subset B^3\subset\cal{P}$.
\end{cor}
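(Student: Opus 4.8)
The plan is to show that a knot $K\subset\cal{P}$ whose knot Floer homology $\widehat{HFK}(\cal{P},K)$ is isomorphic, as a bigraded group, to $\widehat{HFK}(\cal{P},\cal{K})$ must be isotopic to $\cal{K}$ or to $T(2,5)\subset B^3$. One should first record that these two knots genuinely do share this invariant: both are genus-two L-space knots in the $\bb{Z}$-homology-sphere L-space $\cal{P}$, so in each case $\widehat{HFK}(\cal{P},-)$ is the genus-two ``staircase'' bigraded group, a single $\bb{Z}$ in each Alexander grading $-2,\dots,2$ with Maslov gradings decreasing by one from $d(\cal{P})$ --- for $T(2,5)\subset B^3$ this is just $\widehat{HF}(\cal{P})\otimes\widehat{HFK}(S^3,T(2,5))$. (So the detection is relative to $\cal{P}$: $T(2,5)$ sitting in a ball inside any other $\bb{Z}$-homology-sphere L-space with the same $d$-invariant realizes the same bigraded group, and one cannot recover the ambient manifold from $\widehat{HFK}$ alone.)

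For the characterization I would argue as follows. The top nonzero Alexander grading of the model group is $2$, so by the genus-detection theorem of Ozsv\'ath and Szab\'o the Seifert genus of $K$ is $2$. Next --- the step that needs the most care --- I claim $K$ is itself an L-space knot in $\cal{P}$: the Maslov gradings pin down the generators of $CFK^\infty(\cal{P},K)$ as a graded $\bb{Z}[U,U^{-1}]$-module with one generator in each Alexander grading $-2,\dots,2$, and then the bigrading together with $\partial^2=0$ and the requirement that both the vertical complex $\widehat{CF}(\cal{P})$ and, via the symmetry of knot Floer homology, its horizontal reflection have homology of rank one (as $\cal{P}$ is an L-space) forces the differential on $CFK^\infty(\cal{P},K)$ to be that of the genus-two staircase. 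Given this, the conclusion follows from results already available. If the exterior of $K$ is irreducible, Theorem~\ref{thm:lspace} gives $K=\cal{K}$. If the exterior of $K$ is reducible, then, $\cal{P}$ being irreducible, $K$ lies in a ball $B^3\subset\cal{P}$; arguing exactly as in the preceding corollary, $K$ is then isotopic in $B^3\subset S^3$ to a genus-two L-space knot, hence to $T(2,5)$ by \cite{FRW}, so $K=T(2,5)\subset B^3$.

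The main obstacle is the rigidity step, i.e.\ deducing that $K$ is an L-space knot from the bigraded group together with the fact that the ambient manifold is an L-space. The shape of $\widehat{HFK}$ alone does not force the staircase complex --- there are ``disconnected'' differentials compatible with the bigrading and with $\partial^2=0$ --- so one must also invoke the symmetry of knot Floer homology, which makes the horizontal complex of $CFK^\infty(\cal{P},K)$ have rank-one homology and thereby excludes those possibilities. Once that is in place, the corollary is a direct application of Theorem~\ref{thm:lspace} and of the genus-two L-space knot classification in $S^3$ of \cite{FRW}, paralleling the previous corollary; in particular I do not expect the fiberedness-detection theorem or the monodromy/folding-automata machinery behind Theorem~\ref{thm:lspace} to enter, since here the ambient manifold is fixed and has minimal Heegaard Floer homology.
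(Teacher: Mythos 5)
The paper states this corollary without proof, so there is nothing to compare against line by line; your argument is the standard and evidently intended deduction, mirroring the proof of the analogous detection result for $T(2,5)$ in $S^3$ from \cite{FRW}. Your outline is correct: genus detection gives $g(K)=2$, the bigraded group together with the rank-one vertical and (by symmetry) horizontal homology forces the genus-two staircase for $CFK^\infty(\cal{P},K)$ and hence that $K$ is an L-space knot, after which Theorem~\ref{thm:lspace} handles the irreducible-exterior case and the reducible-exterior case reduces to the $S^3$ classification exactly as in the preceding corollary.
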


The exterior of any knot $K\subset\cal{P}$ is reducible, hyperbolic, Seifert-fibered, or toroidal. The only knots in $\cal{P}$ with Seifert fibered exterior are the three exceptional fibers and the (unique) regular fiber of the standard Seifert fibration of $\cal{P}$. Jacob Caudell \cite{J} has computed the genera of all four of these knots, and $\cal{K}$ is the only one with genus two. And, an Alexander polynomial argument (see subsection \ref{sec:floer}) shows that any genus-two L-space knot with irreducible exterior in $\cal{P}$ has atoroidal exterior. So, to prove Theorem \ref{thm:lspace}, it suffices to classify genus-two hyperbolic L-space knots in $\cal{P}$.

To do that, we will use an argument inspired by \cite{FRW}, which builds on work of \cite{BHS}. The key idea is that the study of hyperbolic L-space knots in genus two is closely related to the study of fixed points of pseudo-Anosov maps on the genus-two surface.

\begin{defn}
Let $K$ be a hyperbolic, fibered knot in a 3-manifold $Y$. The fibration of the exterior of $K$ is described by an open book decomposition $(S,h)$, where $S$ is a compact surface with one boundary component, and $h:S\to S$ is freely isotopic to a pseudo-Anosov map $\psi_h$ on $S$. We say that $K$ is \textit{fixed-point-free} (FPF for short) if $\psi_h$ has no fixed points in the interior of $S$.
\end{defn}

The following theorem will be the main goal of this paper:

\begin{mainthm}\label{thm:FPF}
Let $K$ be a genus-two, hyperbolic, fibered knot in $\cal{P}$. If the fractional Dehn twist coefficient $c(K)\neq0$, then $K$ is not FPF.
\end{mainthm}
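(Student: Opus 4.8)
The plan is to assume for contradiction that $K$ is FPF with $c(K)\neq0$ and to play three constraints on the monodromy against one another: the fixed-point set of its pseudo-Anosov representative $\psi=\psi_h$, which FPF forces out of the interior and $c(K)\neq0$ controls along $\del S$; the action $h_*$ on $H_1$ of the fiber, computed by the Lefschetz fixed point theorem and constrained because $\cal{P}$ is a $\zz$-homology sphere and an L-space; and the next-to-top group $\widehat{HFK}(\cal{P},K,1)$, which the recent work built on in \cite{BHS} ties to that fixed-point data.

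First I would reduce to the case of integral $c(K)$. Write the open book as $(S,h)$ with $S$ the genus-two surface with one boundary component, and let $p$ be the number of prongs of the invariant foliations of $\psi$ along $\del S$. By the Honda--Kazez--Mati\'c description of the fractional Dehn twist coefficient, write $c(K)=n+j/p$ with $n\in\zz$ and $0\le j<p$. If $j\neq0$, then the standard representative of $\psi$ cyclically permutes the $p$ boundary prongs, so $\psi$ has no fixed points on $\del S$; together with FPF this says $\psi$ has no fixed points at all, so the Lefschetz fixed point theorem gives $0=L(\psi)=L(h)=1-\operatorname{tr}(h_*\colon H_1(S)\to H_1(S))$, i.e.\ $\operatorname{tr}(h_*)=1$. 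On the other hand, since $\operatorname{tr}(h_*)$ is, up to sign, the coefficient of $t$ in the symmetrized Alexander polynomial of $K$ and $\cal{P}$ is a homology sphere, one has $\chi(\widehat{HFK}(\cal{P},K,1))=\pm1\neq0$, while the Floer/fixed-point dictionary forces $\widehat{HFK}(\cal{P},K,1)=0$ because $\psi$ has no interior fixed points --- a contradiction. (This case uses nothing about $\cal{P}$.) So from now on $j=0$, $n\neq0$, and the only fixed points of $\psi$ are the $p$ prongs on $\del S$.

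In the integral case, FPF still gives $\widehat{HFK}(\cal{P},K,1)=0$ via the dictionary, hence $\operatorname{tr}(h_*)=0$; as $h_*\in Sp(4,\zz)$ its characteristic polynomial is $t^4+bt^2+1$, so $\Delta_K(t)\doteq t^2+b+t^{-2}$, and $|\det(I-h_*)|=|H_1(\cal{P})|=1$ forces $b\in\{-1,-3\}$. Next I would use that $\cal{P}$ is an L-space: the spectral sequence from $\widehat{HFK}(\cal{P},K)$ to the rank-one group $\widehat{HF}(\cal{P})$, together with fiberedness ($\widehat{HFK}(\cal{P},K,\pm2)\cong\zz$) and the vanishing of $\widehat{HFK}(\cal{P},K,\pm1)$, bounds the total rank of $\widehat{HFK}(\cal{P},K)$ and pins the remaining data down to a short list of pairs $(\Delta_K,p)$. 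Against this I would put the fixed-point computation: $L(\psi)=1-\operatorname{tr}(h_*)=1$ must equal the sum of the local Lefschetz indices of the $p$ boundary prongs of a pseudo-Anosov whose collar is twisted $n\neq0$ times, indices that depend only on $p$ and on $\operatorname{sgn}(n)$ (that is, on whether $h$ is right- or left-veering), while $p$ is itself constrained on the genus-two surface by the Euler--Poincar\'e identity $\sum_{\text{int}}(2-p_i)=p-6$. Matching the prong-index sum against the short list should eliminate every case, and the handful of genuinely exceptional configurations I would rule out directly with the train-track and folding-automaton machinery used for the rank-one classification, enumerating the relevant pseudo-Anosovs and checking that each either has $c=0$ or does not arise from a knot in $\cal{P}$.

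The hard part will be the boundary bookkeeping in the integral case: computing the local Lefschetz index at a fixed boundary prong of a pseudo-Anosov with $n\neq0$ collar twists, and organizing the interplay among that index, the prong number $p$, the veering direction, and the Euler--Poincar\'e constraint tightly enough that only impossible configurations survive --- this is exactly the point where the genus-two hypothesis (through the admissible values of $p$) and, if needed, the automaton enumeration do their work. A secondary subtlety will be pinning down the grading conventions in the Floer/fixed-point dictionary so that ``$\psi$ has no interior fixed points'' yields the exact vanishing $\widehat{HFK}(\cal{P},K,1)=0$ rather than merely a rank inequality, since the whole argument hinges on comparing that group's Euler characteristic with $L(\psi)$.
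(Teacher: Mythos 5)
There is a genuine gap, and it sits at the very first step. Your reduction to the integral case rests on the claim that ``$\psi$ has no interior fixed points $\Rightarrow \widehat{\mathit{HFK}}(\cal{P},K,1)=0$.'' That is not what the fixed-point/Floer dictionary of \cite{BHS}, \cite{GS}, \cite{Ni2} says: $\widehat{\mathit{HFK}}(K,g-1)$ is computed from a perturbation of $\psi_h$ near $\del S$, and when $c(K)$ is nonzero (in particular non-integral) the boundary contributes generators even when $\psi_h$ is FPF. Indeed, for any L-space knot in a homology sphere the relevant Alexander coefficient is $\pm1$ and $\widehat{\mathit{HFK}}(K,1)$ has rank $1$, not $0$; and Theorem \ref{thm:classify} of the paper exhibits explicit FPF genus-two monodromies (lifts of $\Delta^{4k+2}\beta_i$) with non-integral fractional Dehn twist coefficient, which your ``easy case'' would prove cannot exist. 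The same miscalibration poisons your integral case, where you derive $\operatorname{tr}(h_*)=0$ and $\Delta_K\doteq t^2+b+t^{-2}$; the actual constraint for an L-space knot here is $\Delta_K(t)=t^4-t^3+t^2-t+1$. Worse, the integral case is vacuous for $\cal{P}$: since $\cal{P}$ is an L-space, $|c(K)|<1$, so $c(K)\neq0$ forces $c(K)$ to be non-integral --- all of the content lives in exactly the case you dispose of with the faulty lemma.

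The second missing idea is how the hypothesis ``$K\subset\cal{P}$'' actually enters. Non-integrality of $c(K)$ gives at least two boundary prongs, which by \cite{BHS} forces $h$ to commute with the hyperelliptic involution; via Birman--Hilden, $h$ descends to a $5$-braid $\beta$ with $\widehat{\beta}=T(3,5)$ and $|c(\beta)|=2|c(K)|<2$. The paper then runs the Euler--Poincar\'e constraint on the (even) boundary prong number to get five strata, and eliminates each by the train-track/folding-automaton classification, finally killing the surviving candidate braids by checking they do not close up to $T(3,5)$ (determinants, self-linking numbers). Your purely homological use of $\cal{P}$ ($|H_1|=1$, total rank of $\widehat{\mathit{HF}}$) cannot see the difference between $\cal{P}$ and other homology-sphere L-spaces realized by the candidate monodromies, so the Lefschetz-index bookkeeping you propose cannot ``eliminate every case'' --- the surviving braids $\beta_1,\beta_2,\beta_3$ genuinely exist and must be excluded by the branched-cover condition.
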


Theorem \ref{thm:FPF} implies Theorem \ref{thm:lspace} as described in subsection \ref{sec:floer}. The argument uses well-known results from knot Floer homology (as translated by Tange in \cite{Tange} to the setting of $\cal{P}$) and more recent developments by \cite{BHS}, \cite{GS}, and \cite{Ni2} relating the knot Floer homology of a fibered knot to fixed points of its monodromy. With this machinery in place, the proof of Theorem \ref{thm:lspace} from Theorem \ref{thm:FPF} will be fairly quick.

So, most of the paper will be devoted to proving Theorem \ref{thm:FPF}. The proof is split into four subtheorems: Theorems \ref{thm:433}, \ref{thm:6}, \ref{thm:244}, and \ref{thm:2-34}. The first two are very short, following work from \cite{FRW}; the hardest case is Theorem \ref{thm:2-34}. An outline of the complete argument for Theorem \ref{thm:FPF} is provided below, in subsection \ref{sec:outline}. However, it may be helpful to first read the background section (Section \ref{sec:background}) for a review of the geometry of surface maps, or for our conventions.

Before continuing to the outline, we first update the classification of FPF knots in various genus-two strata as initiated in \cite{FRW}. We expect this classification to be helpful for knot Floer homology detection results and botany questions in 3-manifolds other than $S^3$ or $\cal{P}$.

\begin{thm}\label{thm:classify}
Let $K$ be a genus-two, fibered, hyperbolic knot with monodromy $h:S\to S$ and geometric representative $\psi_h$. If $\psi_h$ has no interior fixed points then one of the following is true:
\begin{itemize}
    \item $\psi_h$ has 1-pronged or 6-pronged boundary
    \item $h$ or $h^{-1}$ is conjugate to the lift of $\Delta^{4k+2}\sigma_1^{n+2}\sigma_2\sigma_3\sigma_4\sigma_1\sigma_2\sigma_3\sigma_4^2$ for some $n\geq 0,k\in\bb{Z}$
    \item $h$ or $h^{-1}$ is conjugate to the lift of $\Delta^{4k+2}(\sigma_4\sigma_3)^2(\sigma_2\sigma_1)^{-2}$ for some $k\in\bb{Z}$
    \item $h$ or $h^{-1}$ is conjugate to the lift of $\Delta^{4k+2}\sigma_1^{-3}\sigma_2^{-1}\sigma_3^{-1}\sigma_2(\sigma_3\sigma_4)^2$ for some $k\in\bb{Z}$
    \item $h$ or $h^{-1}$ is conjugate to the lift of $\Delta^{4k+2}(\sigma_4\sigma_3\sigma_1^{-1}\sigma_2^{-1})^2$ for some $k\in\bb{Z}$
\end{itemize}
\end{thm}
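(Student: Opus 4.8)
The plan is to analyze the structure of a genus-two, fibered, hyperbolic knot $K$ via its monodromy $h$ and geometric representative $\psi_h$, using the theory of invariant train tracks and folding automata. First I would recall that $S$ is a genus-two surface with one boundary component, so the mapping class group is naturally identified (up to the hyperelliptic involution) with a quotient of the 5-strand braid group, which is why all the explicit monodromies in the statement are described as lifts of 5-braid words $\Delta^{4k+2}w$ for various words $w$ (here $\Delta$ is the half-twist, and the power $4k+2$ records the central/boundary-twisting part). The assumption that $\psi_h$ has no interior fixed points is the hypothesis that will be exploited combinatorially: by the Bestvina--Handel algorithm, $\psi_h$ carries an invariant train track $\tau$, and the absence of interior fixed points severely constrains the transition matrix $M$ acting on the branches of $\tau$ — in particular it constrains the Lefschetz number and the local indices at singularities, so that Nielsen--Thurston index theory (as in the Bestvina--Handel and Fathi frameworks, and as used in \cite{BHS} and \cite{FRW}) forces the number and valence of prongs at the singular points and at the boundary to lie in a short list.

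The core of the argument is then a finite search. I would set up the folding automaton for genus-two train tracks with a fixed number of interior prongs (the Euler characteristic count $\chi(S) = -3$ with one boundary component pins down, via the Euler--Poincar\'e formula $\sum (1 - p_i/2) + (1 - p_\partial/2) = \chi(S)$, the allowed combinations of prong data, so the boundary must be 1-, 2-, 3-, 4-, or 6-pronged), and in each case enumerate the fixed-point-free transition matrices that are compatible with being pseudo-Anosov (Perron--Frobenius, correct index data, no fixed Nielsen classes in the interior). The first bullet — boundary prong number $1$ or $6$ — is the case we simply set aside; in the remaining cases (boundary $2$-, $3$-, or $4$-pronged), the automaton has finitely many states once we bound the number of branches, and walking the automaton produces finitely many conjugacy classes of monodromies. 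Here I expect to lean directly on the machinery already developed for Theorem \ref{thm:FPF}'s subtheorems (Theorems \ref{thm:433}, \ref{thm:6}, \ref{thm:244}, \ref{thm:2-34}), since those handle exactly the strata of genus-two fibered knots organized by boundary prong data; the classification theorem is in effect the "$S^3$-independent" distillation of those case analyses, recording for each stratum the finite list of FPF monodromies that survive before one imposes the ambient manifold.

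The main obstacle, as flagged in the excerpt, is the $(2,3,4)$ and especially the $(2,-3,4)$ case (Theorem \ref{thm:2-34}): there the folding automaton is large and has cycles that a priori produce infinite families, so one must show that every FPF walk through the automaton either closes up quickly or is forced into one of the two explicit one-parameter families ($\Delta^{4k+2}\sigma_1^{n+2}\sigma_2\sigma_3\sigma_4\sigma_1\sigma_2\sigma_3\sigma_4^2$, which is the genuinely infinite family indexed by $n \geq 0$, arising from a fold that can be iterated) or the sporadic ones. Concretely the steps are: (1) reduce to the atoroidal/pseudo-Anosov case and fix the prong data via index theory; (2) in each of the boundary $2$-, $3$-, $4$-pronged strata, build the relevant piece of the folding automaton and discard transitions incompatible with fixed-point-freeness; (3) identify the cycles in the automaton, show the only one that can be traversed arbitrarily many times yields the $\sigma_1^{n+2}$ family, and read off from the remaining finite walks the three sporadic conjugacy classes $\Delta^{4k+2}(\sigma_4\sigma_3)^2(\sigma_2\sigma_1)^{-2}$, $\Delta^{4k+2}\sigma_1^{-3}\sigma_2^{-1}\sigma_3^{-1}\sigma_2(\sigma_3\sigma_4)^2$, and $\Delta^{4k+2}(\sigma_4\sigma_3\sigma_1^{-1}\sigma_2^{-1})^2$; and (4) account for the overall $\Delta^{4k+2}$ ambiguity — different values of $k$ differ by powers of the boundary Dehn twist, which does not change $\psi_h$ — and for replacing $h$ by $h^{-1}$ (mirroring), which is why the statement is phrased up to inversion. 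I would then remark that, combined with the Floer-theoretic input recalled in subsection \ref{sec:floer}, imposing that the knot live in $\cal{P}$ (hence that the double-branched-cover or surgery constraints hold, forcing $c(K) \neq 0$ in the relevant cases) eliminates all but $\cal{K}$, recovering Theorem \ref{thm:lspace} — but that deduction is carried out elsewhere and is not needed for the present classification.
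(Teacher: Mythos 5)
Your proposal has the right overall architecture --- finitely many strata determined by prong data, a train-track/automaton case analysis in each, with the theorem as the manifold-independent distillation of the subtheorems --- but there are genuine gaps in how you get to and through that case analysis.

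First, your reduction to 5-braid lifts is not justified. You assert that $\Mod(S)$ is ``naturally identified (up to the hyperelliptic involution) with a quotient of the 5-strand braid group,'' but for a genus-two surface with \emph{one boundary component} this is false: only the symmetric mapping classes (those commuting with $\iota$) are Birman--Hilden lifts of braids. The paper's actual argument invokes the Baldwin--Hu--Sivek theorem: once $\psi_h$ has at least two boundary prongs, $h$ commutes with a hyperelliptic involution, hence descends to a 5-braid, hence the boundary prong count is \emph{even}. This single step is also what eliminates the 3- and 5-pronged boundary cases, which you list among the cases to treat but never handle; without it your enumeration does not close up, and the first bullet of the theorem (which only excepts 1- and 6-pronged boundary) would not suffice. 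Second, you never supply the mechanism by which fixed-point-freeness of the \emph{lift} $\psi_h$ is read off from combinatorial data of the \emph{braid}: the paper's analysis runs entirely on jointless standard tracks in the quotient disk (via Theorem \ref{thm:jointless} and the Ham--Song automaton, which reduces each stratum to a single canonical track) and detects interior fixed points upstairs through the trace lemma (Lemma \ref{lem:trace}), i.e., tracelessness of $M(f_h)$ translated into a parity condition on side-swapping edges in $f_\beta$. ``Discard transitions incompatible with fixed-point-freeness'' hides exactly this central content; relatedly, the normalization $\Delta^{4k+2}$ rather than $\Delta^{2k}$ is itself an FPF condition (an odd power of the full twist lifts to $\iota$ up to boundary twists, which is needed to swap the interior singularities in pairs), not merely the boundary-twist ambiguity you describe. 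Finally, your bookkeeping of the strata is off: the infinite family $\sigma_1^{n+2}\sigma_2\sigma_3\sigma_4\sigma_1\sigma_2\sigma_3\sigma_4^2$ comes from $(4;\emptyset;3^2)$ and is imported wholesale from \cite{FRW} (Theorem \ref{thm:433}); the hard stratum $(2;\emptyset;3^4)$ produces exactly the three sporadic braids $\beta_1,\beta_2,\beta_3$ and no infinite family; and $(2;\emptyset;4^2)$ produces nothing at all (Theorem \ref{thm:244}), while $(4;\emptyset;4)$ is killed immediately by its unique fixed interior singularity.
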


After applying \cite{Nifibered}, \cite{Ni2} or \cite{GS}, \cite{Ni3}, and some of the techniques from \cite{FRW}, Theorem \ref{thm:classify} translates to Floer homology as follows:

\begin{cor}\label{cor:botany}
Let $K$ be a null-homologous knot of genus-two in a 3-manifold $Y$. Suppose that the exterior of $K$ is hyperbolic, and denote by $S$ a genus-two Seifert surface for $K$. If $\text{rk}~\widehat{\mathit{HFK}}(K,[S], a)=1$ for $a=1,2$ then $K$ is fibered, and either:
\begin{itemize}
    \item The invariant foliations of its monodromy have 6-pronged boundary, and the dilatation of its monodromy is a root of $\Delta_K(t)=t^4-t^3\pm|H_1(Y)|t^2-t+1$
    \item The monodromy of $K$ is conjugate to one of the maps in Theorem \ref{thm:classify}
\end{itemize}
\end{cor}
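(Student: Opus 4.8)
The plan is to reduce the Floer-theoretic hypothesis to a statement about the monodromy, and then quote Theorem \ref{thm:classify}. First I would establish fiberedness: since $\mathrm{rk}\,\widehat{\mathit{HFK}}(K,[S],2)=1$ and $g(S)=2$ is the Seifert genus, the knot $K$ is fibered by the fiberedness detection theorem of \cite{Nifibered} (together with the fact that $\widehat{\mathit{HFK}}$ in the top Alexander grading detects the genus, by work of Ozsv\'ath--Szab\'o). Thus the exterior of $K$ fibers over the circle with fiber $S$ and some monodromy $h\colon S\to S$; since the exterior is assumed hyperbolic, $h$ is freely isotopic to a pseudo-Anosov $\psi_h$, and we may speak of its invariant foliations and their boundary behavior.

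Next I would translate the hypothesis $\mathrm{rk}\,\widehat{\mathit{HFK}}(K,[S],1)=1$ into a constraint on fixed points of $\psi_h$. This is exactly where the recent machinery relating knot Floer homology in the next-to-top grading to fixed points of the monodromy enters: by \cite{Ni2} (or \cite{GS} together with \cite{Ni3}), the rank of $\widehat{\mathit{HFK}}$ in the next-to-top grading of a fibered knot is controlled by a count of interior fixed points of $\psi_h$, so rank $1$ forces $\psi_h$ to have no interior fixed points. Hence $K$ falls into the hypothesis of Theorem \ref{thm:classify}, and $\psi_h$ either has $1$-pronged or $6$-pronged boundary, or $h$ (up to conjugacy and inversion) is one of the four explicit braid lifts listed there. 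For the $1$-pronged case I would rule it out (or fold it into the list) using the same considerations as in \cite{FRW}: a $1$-pronged boundary singularity would make $K$ one of a small explicitly understood family, which can be checked directly against the genus and rank hypotheses; the surviving possibilities are the $6$-pronged case and the four monodromies, matching the two bullets of the corollary.

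Finally, in the $6$-pronged case I would pin down the Alexander polynomial. Here I would argue as in \cite{FRW}: with $6$-pronged boundary and no interior fixed points, the Euler--Poincar\'e / Lefschetz count for the pseudo-Anosov on the once-punctured genus-two surface is rigid, which forces the characteristic polynomial of the action on $H_1$ — equivalently $\Delta_K(t)$ up to units and the homological normalization — to be $t^4 - t^3 \pm |H_1(Y)|\,t^2 - t + 1$, the sign and the middle coefficient being determined by $|H_1(Y)|$ via the usual surgery/Turaev-torsion normalization of the Alexander polynomial of a knot in $Y$; the dilatation of $\psi_h$ is then a root of this polynomial since the action on the foliations is carried by the homological action in this stratum. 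The main obstacle I expect is the bookkeeping in this last step — correctly normalizing $\Delta_K(t)$ for a null-homologous knot in a general $Y$ (so that the $|H_1(Y)|$ and the sign come out right) and verifying that in the $6$-pronged-boundary stratum the dilatation is genuinely a homological eigenvalue rather than merely bounded by one — together with cleanly disposing of the $1$-pronged boundary case without access to the $S^3$-specific arguments of \cite{FRW}.
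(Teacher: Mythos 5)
The paper does not actually write out a proof of this corollary; it only indicates the route (fiberedness via \cite{Nifibered}, translation of the rank-one hypothesis in grading $g-1$ into fixed-point information via \cite{Ni2} or \cite{GS} and \cite{Ni3}, then Theorem \ref{thm:classify} together with techniques from \cite{FRW}), and your proposal follows exactly that route, including the Lefschetz/orientability argument producing $\Delta_K(t)=t^4-t^3\pm|H_1(Y)|t^2-t+1$ and the dilatation statement in the 6-pronged case. That architecture is correct.

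The genuine gap is your treatment of the 1-pronged boundary case. You propose to dispose of it on the grounds that a 1-pronged boundary would place $K$ in ``a small explicitly understood family, which can be checked directly'' --- but no such family exists: Theorem \ref{thm:classify} leaves the 1-pronged (and 6-pronged) boundary strata entirely unclassified, and the paper explicitly defers even the 6-pronged classification to future work, so there is no list to check against. Nor can you borrow the argument used elsewhere in the paper to exclude 1-pronged boundary ($|c(K)|<1$ together with $c(K)\neq 0$ forces at least two boundary prongs), since the corollary carries no L-space hypothesis. The exclusion has to come from the Floer-theoretic input itself: the fixed-point formula of \cite{Ni2}/\cite{GS} computing $\mathrm{rk}\,\widehat{\mathit{HFK}}(K,[S],g-1)$ contains a boundary term depending on the number of boundary prongs and on the integrality of the fractional Dehn twist coefficient (automatic when the boundary is 1-pronged), and it is this term --- one of the ``techniques from \cite{FRW}'' the paper alludes to --- that is incompatible with total rank one in that stratum. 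Without this step your argument does not produce the dichotomy as stated, since the first bullet of the corollary allows only 6-pronged boundary.
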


The techniques used to prove Theorem \ref{thm:classify} can surely be extended to the case where $\psi_h$ has 6-pronged boundary, and should not pose much greater difficulty than the already completed cases. Completing the classification in the spirit of Theorem \ref{thm:classify} for maps with 6-pronged boundary would yield a complete solution to the botany problem described in Corollary \ref{cor:botany}: a list of all genus-two hyperbolic knots in any 3-manifold, with knot Floer homology of rank 1 in the top two gradings. Incidentally, that work would also lead to a complete classification of pseudo-Anosov maps with a unique fixed point on the closed genus-two surface. But, that is beyond the goal of the current paper; we may return to it in future work. For those interested, we encourage you to see Section 3 of \cite{FRW} for a discussion of the 6-pronged case.

\subsection{Outline of the argument for Theorem \ref{thm:FPF}}\label{sec:outline}
Let $K$ be a genus-two FPF knot in $\cal{P}$ with fractional Dehn twist coefficient $c(K)\neq 0$; our goal will be to find a contradiction to show that such a $K$ cannot exist. Associated to $K$, there is an open book decomposition $(S,h)$ for $\cal{P}$. Here, $S$ is a genus-two surface with one boundary component, and $h:S\to S$ is a homeomorphism fixing $\partial S$ pointwise. Because $K$ is hyperbolic, the monodromy $h:S\to S$ is freely isotopic to a pseudo-Anosov map $\psi_h: S\to S$.

Because $\cal{P}$ is an L-space, we know that $|c(K)|<1$ by \cite{HM} or \cite{OScont}. So, we must have $0<|c(K)|<1$, and hence $c(K)$ is not an integer. It follows that the foliations preserved by $\psi_h$ have at least two prongs on $\partial S$. By work of Baldwin--Hu--Sivek in \cite{BHS}, we may then conclude that $h$ commutes with a hyperelliptic involution $\iota:S\to S$. Taking a quotient by the action of $\iota$, we see that $h$ projects to a 5-braid $\beta$, thought of as a mapping class on the 5-marked disk $D_5$. Equivalently, $h$ is the lift of $\beta$ via the Birman--Hilden correspondence. From this perspective, we can see that $\cal{P}$ is a 2-fold cover over $S^3$ branched along the braid closure $\widehat{\beta}$. In particular, $\widehat{\beta}$ must be the torus knot $T(3,5)$. And, because $|c(K)|<1$, we know $|c(\beta)|<2$.

We can see that $\beta$ is freely isotopic to a pseudo-Anosov map $\psi_\beta:D_5\to D_5$, which lifts to $\psi_h:S\to S$. The invariant foliations of $\psi_\beta$ lift to those of $\psi_h$, and, near $\partial D_5$, the number of prongs of these foliations double in the lift. It follows that $\psi_h$ has an even number of prongs on $\partial S$. Applying the Euler-Poincar\'e formula (see e.g. \cite{FLP}), the possible strata for $\psi_h$ are then: $(6;\emptyset; \emptyset)$, $(4;\emptyset;4)$, $(4;\emptyset;3^2)$, $(2;\emptyset;4^2)$, or $(2;\emptyset;3^4)$. See subsection \ref{sec:pa} for strata naming conventions.

Note that any map in the stratum $(4;\emptyset;4)$ has an interior fixed point given by the unique interior 4-pronged singularity, so $\psi_h$ cannot belong to this stratum. In joint work with Farber and Wang in \cite{FRW}, we classified pseudo-Anosov maps with no interior fixed points in the stratum $(4;\emptyset;3^2)$, and we can check quickly that no such map can specify an open book decomposition for $\cal{P}$; see Theorem \ref{thm:433}. Similarly, we gave strong restrictions for fixed-point-free maps in the stratum $(6;\emptyset;\emptyset)$, which can easily be applied to our setting; see Theorem \ref{thm:6}. We will treat these strata in Section \ref{sec:6433}, and will ignore them for the rest of this outline.

For the remaining two strata, $(2;\emptyset;4^2)$ and $(2;\emptyset;3^4)$, the relevant results are Theorems \ref{thm:244} and \ref{thm:2-34}. The arguments for each are similar, so we will discuss them in parallel here. For these two strata, we will work mostly with the braid $\beta$. We can use the singularity type for $\psi_h$ to determine the singularity type for $\psi_\beta$, by considering how the foliations project in the quotient under the action of $\iota$; the relevant strata for $\psi_\beta$ are $(1;1^5;4)$ and $(1;1^5;3^2)$. In each of these strata, we consider the possible train tracks carrying $\psi_\beta$. Applying Theorem \ref{thm:jointless} and some techniques from the folding automata developed in \cite{KLS}, \cite{HS} and \cite{CH}, we show that it suffices to consider a single train track $\tau$ for each stratum. For a track $\tau$ that carries $\psi_\beta$, there is a train track map $f_\beta:\tau\to\tau$ induced by $\psi_\beta$, from which we can read fixed point information of $\psi_\beta$ and $\psi_h$. The braid $\beta$ which induces $f_\beta$ is unique up to conjugation and powers of full twists.

On the special track $\tau$ in each stratum, we systematically study all train track maps $f_\beta$ which could potentially be induced by a pseudo-Anosov $\psi_\beta$ that lifts to a FPF map $\psi_h:S\to S$. This analysis is mostly combinatorial in nature, but requires some additional geometric input for $(1;1^5;3^2)$. The end result is a list of exactly three candidate train track maps corresponding (up to conjugation and full twists) to distinct braids $\beta_1,\beta_2, \beta_3$. All three of these candidates are in the stratum $(1;1^5;3^2)$, and we show that no braid in the stratum $(1;1^5;4)$ lifts to a FPF map in the cover. We can then use the requirements that $|c(\beta)|<1$ and $\widehat{\beta}=T(3,5)$ to eliminate the three candidate braids. Technically, this last step requires considering the related braids $\Delta^{2k}\beta_i^{\pm1}$ for any $k$, but a short inspection of fractional Dehn twist coefficients facilitates the argument.

\begin{ack}
A special thank you goes to Ethan Farber and Luya Wang--- this project grew from earlier work with them. Thanks also to Jacob Caudell for inspiring me to look at the Poincar\'e Sphere, and for many helpful discussions. Lastly, I'd like to thank my advisor, John Baldwin, for his constant advice, encouragement, and support.
\end{ack}

\section{Background and setup for the proof of Theorem \ref{thm:FPF}}\label{sec:background}
\subsection{Pseudo-Anosov maps, fibered knots, and Floer theory}

In this subsection, we will review the basic setup to understand most of the Outline (Section \ref{sec:outline}), and to prove Theorem \ref{thm:lspace} from Theorem \ref{thm:FPF}. See the next two subsections \ref{sec:tt} and \ref{sec:lift} for a review of the theory of train track maps and how we will use them in this paper.

\begin{figure}[htp]
    \labellist
    \endlabellist
    \centering
    \includegraphics[width=\textwidth]{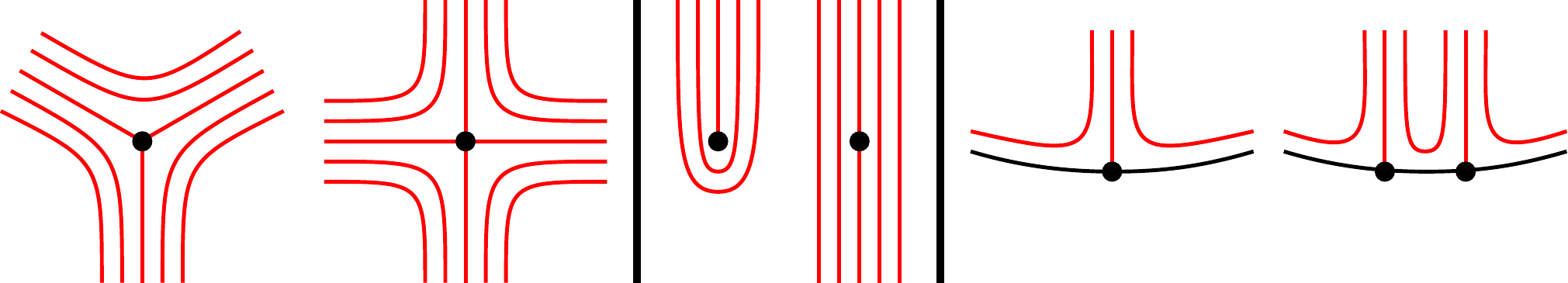}
    \caption{The allowable singularities in the invariant foliations of a pseudo-Anosov map. Non-marked interior singularities must have at least 3 prongs. Left: a 3-pronged and a 4-pronged singularity. Center: a 1-pronged and a 2-pronged singularity at a marked point. Right: a 1-pronged and a 2-pronged boundary component.}
    \label{fig:pAsing}
\end{figure}

\subsubsection{Mapping classes, pseudo-Anosovs, and singularity types}\label{sec:pa}
Let $S$ be a compact surface, possibly with boundary and/or with marked points. The \textit{mapping class group} $\text{Mod}(S)$ is the group of isotopy classes of homeomorphisms $h:S\to S$ which fix $\del S$ pointwise and permute the set of marked points. A \textit{mapping class} is an element of the mapping class group, and an \textit{$n$-braid} is mapping class on the $n$-marked disk.

A map $\psi:S\to S$ is \textit{pseudo-Anosov} if there is a constant $\lambda>1$ and a pair of transverse, measured, singular foliations $(\cal{F}_u,\mu_u)$ and $(\cal{F}_s,\mu_s)$ such that:
\begin{itemize}
    \item $\psi(\cal{F}_u,\mu_u) = (\cal{F}_u,\lambda\mu_u)$
    \item $\psi(\cal{F}_s,\mu_s) = (\cal{F}_s,\lambda^{-1}\mu_s)$
\end{itemize}
and the singularities of $\cal{F}_u$ and $\cal{F}_s$ are as described in Figure \ref{fig:pAsing}. We require that non-marked interior singular points have at least 3 prongs, and that boundary components consist of some number $k\geq 1$ of 1-pronged singularities, which we often think of collectively as a ``$k$-pronged boundary."

Locally, one should think of a pseudo-Anosov map $\psi$ as stretching $S$ in the direction of $\cal{F}_u$ and shrinking $S$ in a transverse direction described by $\cal{F}_s$. The constant $\lambda>1$ is called the \textit{dilatation} of $\psi$, and records how much $\psi$ stretches/shrinks $S$ in each direction. Because the invariant foliations are set-wise preserved by $\psi$, note that $\psi$ always permutes the set of singular points with a given number of prongs.

It will be convenient to refer to the \textit{singularity type} of a pseudo-Anosov map $\psi$, by which we mean the tuple recording the number of prongs at each singularity of $\cal{F}_u$ and $\cal{F}_s$. We will denote the singularity type of $\psi$ by the tuple $(b_1,...,b_r;m_1,...,m_n;k_1,...,k_s)$ where the $i^\text{th}$ boundary component of $\psi$ has $b_i$ prongs; the $i^\text{th}$ marked point has $m_i$ prongs; and the $i^\text{th}$ non-marked interior singularity has $k_i$ prongs. We will use $\emptyset$ if there are no singularities of a certain type, and we will use exponents to denote multiple singularities of the same type with the same number of prongs. For example, the tuple $(3;1^5;\emptyset)$ indicates that $\cal{F}_u$ and $\cal{F}_s$ have a 3-pronged singularity at the unique boundary component of $S$; five 1-pronged singularities at marked points; and no non-marked interior singularities. The tuple $(2^3;2,4;3^2)$ indicates that $\cal{F}_u$ and $\cal{F}_s$ have 2-pronged singularities on each of the three boundary components; a 2-pronged singularity at one marked point, and a 4-pronged singularity at the other; and two 3-pronged singularities at non-marked interior points.

When $S$ has non-empty boundary, pseudo-Anosov maps never fix $\del S$ point-wise, and hence do not represent well-defined mapping classes. Nonetheless, the Nielsen--Thurston classification demonstrates the ubiquity of pseudo-Anosov maps in the study of mapping classes:

\begin{thm}[\cite{Thurston}]\label{thm:NTclass}
Let $S$ be a compact surface, possibly with marked points. Any mapping class $h\in\text{Mod}(S)$ is freely isotopic rel. marked points to a unique homeomorphism $\psi_h:S\to S$ satisfying one of the following:
\begin{itemize}
    \item $\psi_h^n=\text{id}$ for some power $n$.
    \item There is a collection of disjoint simple closed curves $C$ on $S$ for which $\psi_h(C)$ is isotopic to $C$.
    \item $\psi_h$ is pseudo-Anosov
\end{itemize}
\end{thm}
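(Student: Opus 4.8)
The plan is to follow Thurston's original argument, reading the trichotomy off from the dynamics of $h$ on the Thurston compactification of Teichm\"uller space; see \cite{FLP}. I would first dispose of the finitely many low-complexity surfaces for which $\mathcal{T}(S)$ is empty or a single point (spheres with at most three punctures and the like), where the statement is immediate, and assume from now on $\chi(S)<0$; if $S$ has boundary, cap each boundary circle with a once-punctured disk, which turns a homeomorphism fixing $\partial S$ pointwise into one permuting the new punctures and does not affect the Nielsen--Thurston type. Recall from \cite{FLP} that $\mathcal{T}(S)$, embedded via projectivized hyperbolic length functions into $\mathbb{P}(\mathbb{R}^{\mathcal{S}}_{\ge 0})$, with $\mathcal{S}$ the set of isotopy classes of essential simple closed curves, has closure a closed ball $\overline{\mathcal{T}}(S)$ with boundary sphere the space $\mathcal{PML}(S)$ of projective measured laminations, and that the $\Mod(S)$-action extends continuously to this ball. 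Since $h$ acts on a closed ball, Brouwer's fixed point theorem gives a fixed point $x\in\overline{\mathcal{T}}(S)$.

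Next comes a case analysis on the location of $x$. If $x\in\mathcal{T}(S)$, then $h$ has a representative that is a conformal automorphism of the corresponding finite-type Riemann surface $X$; since $\mathrm{Aut}(X)$ is finite for $\chi<0$, the map $h$ is periodic and $\psi_h$ can be taken to be a finite-order isometry of a hyperbolic metric. If $x=[\mathcal{F}]\in\mathcal{PML}(S)$, so that $h_*\mathcal{F}=\lambda\mathcal{F}$ for some $\lambda>0$, I split according to whether $\mathcal{F}$ is filling. If not, then the complement of (a regular neighborhood of) its support contains essential simple closed curves, and $h$-invariance of $[\mathcal{F}]$ singles out a canonical $h$-invariant isotopy class of disjoint curves, so $h$ is reducible; one then applies the theorem inductively to the first-return maps on the cut-open surface. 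If $\mathcal{F}$ is filling, apply Brouwer again to $h^{-1}$ to get a fixed $[\mathcal{G}]$ with $h_*\mathcal{G}=\mu\mathcal{G}$; using the filling hypothesis and intersection-number estimates, one shows $\mathcal{G}$ is also filling, that $\mathcal{F}$ and $\mathcal{G}$ are transverse and jointly fill $S$, and that $\mu=\lambda^{-1}$ with $\lambda\ne1$. The pair $(\mathcal{F},\mathcal{G})$ with their transverse measures then puts a flat (half-translation) structure on $S$ in which $h$ is represented by an affine homeomorphism with linear part $\mathrm{diag}(\lambda^{\pm1},\lambda^{\mp1})$; after relabeling so $\lambda>1$, this is the pseudo-Anosov $\psi_h$, whose invariant foliations have exactly the singularity types coming from the zeros of the flat structure (as in Figure~\ref{fig:pAsing}).

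The main obstacle is twofold. First, the compactification result itself is a substantial piece of machinery --- the length-spectrum embedding, the identification of the frontier with $\mathcal{PML}(S)$, the ball structure --- which I would cite from \cite{FLP} rather than reprove. Second, and more delicate within the argument proper, is the boundary case: showing a filling invariant lamination forces $\lambda\ne1$ and produces an honestly transverse partner $\mathcal{G}$, ruling out the degenerate coincidence $[\mathcal{G}]=[\mathcal{F}]$ (which must be shown to push $h$ back into the periodic case), and then checking the affine representative is a genuine pseudo-Anosov. For uniqueness I would verify the three outcomes are mutually exclusive: periodic maps fix a point of $\mathcal{T}(S)$; a pseudo-Anosov has no invariant multicurve, as such a curve would have zero geometric intersection with its filling invariant lamination; and these facts, with the north--south dynamics of a pseudo-Anosov on $\mathcal{PML}(S)$, pin down $\psi_h$ within the free isotopy class of $h$. (Alternatively, one can run the Bestvina--Handel algorithm on a graph representative of $h$, obtaining either a reduction or an efficient train-track map with Perron--Frobenius transition matrix, i.e. a pseudo-Anosov --- closer in spirit to the train-track methods used later in the paper.)
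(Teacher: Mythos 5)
This is a quoted background theorem --- the Nielsen--Thurston classification, attributed to \cite{Thurston} --- and the paper gives no proof of it, so there is nothing internal to compare your argument against; I can only assess your sketch as an outline of the standard argument from \cite{FLP}. As such it is broadly faithful: compactify Teichm\"uller space, apply Brouwer, and read the trichotomy off the location of the fixed point, with the reducible case handled by cutting and induction and the filling boundary case producing the pseudo-Anosov structure. Two points deserve correction. First, the step ``apply Brouwer again to $h^{-1}$ to get a fixed $[\mathcal{G}]$'' does not do what you want: $h$ and $h^{-1}$ have literally the same fixed-point set on $\overline{\mathcal{T}}(S)$, so this produces no new information and in particular no reason to obtain a second, transverse projective class. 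In Thurston's/FLP's argument the unstable partner of a filling invariant foliation is \emph{constructed} (via Markov partitions, or equivalently via the transpose of a Perron--Frobenius train-track matrix in the Bestvina--Handel route you mention parenthetically), not found by a second fixed-point argument; as written this is a genuine gap in the filling case, though one that the standard references repair. Second, your uniqueness paragraph aims to show the three outcomes are mutually exclusive, but (as the paper itself remarks immediately after the theorem) the periodic and reducible cases can overlap; only the pseudo-Anosov case is disjoint from the other two, and the literal ``uniqueness'' of $\psi_h$ asserted in the statement is really only meaningful in that case --- periodic and reducible representatives are canonical only up to conjugation and further choices. With those repairs your outline matches the cited proof.
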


We call $\psi_h$ the \textit{geometric representative} of $h$, and we say that $\psi_h$ is \textit{periodic} in the first case, and \textit{reducible} in the second. These first two cases are not mutually exclusive, but the pseudo-Anosov case does not overlap with either of them. We will also refer to the mapping class $h\in\text{Mod}(S)$ as periodic, reducible, or pseudo-Anosov according to the Nielsen--Thurston type of $\psi_h$.
\begin{rem}
Note the use of \textit{free} isotopy in the theorem statement: when $S$ has non-empty boundary, there are infinitely many mapping classes with the same geometric representative, which are all freely isotopic but non-isotopic rel. boundary. Two mapping classes have the same geometric representative exactly when they differ by a product of Dehn twists about components of $\del S$.
\end{rem}

\subsubsection{Fibered knots and fractional Dehn twists}

When $\del S$ is connected, and for any mapping class $h\in\text{Mod}(S)$, we may associate to $h$ a \textit{fibered knot} $K$ in a 3-manifold $Y$. First, define $Y\cong S\times [0,1]/\sim$ where the relation $\sim$ is defined by:

\begin{itemize}
    \item $(x,0)\sim (h(x),1)$ for all $x\in S$
    \item $(x,s)\sim (x,t)$ for all $x\in \del S$ and all $s,t\in[0,1]$
\end{itemize}
Then, define $K\subset Y$ to be the image of $\del S$ in the quotient. Note that in this construction, the knot exterior $Y_K$ is homeomorphic to the mapping torus $M_h$. Moreover, any copy of $S$ in the quotient $Y$ is naturally a Seifert surface for $K$, of minimal genus. Starting from a knot instead, we say that a knot $K\subset Y$ is \textit{fibered} if its exterior is the mapping torus of a homeomorphism on a Seifert surface.

The pair $(S,h)$ of surface and mapping class is called an \textit{open book decomposition} of the 3-manifold $Y$, or simply an \textit{open book}. Many properties of the open book $(S,h)$ and its geometric representative $\psi_h$ can be interpreted in terms of the fibered knot $K\subset Y$. For example, Thurston's geometrization of fibered 3-manifolds relates the geometry of the knot exterior $Y_K$ to the Nielsen--Thurston type of $\psi_h$:
\begin{thm}[\cite{ThurstonFibered}]\label{thm:geom}
Let $K\subset Y$ be a fibered knot with associated open book $(S,h)$. Then, the exterior $Y_K\cong M_h$ is:
\begin{itemize}
    \item Seifert-fibered exactly when $\psi_h$ is periodic
    \item Toroidal exactly when $\psi_h$ is reducible
    \item Hyperbolic exactly when $\psi_h$ is pseudo-Anosov
\end{itemize}
\end{thm}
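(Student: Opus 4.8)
The plan is to reduce the statement to Thurston's geometrization of fibered $3$-manifolds. The first point is that $Y_K \cong M_h$ depends only on the \emph{free} isotopy class of the monodromy — mapping tori of freely isotopic homeomorphisms are homeomorphic — so we may work throughout with the geometric representative, that is, $Y_K \cong M_{\psi_h}$. I would then establish the three forward implications separately and recover the ``exactly when'' phrasing from the Nielsen--Thurston trichotomy (Theorem \ref{thm:NTclass}) together with the incompatibility of the three conclusions for an irreducible $3$-manifold with torus boundary: a hyperbolic such manifold is atoroidal and admits no Seifert fibration, so the pseudo-Anosov/hyperbolic correspondence is forced to be exact, and the remaining periodic/reducible versus Seifert-fibered/toroidal dichotomy is matched by the two constructions below.

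For the periodic case, suppose $\psi_h^n = \mathrm{id}$. The $n$-fold cyclic cover of $M_{\psi_h}$ (corresponding to $\pi_1 M_{\psi_h} \to \zz \to \zz/n$) is the mapping torus of $\psi_h^n = \mathrm{id}$, namely $S \times S^1$, and the deck group $\zz/n$ acts by $\psi_h$ on the first factor and by rotation on the second, preserving the product $S^1$-fibration. Pushing this fibration down to $M_{\psi_h}$ exhibits a Seifert fibration, with the orbits meeting the fixed-point set of a proper power of $\psi_h$ becoming the exceptional fibers; equivalently, the suspension flow of $\psi_h$ has all orbits periodic and so defines the Seifert structure directly. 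In particular $Y_K$ is Seifert-fibered.

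For the reducible case, let $C$ be an essential multicurve on $S$ with $\psi_h(C)$ isotopic to $C$; taking $C$ to be a minimal reduction system and adjusting by an isotopy, we may assume $\psi_h(C) = C$. The suspension of $C$ in $M_{\psi_h}$ is a disjoint union of tori and Klein bottles; discarding the Klein bottles and any boundary-parallel components, the fact that each component of $C$ is essential and non-peripheral in $S$ leaves at least one incompressible, non-boundary-parallel torus. Hence $Y_K$ is toroidal.

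The pseudo-Anosov case is the deep input, and is precisely Thurston's hyperbolization theorem for fibered $3$-manifolds \cite{ThurstonFibered}: when $\psi_h$ is pseudo-Anosov, $M_{\psi_h}$ carries a complete finite-volume hyperbolic metric. I would take this as a black box — reproving it, through the double limit theorem and the deformation theory of Kleinian groups, is entirely outside our scope, and is the one genuine obstacle here. All that remains to close the ``exactly when'' statements is the easy converse direction of this case: a pseudo-Anosov map has no invariant essential multicurve and no finite power equal to the identity, so any essential torus in $M_{\psi_h}$, put in standard position with respect to the fibration, could be neither horizontal (the fiber is not a torus) nor vertical (that would descend to an invariant multicurve), and $M_{\psi_h}$ admits no Seifert fibration either; so it is neither toroidal nor Seifert-fibered, and the forward implications then pin down the correspondence exactly.
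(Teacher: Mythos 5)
This theorem is quoted in the paper as background, attributed to \cite{ThurstonFibered}, and is not proved there; the ``proof'' in the paper is the citation itself. Your sketch is the standard argument and matches what that citation encapsulates: the suspension-flow construction for the periodic case, the suspended invariant multicurve for the reducible case, Thurston's hyperbolization of fibered manifolds as the one genuinely deep input, and mutual exclusivity of the three geometric conclusions to upgrade the implications to biconditionals. The only point worth flagging is one the paper itself creates rather than you: periodic and reducible are not mutually exclusive classes (as noted after Theorem \ref{thm:NTclass}), so the literal ``exactly when'' in the first two bullets requires the usual mild reinterpretation; your argument is otherwise sound.
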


\begin{figure}
    \labellist
    \pinlabel $1$ at 11 -2
    \pinlabel $2$ at 52 -5
    \pinlabel $3$ at 94 -2
    \pinlabel $3$ at 188 -2
    \pinlabel $1$ at 229 -5
    \pinlabel $2$ at 270 -2
    \pinlabel $\psi_h$ at 136 50
    \endlabellist
    \centering
    \includegraphics{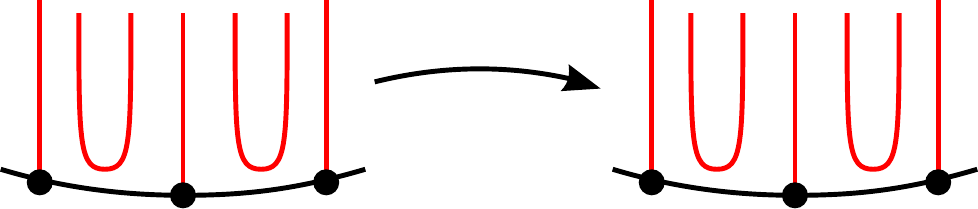}
    \caption{An example where $\psi_h$ acts as a 1/3 rotation on the boundary singular points. The fractional part of $c(h)$ is 1/3.}
    \label{fig:fdtcex}
\end{figure}

One dynamical aspect of $\psi_h$ which is important to the geometry of $K$ is described by the \textit{fractional Dehn twist coefficient} $c(h)$. Roughly, $c(h)$ measures how the geometric representative $\psi_h$ behaves near $\del S$. When $\psi_h$ is pseudo-Anosov, we define $c(h):=n+m/k$, where $h$ acts as $n$ full twists near $\del S$; the invariant foliations of $\psi_h$ have $k$ singular points on $\del S$; and $\psi_h$ acts as a $m/k$ rotation on the cyclically-ordered set of singular points on $\del S$. An example is shown in Figure \ref{fig:fdtcex}.

Note that when the invariant foliations of $\psi_h$ have a single prong on $\del S$, we have $c(h)\in\bb{Z}$, since there is no ``fractional part." Conversely, if $c(h)\in\bb{Z}$ then we can conclude that $\psi_h$ does not rotate the boundary singularity. Here are some well-known properties of fractional Dehn twist coefficients:

\begin{thm}[\cite{ik}\cite{hkm}]\label{thm:fdtc}
Let $S$ be a compact surface, possibly with marked points, and let $h\in\text{Mod}(S)$ be a mapping class.
\begin{itemize}
    \item $c(h)$ is preserved under conjugation within $\text{Mod}(S)$
    \item $c(\text{id})=0$ and $c(D_{\del S}^n\circ h^k)=n+kc(h)$
    \item If $h$ admits a factorization into positive Dehn twists and half twists, then $c(h)\geq 0$. If $h$ is also pseudo-Anosov, then $c(h)>0$.
    \item If $S$ has no marked points and the open book $(S,h)$ supports a tight contact structure, then $c(h)\geq 0$. If $h$ is also pseudo-Anosov, then $c(h)>0$.
\end{itemize}
\end{thm}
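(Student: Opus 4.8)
The plan is to deduce all four bullets from standard properties of the \emph{boundary rotation number} of a surface homeomorphism. Fix a collar $\partial S\times[0,1]\hookrightarrow S$ with $\partial S\times\{0\}=\partial S$. Because $h$ fixes $\partial S$ pointwise, the free isotopy from $h$ to its geometric representative $\psi_h$ restricts on $\partial S$ to a path of homeomorphisms of $\partial S\cong S^1$ that starts at the identity, and lifting this path to the universal cover yields a canonical lift $\widetilde{\psi_h}\colon\mathbb{R}\to\mathbb{R}$ commuting with translation by $1$; one then sets $c(h)=\mathrm{rot}(\widetilde{\psi_h}):=\lim_{j\to\infty}\frac1j\bigl(\widetilde{\psi_h}^{\,j}(x)-x\bigr)$. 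First I would reconcile this with the description in the text: the accumulated integer translation of the boundary isotopy records the ``$n$ full twists,'' and the induced cyclic permutation of the $k$ boundary prongs of $\psi_h$ contributes the ``$m/k$ rotation,'' so $\mathrm{rot}(\widetilde{\psi_h})=n+m/k$ as claimed (when the foliation has one boundary prong, $k=1$, this forces $c(h)\in\mathbb{Z}$).

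With this in hand the first two bullets are formal. Rotation number is conjugation invariant, and replacing $h$ by $ghg^{-1}$ conjugates $\psi_h$ and hence its canonical boundary lift, so $c$ is unchanged; this is the first bullet. For the second, $c(\mathrm{id})=0$ is immediate, $D_{\partial S}$ lifts to $x\mapsto x+1$ so composing with $D_{\partial S}^{\,n}$ adds $n$ to the rotation number, and $\mathrm{rot}$ is homogeneous under powers; combining these gives $c(D_{\partial S}^{\,n}\circ h^k)=n+k\,c(h)$, the only point to check being that the canonical lift associated to $\psi_{h^k}=\psi_h^{\,k}$ is the $k$-th power of the canonical lift of $\psi_h$, which follows from the naturality of the collar framing.

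For the third and fourth bullets I would pass through the dictionary between the FDTC and right-veering monodromies. Each positive Dehn twist and each positive half-twist is right-veering, and the right-veering maps form a monoid under composition, so any product of them is right-veering; the inequality of Honda--Kazez--Matić and Ito--Kawamuro then gives $c(h)\geq 0$ (cf.\ \cite{hkm}, \cite{ik}). Similarly, by the theorem of Honda--Kazez--Matić that a tight contact structure is supported only by right-veering open books, the hypothesis of the fourth bullet also forces $h$ to be right-veering, whence again $c(h)\geq 0$. To promote this to a strict inequality when $\psi_h$ is pseudo-Anosov, suppose $c(h)=0$; from the formula $c(h)=n+m/k$ with $0\leq m<k$ we get $n=m=0$, so $\psi_h$ neither twists nor rotates near $\partial S$ and its canonical boundary lift has a fixed point. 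A local analysis at such a boundary fixed point of a pseudo-Anosov map exhibits a properly embedded arc that is pushed strictly to the left, contradicting right-veering; alternatively one can quote directly that a right-veering pseudo-Anosov mapping class has $c(h)>0$.

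The main obstacle is precisely the geometric input at $\partial S$: identifying the rotation-number definition with the ``$n+m/k$'' picture, and the sharp fact that a pseudo-Anosov right-veering map cannot have vanishing FDTC. Both rest on understanding how the invariant singular foliations meet the boundary collar and how arcs through a boundary fixed point are displaced, which is exactly the content of \cite{ik} and \cite{hkm}. Accordingly, in the write-up I would present Theorem~\ref{thm:fdtc} with those references and only indicate the boundary-collar picture, rather than reproving results from that work.
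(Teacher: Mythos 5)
Your outline is correct, and it matches the paper's treatment: the paper does not prove Theorem~\ref{thm:fdtc} but simply cites \cite{ik} and \cite{hkm}, exactly as you propose to do, and your sketch (rotation number of the canonical boundary lift for the first two bullets, right-veering monoids and the Honda--Kazez--Mati\'c/Ito--Kawamuro inequalities for the last two) is the standard argument found in those references. No discrepancy to report.
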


Given a fibered knot $K$, we can define $c(K):=c(h)$ where $(S,h)$ is the open book associated to $K$. The first property above implies that this definition is well-defined, because isotopy of $K$ amounts to isotopy and/or conjugation of $h$.

\subsubsection{Floer-theoretic input, and Theorem \ref{thm:lspace} from Theorem \ref{thm:FPF}}\label{sec:floer}

Heegaard Floer homology, defined by Ozsv\'ath--Szab\'o in \cite{OSinvt}, is an invariant $\widehat{\mathit{HF}}(Y)$ of a closed, oriented 3-manifold $Y$, and it takes the form of a graded vector space over $\ff=\zz/2\zz$. The vector space decomposes over $\spinc$-structures on $Y$, and satisfies $\chi(\widehat{\mathit{HF}}(Y))=|H_1(Y;\zz)|$ when $Y$ is a rational homology $S^3$. In particular, we have $\dim \widehat{\mathit{HF}}(Y)\geq |H_1(Y;\zz)|$ for any rational homology sphere $Y$. When equality is achieved, we call $Y$ an \textit{L-space}. For some concrete examples, $S^3$, $\cal{P}$, and Lens spaces are all L-spaces.

A knot $K\subset Y$ which admits a non-trivial surgery to an L-space is called an \textit{L-space knot}, and in this case the geometry of $K$ is quite constrained. For example, combining results of \cite{Tange}, \cite{OSknot}, \cite{Nifibered}, and \cite{Ni2}, we have:

\begin{thm}\label{thm:hfk}
Suppose that $Y$ is an integer homology sphere L-space, and let $K\subset Y$ be an L-space knot with irreducible exterior. Then, we have:
\begin{itemize}
    \item Every coefficient of the Alexander polynomial of $K$ is 0 or $\pm1$, the non-zero coefficients alternate in sign, and the top two coefficients are non-zero
    \item $K$ is fibered and the open book decomposition $(S,h)$ for $K$ supports a tight contact structure
    \item If $K$ is hyperbolic, then $\psi_h$ has no interior fixed points
\end{itemize}
\end{thm}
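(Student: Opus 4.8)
The plan is to deduce the three bullets in sequence from the cited literature, each feeding the next; there is no new content here, only a careful matching of hypotheses and conventions across \cite{Tange}, \cite{OSknot}, \cite{Nifibered}, \cite{Ni2} and the fixed-point machinery of \cite{BHS}, \cite{GS}. Write $g = g(K)$; recall the genus-detection theorem ($\widehat{\mathit{HFK}}(K,g)\neq 0$ and $\widehat{\mathit{HFK}}(K,j)=0$ for $j>g$) and that, since $Y$ is an integer homology sphere, the graded Euler characteristic of $\widehat{\mathit{HFK}}(K,\ast)$ recovers $\pm t^c\Delta_K(t)$. For the first bullet I would invoke the staircase structure of $CFK^\infty$ of an L-space knot: by \cite{OSknot} for $Y=S^3$, and by \cite{Tange} transporting the large-surgery formalism to an integer homology sphere L-space (here irreducibility of the exterior is used), $CFK^\infty(K)$ is a staircase complex, so $\widehat{\mathit{HFK}}(K,\ast)$ is supported in a single Maslov grading in each Alexander grading and $\text{rk}\,\widehat{\mathit{HFK}}(K,j)=|a_j|$, where $\Delta_K(t)=\sum_j a_j t^j$. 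Reading off the staircase, $\Delta_K(t)=\sum_{i=0}^{2\ell}(-1)^i t^{n_i}$ for strictly decreasing $n_0=g>n_1>\dots>n_{2\ell}=-g$, which yields all the claims: coefficients in $\{0,\pm1\}$, alternating signs among the non-zero ones, $a_g=+1\neq 0$, and $a_{g-1}\neq 0$ because $n_1=g-1$. (When $g=2$, the only case used later, $n_1=g-1$ is forced purely numerically: $n_1<2$ and $n_1>n_2\geq -1$ leave only $n_1=1$.)

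For the second bullet, the first bullet gives $\text{rk}\,\widehat{\mathit{HFK}}(K,g)=|a_g|=1$, so $K$ is fibered by Ni's fiberedness-detection theorem in the version for null-homologous knots in an arbitrary closed oriented $3$-manifold \cite{Nifibered}, \cite{Ni2} (again using irreducibility). Let $(S,h)$ be the resulting open book. To see that $\xi_{(S,h)}$ is tight, note that an L-space knot in an integer homology sphere L-space has $\tau(K)=g(K)$ (a positive L-space surgery forces this; \cite{Tange} in the non-$S^3$ case), and for a fibered knot this equality implies that the Ozsv\'ath--Szab\'o contact invariant $c(\xi_{(S,h)})$ is non-zero (Hedden, Ozsv\'ath--Szab\'o); a contact structure with non-zero contact invariant cannot be overtwisted, hence is tight.

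For the third bullet, suppose $K$ is hyperbolic. Then $\psi_h$ is pseudo-Anosov by Theorem \ref{thm:geom}, and combining the second bullet with the fourth item of Theorem \ref{thm:fdtc} gives $c(h)>0$. The first bullet gives $\text{rk}\,\widehat{\mathit{HFK}}(K,g-1)=|a_{g-1}|\leq 1$. Now apply the correspondence between the next-to-top knot Floer group of a fibered knot and the fixed-point data of its monodromy, developed in \cite{BHS}, \cite{GS}, \cite{Ni2}: because $c(h)>0$, the boundary $\partial S$ contributes at least $1$ to $\text{rk}\,\widehat{\mathit{HFK}}(K,g-1)$, while each interior fixed point of $\psi_h$ contributes a further non-negative amount, at least $1$ for a regular (hyperbolic) fixed point. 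Thus $\text{rk}\,\widehat{\mathit{HFK}}(K,g-1)\geq 1+\#\{\text{interior fixed points of }\psi_h\}$, and the bound $\leq 1$ forces $\psi_h$ to have no interior fixed points.

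I expect the third bullet to be the step requiring the most care. The three cited sources state the monodromy--fixed-point correspondence in different languages (periodic/symplectic fixed-point Floer homology, sutured Floer homology, and instanton variants), and one must extract precisely the quantitative statement that, once $c(h)>0$, there is a boundary contribution of rank at least $1$ on top of the interior contributions --- this is exactly what upgrades ``next-to-top rank $\leq 1$'' to genuine fixed-point-freeness rather than merely ``at most one interior fixed point.'' The first two bullets are routine once Tange's translation \cite{Tange} of the $S^3$ machinery is in hand.
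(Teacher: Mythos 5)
The paper does not actually prove this theorem; it is quoted as a combination of results from \cite{Tange}, \cite{OSknot}, \cite{Nifibered}, \cite{Ni2} (and, for the third bullet, the fixed-point machinery of \cite{BHS}, \cite{GS}). Your reconstruction follows exactly the intended route, and your treatments of the second and third bullets are sound in outline --- in particular, you correctly isolate the quantitative input needed for the third bullet (a boundary contribution of rank at least $1$ to the next-to-top group once $c(h)>0$, on top of at least $1$ per interior fixed point), which is precisely what \cite{BHS} supply.

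There is, however, a genuine gap in your first bullet: the claim that $a_{g-1}\neq 0$. This does \emph{not} ``read off the staircase.'' The staircase structure of \cite{OSknot} gives $\Delta_K(t)=\sum_{i=0}^{2\ell}(-1)^i t^{n_i}$ with $n_0=g>n_1>\dots>n_{2\ell}=-g$, but nothing in that structure forces $n_1=g-1$. Your fallback numerical argument for $g=2$ is also wrong: you assert $n_2\geq -1$, but when $\ell=1$ one has $n_2=n_{2\ell}=-2$, and symmetry then forces $n_1=0$, i.e.\ $\Delta_K(t)=t^2-1+t^{-2}$. This polynomial satisfies every constraint you derive (coefficients in $\{0,\pm1\}$, alternating signs, symmetric, $\Delta_K(1)=1$), yet has $a_{g-1}=0$; ruling it out is exactly the content of the clause you are trying to prove, and it matters downstream, since the paper's deduction of Theorem~\ref{thm:lspace} pins down $\Delta_K(t)=t^4-t^3+t^2-t+1$ using this clause. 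The correct justification is an additional theorem: either the Hedden--Watson result that the second coefficient of the Alexander polynomial of an L-space knot equals $-1$ (transported to integer homology sphere L-spaces), or, more in the spirit of your own write-up, the Baldwin--Vela-Vick/Ni non-vanishing $\widehat{\mathit{HFK}}(Y,K,[S],g-1)\neq 0$ for fibered knots, which combined with your rank formula $\mathrm{rk}\,\widehat{\mathit{HFK}}(K,j)=|a_j|$ gives $a_{g-1}\neq 0$. Either patch is short, but as written the step is unjustified and the $g=2$ shortcut is false.
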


We can now see how Theorem \ref{thm:lspace} follows quickly from Theorem \ref{thm:FPF}.

\begin{proof}[proof of Theorem \ref{thm:lspace} from Theorem \ref{thm:FPF}]

Let $K\neq\cal{K}$ be a genus-two L-space knot in $\cal{P}$ with irreducible exterior. Our goal is to show that $K$ is a hyperolic, fibered, FPF knot with $c(K)\neq 0$. By Theorem \ref{thm:hfk} we already know that $K$ is fibered, and if $K$ is hyperbolic then $K$ is FPF. Let $(S,h)$ be the open book associated to $K$, as a fibered knot. If $K$ is hyperbolic, then $h\in\text{Mod}(S)$ is a pseudo-Anosov mapping class by Theorem \ref{thm:geom}. We also know that $(S,h)$ supports a tight contact structure by Theorem \ref{thm:hfk}. So, if $K$ is hyperbolic then $c(K)=c(h)>0$ by Theorem \ref{thm:fdtc}.

It thus suffices to show that $K$ is hyperbolic, because that will imply that $K$ is FPF and that $c(K)\neq 0$. Any irreducible knot exterior in $\cal{P}$ is hyperbolic, Seifert fibered, or toroidal. And, $\cal{K}$ is the unique genus-two knot in $\cal{P}$ with Seifert-fibered exterior. So, $K\neq\cal{K}$ has either hyperbolic or toroidal exterior.

Finally, suppose $K$ has toroidal exterior. It follows from Theorem \ref{thm:geom} that the mapping class $h\in\text{Mod}(S)$ is reducible. Thinking of the Alexander polynomial $\Delta_K(t)$ of $K$ as the characteristic polynomial of the action $h_\ast:H_1(S)\to H_1(S)$, we can see that $\Delta_K(t)$ is reducible (because the action of $h_\ast$ fixes a nontrivial subspace corresponding to the fixed mutlicurve of $\psi_h$). However, one can quickly compute using the constraints of Theorem \ref{thm:hfk} that $\Delta_K(t)=t^4-t^3+t^2-t+1$, which is irreducible.
\end{proof}

\subsection{Train tracks, train track maps, and standard tracks}\label{sec:tt}

\begin{figure}[t]
    \centering
    \includegraphics[width=\textwidth]{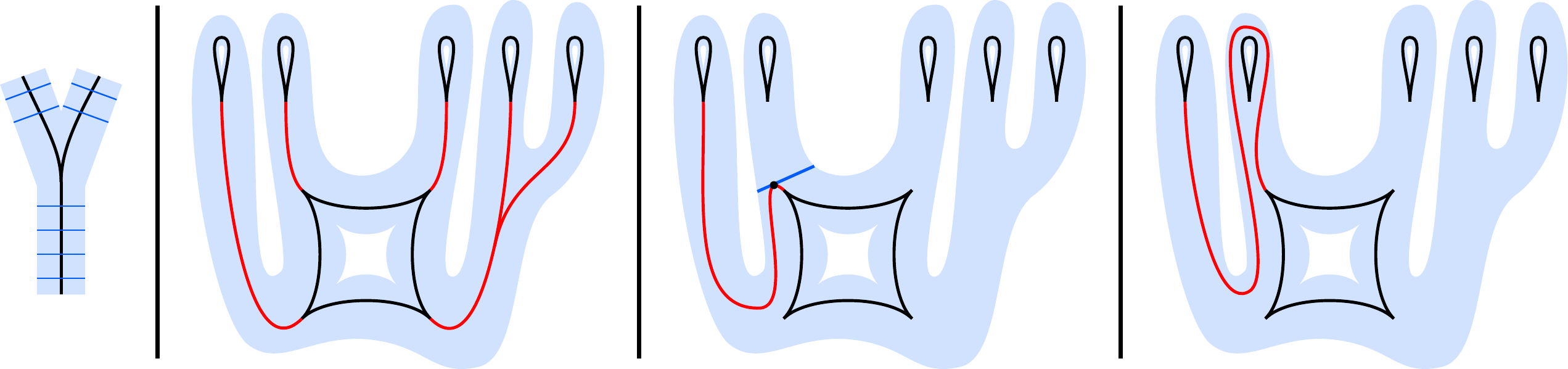}
    \caption{Left: leaves of a fibered neighborhood near a switch. Center left: a fibered neighborhood of a track $\tau$. Center right: the image of the red edge is not transverse to the leaves. Right: the image of the red edge is transverse to the leaves, and ready to be collapsed onto $\tau$.}
    \label{fig:fiberednbd}
\end{figure}

\subsubsection{Train tracks carrying mapping classes}
Our eventual goal will be to classify fixed-point-free pseudo-Anosov maps in certain strata on the genus-two surface with one boundary component. To do that, we will use the theory of train tracks, which may be thought of as a combinatorial perspective on the geometry of mapping classes.

A \textit{train track} $\tau\subset S$ is a branched 1-manifold embedded in the interior of a compact surface $S$. We will refer to the non-manifold points of $\tau$ as \textit{switches}. Thinking of $\tau$ as a graph whose vertices are its switches, there is a natural cyclic ordering on the edges adjacent to any given switch. We say that two edges of $\tau$ form a \textit{cusp} if they are adjacent at a switch; share a common tangent direction; and are adjacent in the natural cyclic ordering.

A \textit{train path} on a train track $\tau$ is a smooth immersion $\gamma:[0,1]\to\tau$ such that $\gamma(0)$ and $\gamma(1)$ are switches. For example, any edge of $\tau$ is a train path. A \textit{train track map} $f:\tau\to\tau$ is a surjection such that for any train path $\gamma:[0,1]\to\tau$, the composition $f\circ\gamma$ is also a train path. Fix an enumeration $e_1,...,e_n$ of the edges of $\tau$; the \textit{transition matrix} $M(f)$ is the matrix whose $(i,j)$ entry is the number of times that the train path $f(e_i)$ passes through $e_j$.

A \textit{fibered neighborhood} $F(\tau)\subset S$ is a tubular neighborhood of $\tau$ which is foliated by intervals transverse to $\tau$, away from the switches. The intervals are called \textit{leaves}. A geometric map $\psi_h:S\to S$ is \textit{carried by} $\tau$ if $\psi_h(\tau)$ can be isotoped into $F(\tau)$ so that the image is everywhere transverse to the leaves of the foliation. We say that a mapping class $h\in\text{Mod}(S)$ is carried by $\tau$ if its geometric representative is. See Figure \ref{fig:fiberednbd} for some local pictures, and then look at Figure \ref{fig:4tracklift} for an image of the entirety of $\psi_h(\tau)\subset F(\tau)$.

When a mapping class $h$ is carried by $\tau$, it induces a train track map $f_h:\tau\to\tau$, determined by the image $\psi_h(\tau)$ under the geometric representative. For an edge $e$ of $\tau$, we define $f_h(e)$ to be the train path that the image $\psi_h(e)\subset F(\tau)$ collapses onto under the natural deformation retraction $F(\tau)\to\tau$. The induced train track map $f_h$ completely determines $\psi_h$:

\begin{thm}[\cite{BH}]\label{thm:ttMCG}
Let $h,g\in\text{Mod}(S)$ be two mapping classes with geometric representatives $\psi_h,\psi_g$. Suppose that $h$ and $g$ are carried by the same train track $\tau$, and induce maps $f_h,f_g:\tau\to\tau$. If $f_h=f_g$, then $h$ and $g$ are freely isotopic and $\psi_h=\psi_g$.
\end{thm}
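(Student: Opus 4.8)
The plan is to show that the common train track map $f_h = f_g$ reconstructs, up to free isotopy, the geometric representative of the mapping class carrying it; since free isotopy classes of pseudo-Anosov (or periodic/reducible) geometric representatives are unique by the Nielsen--Thurston classification (Theorem \ref{thm:NTclass}), this forces $\psi_h = \psi_g$. The key point is that all the data of $\psi_h$ near $\tau$ is recorded by how $\psi_h(\tau)$ sits inside the fibered neighborhood $F(\tau)$, and the combinatorial map $f_h$ together with the embedding $\tau \subset S$ determines that picture.

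First I would recall that, because $h$ is carried by $\tau$, the image $\psi_h(\tau)$ can be isotoped into $F(\tau)$ transverse to the leaves, and the deformation retraction $F(\tau) \to \tau$ sends it to the union of the train paths $f_h(e)$ over edges $e$ of $\tau$. The combinatorial datum $f_h$ therefore specifies, edge by edge, the \emph{homotopy class rel endpoints in $F(\tau)$} of each arc $\psi_h(e)$: since $F(\tau)$ deformation retracts to $\tau$, a properly immersed transverse arc in $F(\tau)$ is determined up to isotopy by the train path it collapses to, plus the (cyclically ordered) way it threads the leaves at each switch --- and transversality to the leaves together with the switch conditions on a train track pins down that threading uniquely. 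Hence $f_h = f_g$ implies $\psi_h(\tau)$ and $\psi_g(\tau)$ are isotopic as subsets of $F(\tau)$, so there is an ambient isotopy of $S$, supported near $F(\tau)$, carrying one to the other; after composing $\psi_g$ with this isotopy we may assume $\psi_h|_\tau = \psi_g|_\tau$ up to isotopy of the image.

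Next I would promote agreement on $\tau$ to agreement on all of $S$. Here I would invoke the standard fact that a train track $\tau$ carrying a pseudo-Anosov map is \emph{filling}: the components of $S \setminus \tau$ are disks (with at most one puncture/marked point) or boundary-parallel annuli with a controlled number of cusps, as dictated by the singularity type. A homeomorphism of $S$ is determined up to isotopy rel $\tau$ by its restriction to $\tau$ once one knows it respects the complementary regions --- and both $\psi_h$ and $\psi_g$ must permute the complementary regions compatibly with $f_h = f_g$, since the cusps of those regions are read off from the cyclic edge-orderings that $f_h$ preserves. By the Alexander method (uniqueness of homeomorphisms of a disk/once-punctured disk fixing the boundary, up to isotopy), $\psi_h$ and $\psi_g$ agree up to isotopy on each complementary region, hence on all of $S$. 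This shows $h$ and $g$ have the same geometric representative, and since they are both carried by $\tau$ with the same induced map they are freely isotopic.

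The main obstacle is the step asserting that the combinatorial map $f_h$ recovers the isotopy class of $\psi_h(\tau)$ \emph{inside} $F(\tau)$, not merely its collapse onto $\tau$: a priori an immersed transverse arc could wind around $F(\tau)$ in ways invisible after collapsing. The resolution is exactly that transversality to the leaves forbids such winding --- a leaf-transverse arc is monotone across each "rectangle" of the fibered neighborhood --- so the arc is the unique leaf-transverse representative of its collapsed train path with the prescribed switch behavior. Making this precise (and checking it is compatible at switches, where the leaf structure degenerates) is the technical heart; everything after it is the Alexander method applied region by region. This is essentially the content of Bestvina--Handel \cite{BH}, so in the write-up I would state the reduction carefully and cite \cite{BH} for the fibered-neighborhood reconstruction.
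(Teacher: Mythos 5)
The paper does not actually prove this statement: it is quoted as background from Bestvina--Handel \cite{BH}, so there is no in-paper argument to compare yours against. Your outline is a correct reconstruction of the standard carrying argument, and it correctly isolates the crux: that a tie-transverse embedded copy of $\tau$ in $F(\tau)$ is determined up to isotopy by the assignment of train paths to branches (i.e., by $f_h$), after which the Alexander method on the complementary regions finishes the job. Two small points to tighten in a real write-up. First, the extension step silently uses that $\tau$ fills $S$; this is automatic for tracks carrying pseudo-Anosov maps (which is the only case the paper uses, and the only case where \cite{BH} asserts the result), but the theorem as literally stated for arbitrary mapping classes would fail for non-filling tracks, since one could compose with a twist supported in a non-disk complementary region without changing $f_h$. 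Second, in the peripheral annulus components of $S\setminus\tau$ the Alexander method only determines $\psi_g^{-1}\psi_h$ up to powers of the boundary Dehn twist; this is precisely why the conclusion is ``freely isotopic'' rather than ``isotopic rel $\partial S$,'' and the final equality $\psi_h=\psi_g$ then comes from the uniqueness of the geometric representative in a free isotopy class (Theorem \ref{thm:NTclass}), exactly as you invoke at the outset. With those caveats made explicit, your argument is the intended one.
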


\begin{cor}
Let $\alpha,\beta\in\text{Mod}(D_n)$ be two $n$-braids which are carried by the same track $\tau$ and induce the same train track map $f_\alpha=f_\beta:\tau\to\tau$. Then, we have $\psi_\alpha=\psi_\beta$ and $\alpha=\Delta^{2k}\beta$ for some $k\in\bb{Z}$, where $\Delta^2=(\sigma_1...\sigma_{n-1})^n$ is a full twist about $\del D_n$.
\end{cor}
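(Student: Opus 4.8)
The plan is to obtain both conclusions by specializing Theorem~\ref{thm:ttMCG} and the Remark following Theorem~\ref{thm:NTclass} to the $n$-marked disk $S = D_n$. By definition an $n$-braid is an element of $\Mod(D_n)$, and $D_n$ is a compact surface with one boundary circle and $n$ marked points, so both statements apply without modification. Since $\alpha$ and $\beta$ are carried by the same track $\tau$ and induce $f_\alpha = f_\beta$, Theorem~\ref{thm:ttMCG} gives at once that $\alpha$ and $\beta$ are freely isotopic, with $\psi_\alpha = \psi_\beta$; this is the first assertion.

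For the second assertion, I would apply the Remark that two mapping classes of $S$ have the same geometric representative exactly when they differ by a product of Dehn twists about the components of $\del S$. As $\del D_n$ is a single circle, this group of ``corrections'' is infinite cyclic, generated by the Dehn twist $D_{\del D_n}$ about a boundary-parallel curve. Hence $\psi_\alpha = \psi_\beta$ forces $\alpha = D_{\del D_n}^{\,k}\,\beta$ for some $k\in\bb{Z}$.

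It then remains to identify $D_{\del D_n}$ with the full twist $\Delta^2 = (\sigma_1\cdots\sigma_{n-1})^n$ under the standard isomorphism $\Mod(D_n)\cong B_n$. This is the only input not already contained in the cited results: it is the classical fact that the center of $B_n$ is generated by $\Delta^2$, together with the observation that $\Delta^2$ is represented geometrically by the Dehn twist about a curve parallel to $\del D_n$. Granting this, $D_{\del D_n}^{\,k} = \Delta^{2k}$, and substituting into the previous paragraph yields $\alpha = \Delta^{2k}\beta$, as claimed.

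I do not anticipate a serious obstacle here. The only point requiring care is an orientation convention --- namely that a \emph{positive} power of $D_{\del D_n}$ corresponds to $\Delta^{+2}$ rather than $\Delta^{-2}$ --- but this is pinned down once a convention for $\sigma_i$ is chosen, and in any case it is immaterial to the statement since $k$ ranges over all of $\bb{Z}$.
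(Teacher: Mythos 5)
Your proof is correct and is exactly the argument the paper intends: the corollary is stated without proof precisely because it follows from Theorem~\ref{thm:ttMCG} together with the Remark after Theorem~\ref{thm:NTclass} and the classical identification of the full twist $\Delta^2$ with the Dehn twist about $\del D_n$. No gaps.
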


Given a mapping class $h\in\text{Mod}(S)$, we define the \textit{geometric data} of $h$ to be a tuple $(h, \psi_h,\tau,f_h)$ where $\psi_h$ is the geometric representative of $h$; $\tau$ carries $h$; and $f_h:\tau\to\tau$ is the induced train track map. It should be noted that any mapping class $h$ is always carried by many different train tracks. But, if $\tau$ is a train track which carries $h$, then the induced train track map $f_h$ is well-defined.

Train tracks which carry mapping classes, and train track maps which are induced by mapping classes, are somewhat special. There are many train tracks which do not carry any mapping classes. And, on any given train track, it is easy to produce train track maps which are not induced by any mapping class. Indeed, the geometry of a train track is determined by the geometry of. For example, when the mapping class is pseudo-Anosov, we have:

\begin{prop}[\cite{BH}]
Let $h$ be a pseudo-Anosov mapping class carried by $\tau$. If $N$ is a connected interior (resp. peripheral) component of $S\setminus\tau$, then $\tau$ has $p$ cusps along $\del N$ if and only if $\psi_h$ has a $p$-pronged singularity in the interior of $N$ (resp. a $p$-pronged boundary in $N$).
\end{prop}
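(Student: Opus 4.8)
The plan is to prove the equivalence by identifying the complementary regions of $\tau$ with the singularities of the unstable foliation $\cal{F}_u$ of $\psi_h$. The first step, which is where the hypotheses that $\psi_h$ is pseudo-Anosov and carried by $\tau$ are used, is that $\cal{F}_u$ is itself carried by $\tau$. Since $\psi_h$ is pseudo-Anosov, the transition matrix $M(f_h)$ is Perron--Frobenius, and its entrywise positive leading eigenvector equips $\tau$ with a transverse measure whose associated measured foliation is $\psi_h$-invariant with dilatation the leading eigenvalue; by uniqueness of the unstable foliation this foliation is $\cal{F}_u$, and by construction it sits inside the fibered neighborhood $F(\tau)$, transverse to the leaves and traversing every branch of $\tau$. (Alternatively, since $\tau$ carries $h^n$ for all $n$ and $\cal{F}_u$ is the attracting fixed point of $\psi_h$ acting on projective measured foliations, $\cal{F}_u$ lies in the closed set of foliations carried by $\tau$.) This much is part of the Bestvina--Handel picture.

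Granting this, every singularity of $\cal{F}_u$ lies in $S\setminus F(\tau)$, whose components coincide---via the deformation retraction $F(\tau)\to\tau$---with the components of $S\setminus\tau$, and the interior versus peripheral dichotomy is preserved by this correspondence. I would then fix one component $N$ and run the local bookkeeping: traversing $\partial N$, the boundary alternates between arcs running parallel to branches of $\tau$, along which the leaves of $\cal{F}_u$ enter $N$ transversally, and cusps, say $p$ of them. Completing $\cal{F}_u$ over $N$, one separatrix emanates from each cusp, and the Euler--Poincar\'e (index) count for a foliated disk with $p$ boundary cusps forces exactly one interior singularity, necessarily $p$-pronged; when $N$ is peripheral the same count instead produces a collar of $\partial S$ carrying a $p$-pronged boundary. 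Note that the degenerate values ($p=0$, or $p\le 2$ at an unmarked interior region) are exactly those incompatible with $\psi_h$ being pseudo-Anosov, consistent with the hypothesis. Running this over all components of $S\setminus\tau$ yields both directions of the equivalence.

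I expect the main obstacle to be the first step---pinning down precisely that $\cal{F}_u$ is carried by $\tau$ with full support---together with the check that each cusp of $\partial N$ accounts for exactly one separatrix, so that the cusp count of $\tau$ along $\partial N$ and the prong count of the corresponding singularity agree on the nose rather than up to a correction. Both facts are standard in the train-track literature and go back to \cite{BH}; what remains is the local normal form for measured singular foliations near a complementary region.
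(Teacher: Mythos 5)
The paper offers no proof of this proposition---it is quoted directly from \cite{BH}---so I am measuring your argument against the standard one, and your overall route is indeed the standard route: show that the unstable foliation $\cal{F}_u$ is carried by $\tau$ with strictly positive weights (via the Perron--Frobenius eigendata of the transition matrix), so that its singularities live in the complementary regions of $\tau$, and then match cusps to prongs region by region. Two local points need repair before this is a proof.

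First, your picture of $\cal{F}_u$ near $\partial N$ is backwards: because the leaves are transverse to the ties of $F(\tau)$, they run parallel to the branches of $\tau$, so the smooth arcs of $\partial N$ are contained in leaves and the only leaves entering $N$ are the separatrices through the cusps; leaves do not cross the smooth arcs transversally. Second, and more seriously, the Euler--Poincar\'e count on a single region does not ``force exactly one interior singularity.'' For a disk $N$ with $p$ boundary cusps the index identity reads $\sum_i(1-k_i/2)=1-p/2$, which is equally satisfied by, say, two $3$-pronged singularities when $p=4$. To get uniqueness you must add that the invariant foliations of a pseudo-Anosov admit no saddle connections (a compact leaf segment joining singularities would have its transverse or tangential measure contracted to zero under iteration of $\psi_h^{\mp 1}$), so every separatrix in $N$ exits through a cusp; counting separatrices against the $p$ cusps together with the index identity then pins down a single $p$-pronged singularity. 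Alternatively---and this is closer to what \cite{BH} actually does---construct the foliation from the weighted track by foliating $F(\tau)$ transverse to the ties and collapsing each $p$-cusped complementary disk to a point (resp.\ each peripheral region onto a collar of $\partial S$), which manufactures exactly one $p$-pronged singularity (resp.\ a $p$-pronged boundary) per region by fiat, and then identify the result with $\cal{F}_u$ by uniqueness of the expanding invariant foliation. With either repair your argument goes through, and both directions of the equivalence follow from the resulting bijection between regions and singularities.
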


For example, the train track in Figure \ref{fig:fiberednbd} carries maps in the stratum $(1;1^5;4)$: the peripheral region has 1 cusp; each marked point lies in a monogon region; and there is a unique non-marked interior region, which has 4 cusps. We will often refer to a train track as belonging to a certain stratum, by which we mean it carries maps in that stratum.

When a train track map is induced by a mapping class, much of the interesting geometric information of the mapping class can be read from the track track map. For example:

\begin{prop}[\cite{BH}\cite{Los}\cite{CC}]\label{prop:fix}
Let $(h,\psi_h,\tau,f_h)$ be the geometric data of a mapping class. Then, the transition matrix $M(f_h)$ is Peron--Frobenius iff the mapping class is pseudo-Anosov.
In this case, the number of fixed points of $\psi_h$ is bounded in terms of the trace of $M(f_h)$: $$\frac{1}{2}\text{tr}(M(f_h))\leq|\text{Fix}(\psi_h)|$$
\end{prop}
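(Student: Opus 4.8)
This statement bundles two classical facts, from \cite{BH}, \cite{Los}, and \cite{CC}, so the plan is to treat them in turn.

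For the equivalence, recall that since $\tau$ carries $h$, the image $\psi_h(\tau)$ sits in the fibered neighborhood $F(\tau)$ transverse to the leaves, so $f_h$ maps edge-paths to edge-paths without backtracking; consequently the transition matrix is multiplicative, $M(f_h^n)=M(f_h)^n$. If $\psi_h$ is pseudo-Anosov with dilatation $\lambda$ and unstable foliation $(\cal{F}_u,\mu_u)$, I would restrict the transverse measure $\mu_u$ to $\tau$ (along which $\tau$ is essentially tangent to $\cal{F}_u$) to obtain a strictly positive weight vector $w=(w_1,\dots,w_n)$ on the edges. Since collapsing $F(\tau)\to\tau$ preserves $\mu_u$-length and $\psi_h(\cal{F}_u,\mu_u)=(\cal{F}_u,\lambda\mu_u)$, comparing the $\mu_u$-length of $\psi_h(e_i)$ with the edge-path $f_h(e_i)$ gives $M(f_h)\,w=\lambda w$, so $M(f_h)$ has a positive eigenvector; minimality of $\cal{F}_u$ forces every edge to eventually traverse every other under iteration of $f_h$, which is primitivity of $M(f_h)$. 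Conversely, given that $M(f_h)$ is Perron--Frobenius with eigenvalue $\lambda$ and positive eigenvector $w$, I would thicken $\tau$ to $F(\tau)$ with band widths prescribed by $w$ and collapse transversely to build a measured foliation $\cal{F}_u$ on $S$ (filling the complementary polygons, whose shapes are dictated by the cusps of $\tau$ and contain no disallowed regions since $\tau$ carries a genuine map); the relation $M(f_h)\,w=\lambda w$ yields $\psi_h(\cal{F}_u)=\lambda\cal{F}_u$, the same construction on the reversed track produces a transverse $\cal{F}_s$ scaled by $\lambda^{-1}$, and $\lambda>1$ because a primitive integral matrix arising from a track with non-trivial complementary regions cannot have spectral radius $1$. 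This realizes $\psi_h$ as pseudo-Anosov.

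For the inequality, the starting point is that the diagonal entry $M(f_h)_{ii}$ counts the number of times the train path $f_h(e_i)$ runs across the edge $e_i$. Because $\psi_h$ is carried by $\tau$, the collapse $c\colon F(\tau)\to\tau$ is $\psi_h$-equivariant up to isotopy and $f_h=c\circ\psi_h|_\tau$; a fixed point of $f_h$ on $\tau$ thus means $\psi_h(x)$ lies on the leaf of $F(\tau)$ through $x$, and since those leaves run in the contracting (stable) direction of $\psi_h$, a short contraction-mapping argument along that leaf promotes such a point to an honest fixed point of $\psi_h$ inside $F(\tau)$. It then suffices to bound the number of fixed points of the graph map $f_h$ on $\tau$ below by $\tfrac12\text{tr}(M(f_h))$: each crossing of $e_i$ inside $f_h(e_i)$ carries a sign according to whether $f_h$ traverses $e_i$ with or against its orientation, and an intermediate-value argument on the corresponding subarc of $e_i$ produces a fixed point whenever that crossing "re-enters" $e_i$ — which fails only for at most one crossing per maximal run of equally oriented crossings. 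Pairing crossings this way yields at least $\lceil M(f_h)_{ii}/2\rceil$ fixed points attached to $e_i$; summing over $i$, and using the Lefschetz/Nielsen bookkeeping of \cite{Los} and \cite{CC} to see that distinct crossings give distinct fixed points of $\psi_h$ (with the finitely many fixed points in complementary regions only helping), gives $\tfrac12\text{tr}(M(f_h))\le|\text{Fix}(\psi_h)|$.

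The main obstacle is exactly this last bookkeeping: controlling how fixed points of $f_h$ on the graph $\tau$ correspond to fixed points of $\psi_h$ on the surface. The genuine subtleties are that a single $\psi_h$-fixed point could a priori appear on several edges of $\tau$ at once, that consecutive same-orientation traversals of an edge really do fail to create new interior fixed points (so the constant $\tfrac12$ cannot be improved in general), and that fixed points may slide to switches or migrate into the complementary polygons. Making these precise is the content of the fixed-point counts in \cite{Los} and \cite{CC}, and I would invoke those rather than reproduce the index computations.
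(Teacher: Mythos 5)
The paper offers no proof of this proposition at all --- it is quoted as a known result from \cite{BH}, \cite{Los}, and \cite{CC} --- so there is nothing internal to compare your argument against; I can only measure it against the standard proofs in those references. Your sketch of the equivalence is the right one (measure on $\tau$ induced by $\mu_u$ gives a positive eigenvector; conversely, thicken $\tau$ with band widths given by the Perron eigenvector and collapse to build the foliations), with the usual caveats that primitivity needs the track to fill and that one must check the complementary regions produce only allowable singularities --- both of which you flag and which are exactly what \cite{BH} verifies. (One imprecision inherited from the paper's statement rather than introduced by you: with the infinitesimal edges included, the transition matrix of a pseudo-Anosov's track map is never primitive, since infinitesimal edges map to infinitesimal edges; the matrix meant here is the one restricted to real edges.)

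The one genuine misstep is your mechanism for the factor $\tfrac12$ in the trace inequality. If $f_h(e_i)$ fully traverses a subarc $J\subset e_i$ onto all of $e_i$, then the intermediate value theorem produces a fixed point in $J$ \emph{regardless} of whether the traversal preserves or reverses orientation (in both cases $f_h$ maps the endpoints of $J$ to the endpoints of $e_i\supseteq J$, so $f_h(x)-x$ changes sign). So your claim that IVT ``fails for at most one crossing per maximal run of equally oriented crossings'' is not correct, and taken literally your argument would give $M(f_h)_{ii}$ fixed points per edge, not $\lceil M(f_h)_{ii}/2\rceil$. The actual source of the $\tfrac12$ is the passage from the graph to the surface: the diagonal entries count components of $\psi_h(R_i)\cap R_i$ for the Markov rectangles $R_i$ of $F(\tau)$, each such component contains exactly one fixed point of $\psi_h$ by hyperbolicity (expansion along the unstable direction, contraction along the stable), but a single fixed point of $\psi_h$ lying on the common stable or unstable boundary of two rectangles is counted by two diagonal crossings, whence $\mathrm{tr}\,M(f_h)\le 2\,|\mathrm{Fix}(\psi_h)|$. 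You do name this double-counting as a ``subtlety'' and defer it to \cite{Los} and \cite{CC}, so the conclusion is recoverable, but the orientation-run heuristic should be deleted: it is both unnecessary and false.
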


\subsubsection{Real and infinitesimal edges, and the Bestvina--Handel construction}
Given a geometric map $\psi_h$, Bestvina--Handel in \cite{BH} construct a train track $\tau$ which carries $\psi_h$. The key object of the Bestvina--Handel construction is an \textit{efficient fibered surface} associated to $\psi_h$. The fibered surface $F\subset S$ is a deformation retract of $S$ which is decomposed into \textit{strips} and \textit{junctions} determined by $\psi_h$. Roughly, the junctions are disks which record the periodic pieces of $S$ under the action of $\psi_h$, and the strips are rectangles foliated by lines parallel to the junctions.

Given a fibered surface $F$ for a geometric $\psi_h$, we obtain a graph $G$ by collapsing each junction to a vertex and each strip to an edge. Bestvina--Handel explain how to insert \textit{infinitesimal} edges into each junction according to the geometry of $\psi_h$, so that collapsing strips to edges and junctions to their infinitesimal edges forms a train track $\tau$ which carries $\psi_h$. The edges of $\tau$ formed by strips are called $\textit{real}$ edges.

The structure of the real and infinitesimal edges of $\tau$ is crucial to the fidelity of Bestvina--Handel's construction. As the infinitesimal edges were inserted into periodic pieces of the fibered surface, it follows that $\psi_h$ sends infinitesimal edges to infinitesimal edges. In particular, the train track map $f_h:\tau\to\tau$ induced by $\psi_h$ sends infinitesimal edges to infinitesimal edges.

Moreover, the cusp structure of the infinitesimal edges in a given junction is preserved under $\psi_h$. An \textit{infinitesimal polygon} is a connected component of $S\setminus\tau$ whose boundary is a union of infinitesimal edges. By the Bestvina--Handel construction, $\psi_h$ and $f_h$ send infinitesimal polygons of $\tau$ to other infinitesimal polygons with the same number of cusps.

For the rest of the paper, we will restrict our attention to train tracks which carry pseudo-Anosov maps. And, we will implicitly presuppose the structure of real and infinitesimal edges of $\tau$, as imposed by an arbitrary map carried by $\tau$. The infinitesimal edges will always be drawn in black, and the real edges in color. For example, in the tracks in Figures \ref{fig:fiberednbd} and \ref{fig:jointstandard}, the real edges are shown in red and the infinitesimal edges in black.

\subsubsection{Standard tracks, jointless tracks, and a carrying theorem}

\begin{figure}
    \centering
    \includegraphics[width=\textwidth]{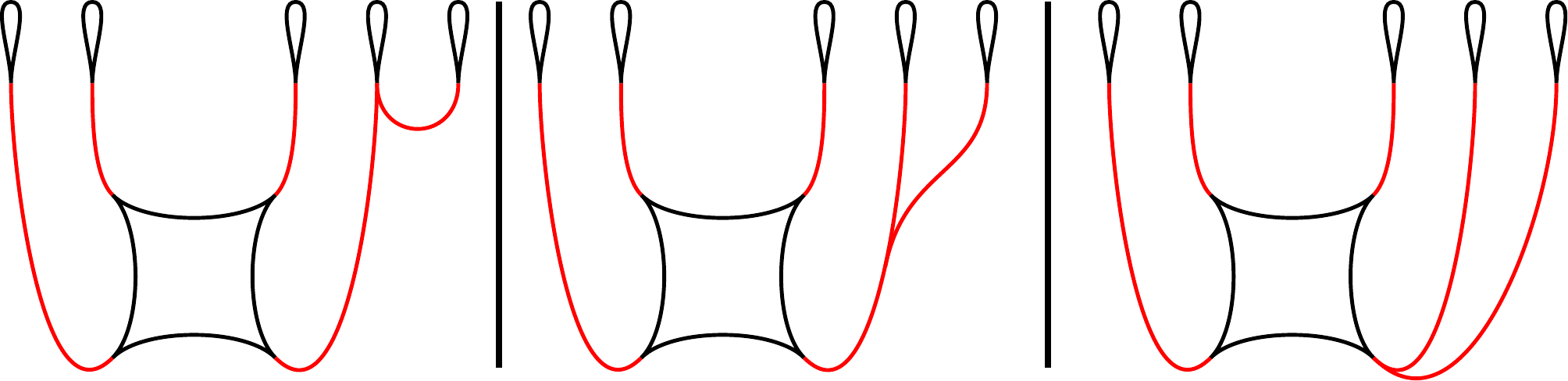}
    \caption{Some train tracks in the stratum $(1;1^5;4)$. Left: a standard track with a joint. Center: a non-standard track (with no joints). Right: a standard, jointless track.}
    \label{fig:jointstandard}
\end{figure}

We say that a train track $\tau\subset D_n$ on an $n$-marked disk $D_n$ is \textit{standard} if:
\begin{itemize}
    \item Every non-peripheral component of $S\setminus\tau$ is an infinitesimal polygon
    \item The switches of $\tau$ are precisely the vertices of the infinitesimal polygons
    \item At each switch, all adjacent infinitesimal edges are tangent to each other; all adjacent real edges are tangent to each other; and no real edge is tangent to an infinitesimal edge
    \item For each marked point $p$ of $D_n$, there is a single edge of an infinitesimal polygon vertically above $p$, and no other edge of $\tau$ is vertically above $p$.
\end{itemize}

See Figure \ref{fig:jointstandard} for an example of a standard train track. The idea of standard train tracks originally appeared in \cite{KLS} (and later \cite{CH} and \cite{HS}) as a tool to study dilatations of pseudo-Anosov braids. In their terminology, a track satisfying the first three properties is \textit{standardly embedded} and a track satisfying the third is in \textit{standard position}.

\begin{prop}[\cite{KLS}]
Every pseudo-Anosov map on a marked disk is carried by a standard train track.
\end{prop}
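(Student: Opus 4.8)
The plan is to take an arbitrary pseudo-Anosov $\psi$ on a marked disk $D_n$ and produce, through a sequence of isotopies and local modifications, a standard track carrying $\psi$. Starting from the Bestvina–Handel construction (\cite{BH}), we already obtain a train track $\tau_0\subset D_n$ carrying $\psi$, together with the decomposition of its edges into real edges (from strips) and infinitesimal edges (from junctions), where every infinitesimal polygon is preserved cusp-for-cusp by the induced map $f_\psi:\tau_0\to\tau_0$. The first step is to arrange that every non-peripheral complementary region is an infinitesimal polygon: since $\psi$ is pseudo-Anosov, each complementary component of $S\setminus\tau_0$ that is not peripheral carries a singularity of the invariant foliation, and by the cusp-counting Proposition (\cite{BH}) the number of cusps is forced; one collapses any "extra" real edges forming part of the boundary of a would-be infinitesimal polygon, or slides real edges off, so that the boundary of each such region consists only of infinitesimal edges. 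This is the "standardly embedded" condition of \cite{KLS}.

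Next I would put $\tau$ into \emph{standard position}: at each switch, perform a small isotopy of the fibered neighborhood so that the cyclic order of edges around the switch is "all infinitesimal edges, then all real edges," with infinitesimal edges mutually tangent, real edges mutually tangent, and no real edge tangent to an infinitesimal one. This is a purely local adjustment at each switch — essentially reordering the branches within the common tangent line — and does not change which map is carried, since we may drag the fibered neighborhood along. The switches can then be taken to be exactly the vertices of the infinitesimal polygons, possibly after absorbing any stray bivalent switches on real edges into adjacent edges.

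The remaining, and I expect the most delicate, step is the fourth condition: arranging that above each marked point $p$ there is exactly one edge of $\tau$ — an edge of an infinitesimal polygon — and nothing else of $\tau$. Here one uses that each marked point of $D_n$ lies in a monogon complementary region (a $1$-pronged singularity at $p$), so the boundary of that region is a single infinitesimal edge running from a switch back to itself. One then isotopes $D_n$ (equivalently, slides the track by an ambient isotopy supported near the marked point) so that this monogon hangs directly below its infinitesimal edge and no other part of $\tau$ passes over $p$. The content to check is that these isotopies near the distinct marked points can be made disjointly and compatibly with the standard-position condition already arranged — i.e.\ that nothing one does near $p_i$ destroys the local picture near $p_j$ or at a switch. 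Since the monogons are disjoint and each sits in a small disk, this is a finite, local disjointness argument. Throughout, each modification is realized by an ambient isotopy of $D_n$ or by a homotopy of the fibered neighborhood, so the resulting standard track still carries $\psi$; this last claim is exactly what Theorem \ref{thm:ttMCG} and the carrying definition guarantee we have not lost. The main obstacle is bookkeeping: verifying that "collapse extra real edges" in step one never merges two infinitesimal polygons or creates a non-monogon marked region, which requires invoking the cusp count of \cite{BH} at every stage.
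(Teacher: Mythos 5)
The paper does not prove this proposition at all: it is quoted verbatim from \cite{KLS} (the paper only remarks that a track satisfying the first three conditions is what \cite{KLS} call \emph{standardly embedded} and one satisfying the last is in \emph{standard position}). So there is no in-paper argument to compare against, and your proposal has to stand on its own. Your overall strategy --- start from the Bestvina--Handel invariant track with its real/infinitesimal edge structure, then massage it into standard form --- is indeed the route taken in the cited literature, and your handling of the standard-position conditions (local reordering at switches, ambient isotopy so that one infinitesimal edge sits vertically above each marked point) is fine in outline.

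The genuine gap is in your first step, which is the mathematical heart of the result rather than ``bookkeeping.'' You assert that one can ``collapse any extra real edges'' or ``slide real edges off'' so that every non-peripheral complementary region becomes an infinitesimal polygon, but you give no mechanism for this, and the naive reading fails: collapsing a real edge of an invariant train track changes the homotopy type of the track and the induced map $f_\psi$, and is only a legitimate, carrying-preserving move under specific hypotheses (e.g.\ collapsing an invariant forest, or performing an elementary folding/splitting move as in the automata of \cite{KLS}, \cite{HS}, \cite{CH}). One must exhibit a finite sequence of such moves that terminates at a standardly embedded track while preserving the property $\psi(\tau)\prec\tau$, and that is precisely what \cite{KLS} prove; invoking the cusp count of \cite{BH} tells you what the target regions must look like but not how to reach them. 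A secondary error: you claim every marked point lies in a monogon complementary region. That is false in general --- a marked point of a pseudo-Anosov on $D_n$ may be $k$-pronged for $k\geq 2$ and then sits inside an infinitesimal $k$-gon (see the paper's Figure 1 and the strata notation $(b;m_1,\dots,m_n;k_1,\dots,k_s)$); your isotopy argument for the fourth condition adapts to $k$-gons, but as written it is justified by an incorrect premise.
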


When the singularity type of a pseudo-Anosov has at least one interior singularity, we can improve this enumeration further. A \textit{joint} of a standard train track $\tau$ is a switch of an infinitesimal monogon surrounding a marked point, which has more than one adjacent real edge. See Figure \ref{fig:jointstandard}. In work with Farber and Wang \cite{FRW}, we proved:

\begin{thm}\label{thm:jointless}
Let $\psi$ be a pseudo-Anosov map on a marked disk. If $\psi$ has at least one interior singularity, then $\psi$ is carried by a standard train track with no joints.
\end{thm}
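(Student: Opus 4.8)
The plan is to begin with \emph{some} standard train track carrying $\psi$, which exists by the result of \cite{KLS} quoted above, and then to remove its joints one at a time by a purely local modification, inducting on the ``excess real valence'' $c(\tau):=\sum_{v}(\deg_{\mathrm{real}}(v)-1)$, the sum taken over all joints $v$ of $\tau$ (so $c(\tau)=0$ exactly when $\tau$ has no joints). Fix a standard track $\tau$ carrying $\psi$ with $c(\tau)>0$, and pick a joint: a switch $v$ on the infinitesimal monogon $m$ around a marked point $p$, with real edges $r_1,\dots,r_k$ incident to $v$ and $k\geq 2$. By the standardness axioms the $r_i$ are mutually tangent at $v$, so near $v$ they run in parallel and all carry the leaves of $\cal{F}_u$ that accumulate on the $1$-pronged singularity at $p$. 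First I would record the local data at $v$: the two ends of the infinitesimal edge $e_m$, the cusp of $m$, the stack of real edges $r_1,\dots,r_k$, and the complementary region $N$ of $\tau$ met just past the outermost real edge on the side of $v$ opposite $m$. Since $\tau$ is standard, every non-peripheral complementary region is an infinitesimal polygon, so $N$ is the peripheral region or an interior infinitesimal $d$-gon.

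The heart of the argument is a joint-removal move: detach the outermost real edge (say $r_k$) from $v$, and slide its free endpoint a short distance into $N$ along the infinitesimal edge of $\del N$ that it faces, subdividing that infinitesimal edge at a new switch $v'$ and re-attaching $r_k$ there. Carried out correctly this is realized by an isotopy of $\psi(\tau)$ inside the fibered neighbourhood $F(\tau)$ followed by a collapse onto the modified track, so $\psi$ remains carried; equivalently, in the Bestvina--Handel picture it is a legal slide of the strip for $r_k$ along the junction at $p$ into the junction bounding $N$. The effect is to drop $\deg_{\mathrm{real}}(v)$ by one (removing the joint once $k$ reaches $1$) at the cost of inserting a bivalent real switch into $N$. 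After re-isotoping the result into standard position one gets a standard track $\tau'$ still carrying $\psi$ with $c(\tau')<c(\tau)$, and iterating produces a standard, jointless track carrying $\psi$, as claimed. I would also need to dispatch the degenerate configurations here --- a real edge that is a loop based at $v$, or a real edge joining two joints --- by the same move applied to a chosen end.

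The place where the hypothesis ``$\psi$ has an interior singularity'' is essential, and the step I expect to be the main obstacle, is showing that the move above can always be arranged \emph{within the fixed singularity type} of $\psi$: relocating the end of $r_k$ into $N$ creates a new cusp on $\del N$, so the cusp bookkeeping has to balance --- a cusp is lost at $v$ and one is gained at $N$ (or is pushed further along the real part of $\tau$). Tracking this shows that the move succeeds precisely when some infinitesimal polygon reachable from $v$ through the real edges has at least three sides, i.e.\ corresponds to an interior singularity of $\psi$; if one tried to absorb the extra cusp into a monogon or bigon one would create a forbidden interior monogon or bigon and carrying would fail. Because $\psi$ is pseudo-Anosov with at least one interior singularity, $\tau$ has an interior $d$-gon with $d\geq 3$, and connectivity of $\tau$ lets one route the move to it; when there is no interior singularity the reduction genuinely stalls, consistent with the hypothesis being necessary. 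The remaining work is routine but must be done carefully: verifying that the isotopy of $F(\tau)$ does not destroy transversality to the leaves (so $\psi$ is still carried), that standard position can be restored, and that $c$ strictly decreases in every case.
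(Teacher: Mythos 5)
The paper does not actually prove Theorem~\ref{thm:jointless}: it is quoted from \cite{FRW}, so there is no in-paper argument to measure your proposal against. Evaluated on its own terms, the proposal has a genuine gap at its central step, namely the claim that the joint-removal slide preserves the property of carrying $\psi$. Whether a modified track $\tau'$ carries $\psi$ is not a combinatorial property of $\tau'$ alone; it depends on the induced train track map $f_\psi\colon\tau\to\tau$ (equivalently, on the invariant measured foliation $\mathcal{F}_u$ and its weights on the edges of $\tau$). Detaching the end of $r_k$ from $v$ and reattaching it at a new switch $v'$ on $\partial N$ changes which turns are legal, so image paths $f_\psi(e)$ that run over $r_k$ and then turn onto the infinitesimal monogon edge at $v$ (or conversely) need not remain train paths on $\tau'$, and $\psi(\tau')$ need not be isotopic into $F(\tau')$ transverse to the leaves. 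The moves known to preserve carrying are folds and the splittings dictated by the weights of $\mathcal{F}_u$; an arbitrary reattachment is neither, and nothing in your argument consults $f_\psi$ or the measure to choose the region $N$ or to certify transversality afterwards. This is not a ``routine verification'' to be deferred --- it is essentially the whole content of the theorem.

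A second, related problem is the appeal to ``connectivity of $\tau$'' to route the displaced cusp to an interior $d$-gon with $d\geq 3$. A single slide only affects the complementary regions adjacent to $r_k$ near $v$; if none of these is a $\geq 3$-gon you would need a chain of such moves, each of which faces the same carrying obstruction and each of which must preserve the stratum (no complementary monogons or bigons may be created at any intermediate stage, not just at the end). As written, the induction on $c(\tau)$ therefore does not get started. Any correct proof has to bring in the dynamics of $\psi$ --- for instance the behaviour of the induced train track map at the periodic marked monogons, or the folding sequence representing $\psi$ in the automaton --- and that is also where the hypothesis of an interior singularity must do its work, rather than serving only as a combinatorial target for an extra cusp.
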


\subsection{Branched covers and lifting train track maps}\label{sec:lift}

\subsubsection{Braids and the Birman--Hilden correspondence}
We will be interested in studying fibered knots and mapping classes under branched coverings. The Birman--Hilden correspondence describes the relevant operation in our setting. We may view the $n$-marked disk $D_n$ as the quotient of a surface $S$ via the action of a fixed hyperelliptic involution $\iota: S\to S$. When a map $h:S\to S$ commutes with $\iota$, it projects to an $n$-braid $\beta:D_n\to D_n$. In this case, we say that $h$ is \textit{symmetric}. Every $n$-braid lifts to a symmetric map on a surface $S$ of genus $\lfloor\frac{n-1}{2}\rfloor$ with 1 boundary component if $n$ is odd, and 2 boundary components if $n$ is even.

Here is how to interpret the correspondence from the perspective of knots and 3-manifolds. Suppose $\beta$ lifts to $h$, where $(S,h)$ is an open book decomposition for $Y$ describing the fibered knot $K\subset Y$. Then, $Y$ is the double cover over $S^3$ branched along the braid closure $\widehat{\beta}$. And, $K$ is the lift to $Y$ of the unknotted braid axis for $\beta$ in $S^3$ under this covering.

The Nielsen--Thurston class is preserved under the Birman--Hilden correspondence, i.e. $\psi_\beta$ is pseudo-Anosov (resp. periodic, reducible) iff $\psi_h$ is (resp. periodic, reducible). In the case that $\psi_h$ and $\psi_\beta$ are pseudo-Anosov, the invariant foliations can be tracked through the lift, as well. Specifically, the number of prongs of the foliations at the marked points double in the lift. For example, 1-pronged marked points of $\psi_\beta$ lift to regular points of $\psi_h$; and 2-pronged marked points of $\psi_\beta$ lift to 4-pronged marked points of $\psi_h$. The number of boundary prongs double in a similar manner. Away from the boundary, however, each non-marked singularity of $\psi_\beta$ lifts to two singularities of $\psi_h$ with the same number of prongs. So, for example, $\psi_\beta$ has singularity type $(1;1^5;4)$ exactly when $\psi_h$ has singularity type $(2;\emptyset;4^2)$. Analyzing how $\psi_\beta$ and $\psi_h$ twist near the boundary leads also to a relationship between fractional Dehn twist coefficients: we always have $c(\beta)=2c(h)$.

\begin{figure}[htp]
    \labellist
    \Large
    \pinlabel $f_h$ at 280 320
    \pinlabel $\text{lift}$ at 475 280
    \pinlabel $\text{lift}$ at 95 280
    \pinlabel $f_\beta$ at 280 210
    \endlabellist
    \centering
    \includegraphics[width=0.9\textwidth]{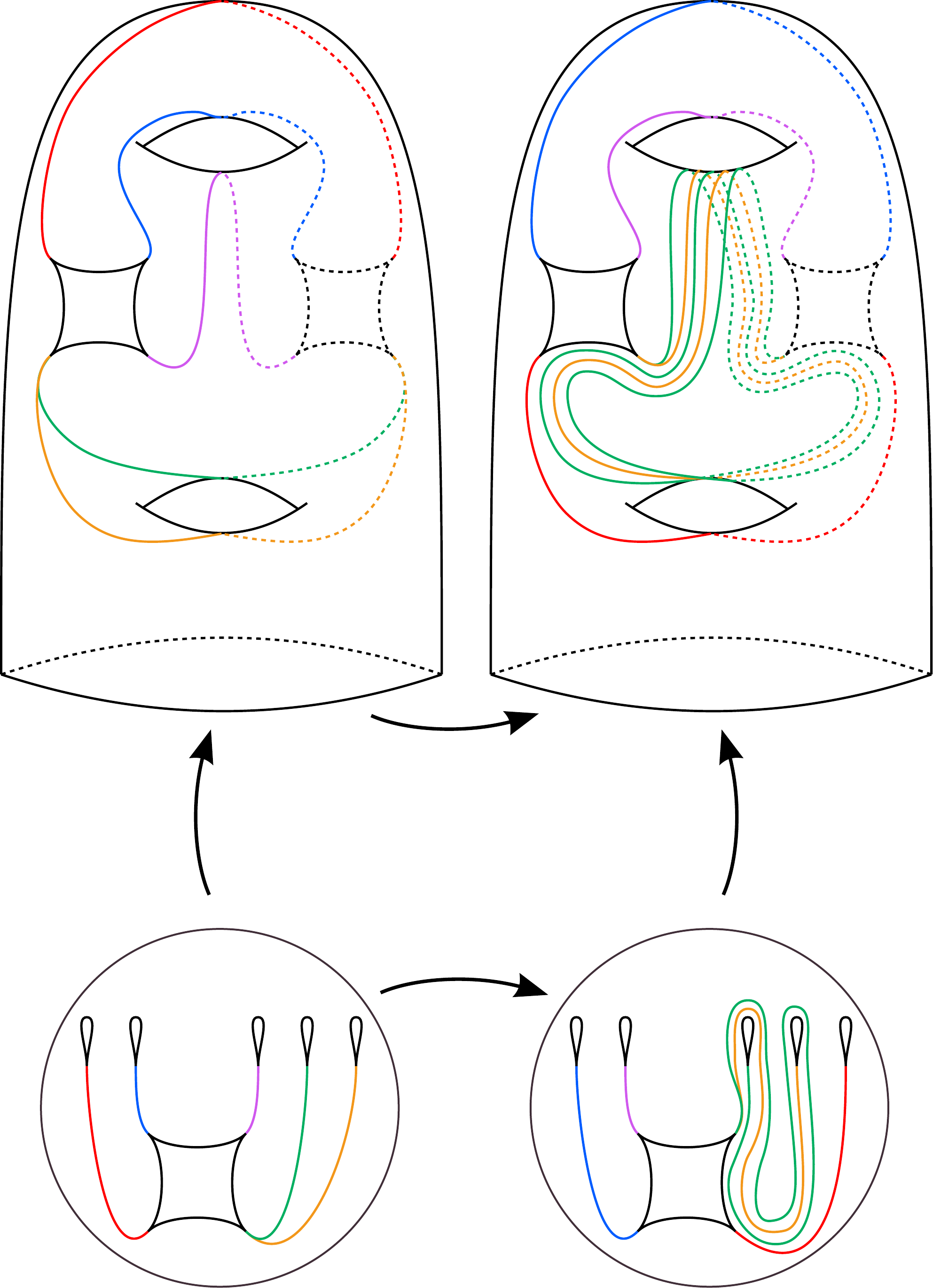}
    \caption{Lifting a train track and train track map from $D_5$ to $S$. Dashed edges indicate the back side of the surface.}
    \label{fig:4tracklift}
\end{figure}

\subsubsection{Lifting standard tracks, and the trace lemma}

\begin{figure}[htp]
    \centering
    \labellist
    \Large
    \pinlabel $a_1$ at 4 90
    \pinlabel $a_2$ at 134 90
    \pinlabel $x_1$ at 30 102
    \pinlabel $x_2$ at 108 102
    \pinlabel $a_1$ at 210 108
    \pinlabel $a_2$ at 277 108
    \pinlabel $a_1$ at 24 70
    \pinlabel $b_2$ at 116 70
    \pinlabel $y_1$ at 43 40
    \pinlabel $y_2$ at 98 40
    \pinlabel $x_1$ at 56 60
    \pinlabel $x_2$ at 80 60
    \pinlabel $b_1$ at 24 6
    \pinlabel $a_2$ at 116 6
    \pinlabel $a_1$ at 204 68
    \pinlabel $b_2$ at 284 68
    \pinlabel $y_1$ at 216 40
    \pinlabel $y_2$ at 272 40
    \pinlabel $x_2$ at 244 66
    \pinlabel $x_1$ at 244 10
    \pinlabel $b_1$ at 204 8
    \pinlabel $a_2$ at 284 8
    \endlabellist
    \includegraphics[width=0.95\textwidth]{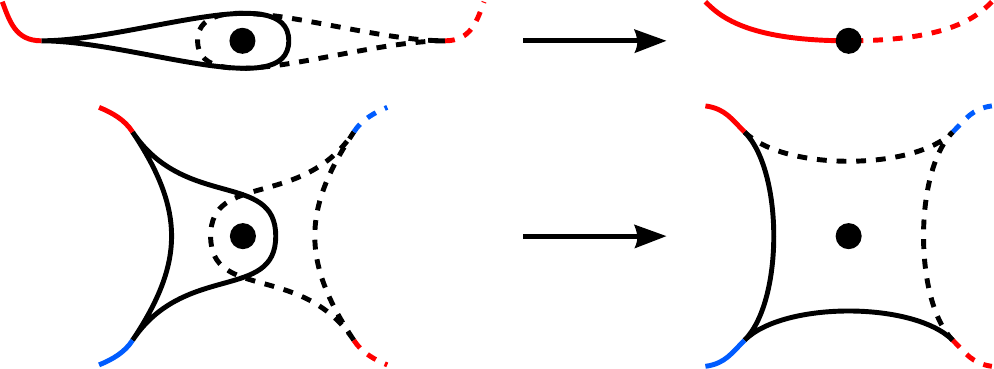}
    \caption{Smoothing infinitesimal polygons near a marked point. Top: smoothing monogons to a regular point. Bottom: smoothing bigons to a rectangle. In both images, the involution $\iota$ is a $180^\circ$ rotation about the marked point, and the side-swapping edge in $\tau$ is $x$, which lifts to $x_1,x_2$ in $\widetilde{\tau}$.}
    \label{fig:liftpolygon}
\end{figure}

Let $(\beta,\psi_\beta,\tau,f_\beta)$ be the geometric data of a pseudo-Anosov $n$-braid. We will explicitly construct related data $(h,\psi_h,\widetilde{\tau},f_h)$ for the lift $h$ of $\beta$ under the Birman--Hilden correspondence. For simplicity, assume that $\tau$ is standard (though a similar construction will work for non-standard tracks), and start by lifting $\tau$ to obtain two copies of $\tau$ on $S$. Because we assumed $\tau$ to be standard, the two copies of $\tau$ in the lift together produces a train track away from the marked points. But, near the marked points, we see two copies of the infinitesimal polygons surrounding each marked point, which we need to paste together coherently. To do this, simply replace each pair of infinitesimal $p$-gons surrounding a marked point with a single infinitesimal $2p$-gon, as in Figure \ref{fig:liftpolygon}. If $p=1$, instead replace the pair of monogons with a smooth point. See Figure \ref{fig:4tracklift} for an example of a full lifted track $\widetilde{\tau}$.

This construction produces a train track $\widetilde{\tau}$ for which edges come in pairs $e_1,e_2$ of lifts of a single real edge $e$ of $\tau$; the pairs satisfy $\iota(e_1)=e_2$. Moreover, any train path $e(1)...e(n)$ in $\tau$ always lifts to exactly two train paths $e(1)_1...e(n)_{n_i}$ and $e(1)_2...e(n)_{n_j}$ in $\widetilde{\tau}$. These two train paths are distinguished by which ``side" of $\widetilde{\tau}$ the paths start on. Now, for an arbitrary real edge $e$ of $\tau$, let $f_\beta(e)=e(1)...e(n)$. To construct $f_h:\widetilde{\tau}\to\widetilde{\tau}$, simply define $f_h(e_i)=e(1)_{i_1}...e(n)_{i_n}$ to be the unique lift of $f_\beta(e)$ which starts on the same side of $\widetilde{\tau}$ as $\psi_h(e_i)$ does. An example of a lifted train track map is shown in Figure \ref{fig:4tracklift}.

The explicit construction of $f_h$ from $f_\beta$ allows us to examine fixed point properties of $\psi_h$ in terms of the transition matrix $M(f_\beta)$. Recall that a standard track on the marked disk has a unique edge of an infinitesimal polygon above each marked point. For the statement and proofs of the next two lemmas we will refer to these edges as \textit{side-swapping edges}: their key feature is that when a train path runs over a side-swapping edge, it swaps to the ``other side" of $\widetilde{\tau}$. See Figure \ref{fig:liftpolygon}. The following lemma will be crucial for our analysis.

\begin{lem}[The trace lemma]\label{lem:trace}
Let $(\beta,\psi_\beta,\tau,f_\beta)$ be the data of a pseudo-Anosov braid with lift $(h,\psi_h,\widetilde{\tau},f_h)$ on $S$. Suppose that $\tau$ is standard and that $\psi_h$ has no interior fixed points on $S$. Then, for any real edge $e$ of $\tau$, and between any two occurrences of $e$ in $f_\beta(e)$, the image path $f_\beta(e)$ must contain an even number of side-swapping edges.
\end{lem}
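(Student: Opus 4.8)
The plan is to track a single train path $f_\beta(e)$ through the lift and use the fixed-point hypothesis to rule out an odd number of side-swapping edges between consecutive occurrences of $e$. Recall from the construction of $\widetilde\tau$ and $f_h$ that $\tau$ has exactly two lifts $e_1,e_2$ of each real edge $e$, interchanged by $\iota$, and that a train path in $\tau$ lifts to exactly two train paths in $\widetilde\tau$, distinguished by which side of $\widetilde\tau$ they start on; crucially, a lifted path switches sides exactly when it runs over a side-swapping edge. So if $f_\beta(e) = e(1)\cdots e(n)$ and $e$ appears as $e(j)$, then in the lift $f_h(e_i)$ the occurrence of $e(j)$ is $e_1$ if the number of side-swapping edges among $e(1),\dots,e(j-1)$ has the same parity as the starting side, and $e_2$ otherwise.

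First I would set up the counting: fix a real edge $e$ of $\tau$ and suppose $e$ occurs at positions $j_1 < j_2$ in $f_\beta(e)$ with an \emph{odd} number of side-swapping edges strictly between them. The goal is to derive an interior fixed point of $\psi_h$, contradicting the hypothesis. Consider $f_h(e_1)$: at position $j_1$ it passes over one of $e_1, e_2$, say (after possibly relabeling, or handling both cases) $e_1$; by the odd-parity assumption it then passes over $e_2$ at position $j_2$. Then $f_h(e_1)$ runs over both $e_1$ and $e_2$. The key step is the classical fact (Proposition \ref{prop:fix}-style, via the trace) that when a train track map $f$ has $f(a)$ passing over $a$ itself, the corresponding pseudo-Anosov has a fixed point in the interior of the edge $a$ (or more precisely, each diagonal entry of the transition matrix contributes to $\mathrm{tr}(M(f_h))$ and hence to $|\mathrm{Fix}(\psi_h)|$). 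Here the subtlety is that $f_h(e_1)$ runs over $e_1$ \emph{and} $e_2 = \iota(e_1)$; I would argue this forces a fixed point not on $\partial S$: a path from $e_1$ over to $\iota(e_1)$ must cross the fixed locus of some iterate, but since $\iota$ has only finitely many fixed points and $\psi_h$ is pseudo-Anosov with $\psi_h \iota = \iota \psi_h$, the relevant fixed point lies in the interior of $S$. Alternatively — and this is probably cleaner — I would show directly that the diagonal entry $M(f_h)_{e_1,e_1}$ or $M(f_h)_{e_2,e_2}$ is positive: since $f_\beta(e)$ has $e$ at both $j_1$ and $j_2$ with odd side-swap parity between, one of the two lifts $f_h(e_1), f_h(e_2)$ contains $e_1$ twice (starting side chosen so that positions $j_1$ and $j_2$ land on the same lift is impossible by odd parity — so instead I use that $f_h(e_1)$ contains $e_1$ at $j_1$ and $f_h(e_2)$ contains $e_2$ at $j_1$, and chase how $j_2$ distributes), yielding $\mathrm{tr}(M(f_h)) \geq 1$ coming from an interior edge, hence an interior fixed point by Proposition \ref{prop:fix} combined with the fact that boundary fixed points don't contribute to this count.

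The main obstacle I anticipate is the bookkeeping in the previous paragraph: making precise exactly which lift $f_h(e_i)$ contains which of $e_1, e_2$ at each occurrence, and converting "$f_h(e_i)$ runs over $e_i$" into a genuine \emph{interior} fixed point rather than a boundary one. The Euler–Poincaré / stratum data should help here — in all the relevant strata $(2;\emptyset;4^2)$ and $(2;\emptyset;3^4)$, a fixed point forced by a diagonal transition-matrix entry on a real edge of a standard track lies in the interior, since the side-swapping edges (which are infinitesimal, not real) are the only edges adjacent to the peripheral region in the relevant way. I would also need to be careful that "an even number of side-swapping edges between any two occurrences of $e$" is exactly the negation of what produces a diagonal contribution: if \emph{every} pair of consecutive occurrences has even parity, then in a fixed lift $f_h(e_i)$ all occurrences of the fiber $\{e_1,e_2\}$ land on the \emph{same} lift, so $M(f_h)$ has no diagonal contribution from this edge-pair beyond what's already present, and no spurious interior fixed point is forced. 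Assembling these observations gives the contrapositive: no interior fixed points $\implies$ even side-swap count between consecutive occurrences.
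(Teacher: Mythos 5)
Your approach is essentially the paper's own proof: by Proposition \ref{prop:fix} the absence of interior fixed points forces $M(f_h)$ to be traceless, and an odd number of side-swapping edges between two occurrences of $e$ would make $f_h(e_i)$ pass over both $e_1$ and $e_2$ (one at each occurrence, since the lifted path changes sides exactly at side-swapping edges), hence over $e_i$ itself, producing a forbidden diagonal entry. The only wobble is your phrase about a lift containing $e_1$ ``twice'' --- the correct bookkeeping is that $f_h(e_i)$ contains each of $e_1,e_2$ once --- but your surrounding discussion recovers the right conclusion, so the argument matches the paper's.
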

\begin{proof}
Because $\psi_h$ has no interior fixed points, we know $M(f_h)$ must be traceless, by Proposition \ref{prop:fix}. In particular, for any real edge $e_i\in\widetilde{\tau}$, we know that $e_i$ does not appear in $f_h(e_i)$.

Now, note that every time $f_\beta(e)$ passes over a side-swapping edge, the image $f_h(e_i)$ of either lift of $e$ crosses over to the other side of $\widetilde{\tau}$. So, if $e$ appears in $f_\beta(e)$ with an odd number of side-swapping edges between adjacent appearances, then both lifts $e_1$ and $e_2$ appear in the image $f_h(e_1)$ (and same for $f_h(e_2)$).
\end{proof}

The following is a special case of the trace lemma as stated above, and is identical to what was originally referred to as the trace lemma in \cite{FRW}. Because the train tracks relevant to this paper will always have jointless monogons at all marked points, it will be a very useful simplification.

\begin{cor}[The trace lemma for jointless monogons]\label{lem:tracemon}
Retain the same notation and assumptions as in the previous lemma. If $e$ is a real edge of $\tau$ which is incident to a jointless monogon at a marked point, then $e$ does not appear in $f_\beta(e)$.
\end{cor}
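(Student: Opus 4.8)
The plan is to deduce this from the general trace lemma (Lemma~\ref{lem:trace}) together with a local analysis of train paths near the jointless monogon. Let $p$ be the marked point whose monogon $e$ is incident to, let $m$ be the single infinitesimal edge bounding that monogon, and let $v$ be its unique switch. Since $m$ is the only edge of an infinitesimal polygon lying vertically above $p$, it is precisely the side-swapping edge associated to $p$ in the standard track $\tau$. Because the monogon is jointless, $e$ is the only real edge incident to $v$, so the only edges meeting $v$ are $e$ and the two ends of $m$.

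Next I would record the key local fact: by the standardness conditions on the tangencies at $v$, a train path that traverses $e$ and arrives at $v$ has exactly one smooth continuation --- onto $m$, once around the monogon, and back to $v$, where it is forced onto $e$ again (it cannot terminate at $v$ heading into the cusp, and cannot reverse onto $e$). Hence every passage of a train path through this monogon region has the form $e\,m\,e$: two traversals of $e$ separated by a single crossing of the side-swapping edge $m$. Equivalently, in the lift $\widetilde{\tau}$ (where the pair of monogons over $p$ is smoothed to a regular point, cf.\ Figure~\ref{fig:liftpolygon}), such a passage lifts to one in which the two lifts $e_1,e_2$ of $e$ both appear.

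Now suppose, for contradiction, that $e$ occurs in $f_\beta(e)$. If some occurrence uses the $v$-end of $e$ and $f_\beta(e)$ extends past $v$ there, then by the previous step $f_\beta(e)$ contains two occurrences of $e$ with exactly one side-swapping edge between them, an odd number, contradicting Lemma~\ref{lem:trace}. The only way to avoid this is for an occurrence of $e$ to sit at an endpoint of the train path $f_\beta(e)$ without looping the monogon; but the endpoints of $f_\beta(e)$ are the $\psi_\beta$-images of the two endpoints of $e$, and $\psi_\beta$ cannot fix $p$ --- a fixed $1$-pronged marked point $p$ is a branch point of $S\to D_5$ with a unique preimage $\tilde p\in S$, which would then be an interior fixed point of $\psi_h$. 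Unwinding the few remaining configurations (using primitivity of $M(f_\beta)$, Proposition~\ref{prop:fix}, to rule out degenerate images such as $f_\beta(e)=m^k e$) shows that the path is forced into the $e\,m\,e$ pattern at each occurrence of $e$ after all. So $e$ cannot appear in $f_\beta(e)$.

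The conceptual content is the local picture at $v$, which is short once the standardness conventions are unwound; I expect the only finicky part to be confirming the endpoint bookkeeping in the last step --- i.e.\ ruling out an occurrence of $e$ stranded at the very start or end of $f_\beta(e)$.
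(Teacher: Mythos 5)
Your argument is correct and follows essentially the same route as the paper's: an interior occurrence of $e$ in $f_\beta(e)$ forces the pattern $e\,m\,e$ with a single (odd) side-swapping edge between the two appearances, contradicting Lemma~\ref{lem:trace}, while a terminal occurrence forces $\psi_\beta$ to fix the marked point $p$, whose unique preimage is then an interior fixed point of $\psi_h$. The paper's proof is just a compressed version of this same two-case split, so the only difference is stylistic --- your endpoint bookkeeping and the aside about primitivity of $M(f_\beta)$ can be trimmed.
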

\begin{proof}
Suppose that $e$ does appear in $f_\beta(e)$. Because $e$ is adjacent to a jointless monogon, either there are two adjacent appearances of $e$, or $f_\beta(e)$ ends on $e$. The first case is not allowed by the previous lemma, and the second case produces a fixed point in the lift, at the end of $f_h(e_i)$.
\end{proof}

\section{The strata $(4;\emptyset;3^2)$, $(6;\emptyset;\emptyset)$, and $(2;\emptyset;4^2)$}\label{sec:244}

The rest of the paper will be devoted to proving Theorem \ref{thm:FPF}, as explained in the outline (Section \ref{sec:outline}). From the argument in the outline, it suffices to consider pseudo-Anosov maps in just four strata: $(6;\emptyset;\emptyset)$, $(4;\emptyset;3^2)$, $(2;\emptyset;4^2)$ and $(2;\emptyset;3^4)$. The first three of these strata will be relatively easy to deal with, so we will study them all in this section. Then, we will examine the stratum $(2;\emptyset;3^4)$ in Section \ref{sec:23333}, which will take considerably more work.

\subsection{The strata $(4;\emptyset;3^2)$ and $(6;\emptyset;\emptyset)$}\label{sec:6433}

We can initiate the proof of Theorem \ref{thm:FPF} by completing our analysis of the strata $(4;\emptyset;3^2)$ and $(6;\emptyset;\emptyset)$. These strata were first studied in \cite{FRW}, so we will be able to deal with them quickly and then move onto the new strata.

\begin{subthm}\label{thm:433}
Let $h\in\Mod(S)$ be a pseudo-Anosov mapping class with geometric representative $\psi_h$. Suppose that $\psi_h$ is FPF and has singularity type $(3;\emptyset;4^2)$. Then, $(S,h)$ is not an open book decomposition for $\cal{P}$.
\end{subthm}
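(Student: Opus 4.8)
Taken at face value, the singularity type $(3;\emptyset;4^2)$ --- a $3$-pronged boundary together with two interior $4$-pronged singularities --- cannot occur on the genus-two surface with one boundary component: the Euler--Poincar\'e relation $\sum_j(p_j-2)+\sum_i k_i = -2\chi(S) = 6$ would force $2+2+3 = 7 = 6$, and more generally it cannot occur on any one-boundary surface since $4g-2=7$ has no integer solution $g$. So, read literally, the hypothesis is vacuous and there is nothing to prove. I will instead prove the statement for the stratum that is clearly intended --- and that is the one treated in the Outline and in \cite{FRW} --- namely $(4;\emptyset;3^2)$: a $4$-pronged boundary with two interior $3$-pronged singularities.

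The plan is a proof by contradiction: suppose $(S,h)$ is an open book for $\cal{P}$ with $\psi_h$ fixed-point-free of singularity type $(4;\emptyset;3^2)$. Since $\cal{P}$ is an L-space, $|c(h)|<1$ by \cite{HM} or \cite{OScont}. The boundary carries $4\ge 2$ prongs, so --- exactly as in the Outline --- the work of \cite{BHS} applies and $h$ commutes with a hyperelliptic involution $\iota$, descending to a $5$-braid $\beta$ with $\widehat\beta = T(3,5)$ (because $\cal{P}$ is the double cover of $S^3$ branched over $\widehat\beta$) and with $c(\beta) = 2c(h)$, so $0\le|c(\beta)|<2$. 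Tracking the foliations through the Birman--Hilden cover --- boundary prongs halve, the $\iota$-orbit of the two interior $3$-pronged points descends to a single $3$-pronged point, and the five fixed points of $\iota$ (which are regular points of the foliations of $\psi_h$, since an $\iota$-rotation by $\pi$ cannot fix an odd-pronged singularity) become $1$-pronged marked points --- shows $\psi_\beta$ lies in the stratum $(2;1^5;3)$.

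The heart of the argument is then a finite check. By \cite{FRW}, fixed-point-free pseudo-Anosov maps on $S$ in the stratum $(4;\emptyset;3^2)$ are classified: up to conjugation and powers of the boundary Dehn twist there are only finitely many, and --- via Theorem \ref{thm:ttMCG} and its corollary --- each symmetric one descends to a $5$-braid determined up to conjugation and powers of the full twist $\Delta^2$. For each such candidate I would rule out the possibility that $(S,h)$ is an open book for $\cal{P}$ using the two branched-cover constraints: (i) if no twisted representative $\Delta^{2k}\beta^{\pm1}$ has closure $T(3,5)$, then $\cal{P}$ is not the double branched cover and that candidate is eliminated --- I expect this to handle every candidate, since the relevant closures in this stratum should be hyperbolic rather than torus knots; (ii) for any representative with $\widehat{\Delta^{2k}\beta}=T(3,5)$, a short computation of $c(\Delta^{2k}\beta^{\pm1})$ should give $|c(\beta)|\ge 2$, contradicting $|c(h)|<1$. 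FRW maps that are not symmetric are eliminated at once, again by \cite{BHS}.

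The one piece of real work --- and the reason \cite{FRW} calls this case quick --- will be unpacking the FRW classification: identifying the finitely many candidate train track maps, writing out the corresponding braid words, and computing their closures and fractional Dehn twist coefficients in the conventions of Section \ref{sec:background}. The main pitfall is bookkeeping: the sign of $c$, the fact that the braid is pinned down only up to $\Delta^2$ while the symmetric lifts form a sub-lattice of twists, and the correct propagation of prong and boundary data through the $2$-fold branched cover. If, as I anticipate, no candidate has a twisted representative closing to $T(3,5)$, the argument reduces to point (i) and is immediate.
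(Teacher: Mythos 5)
Your reading of the stratum is right: $(3;\emptyset;4^2)$ is a typo for $(4;\emptyset;3^2)$ (the section heading and the Outline confirm this), and your overall frame is the same one the paper uses: symmetry via \cite{BHS} from the $\geq 2$-pronged boundary, Birman--Hilden descent to a $5$-braid $\beta$ with $\widehat{\beta}=T(3,5)$, the bound $|c(\beta)|=2|c(h)|<2$ from the L-space condition, and then elimination of the candidates supplied by the \cite{FRW} classification.

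The gap is that the decisive step is deferred, and your anticipation of how it will go is wrong on two counts. First, the FRW classification in this stratum is not a finite list: up to conjugation, inversion, and powers of the full twist, the FPF maps are the lifts of the infinite one-parameter family $\beta_n=\sigma_1^{n+2}\sigma_2\sigma_3\sigma_4\sigma_1\sigma_2\sigma_3\sigma_4^2$, $n\geq 0$. So the ``finite check'' must in fact be an argument uniform in $n$. Second, your mechanism (i) --- that the relevant closures ``should be hyperbolic rather than torus knots'' --- fails: $\widehat{\beta_n}=T(2,n+7)$ is a torus knot and $\widehat{\Delta^2\beta_n^{-1}}$ is the pretzel $P(3,3-n,-2)$; these are ruled out because they are not $T(3,5)$, not because they are hyperbolic. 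Your fallback (ii) does not fire either: since $c(\Delta^{2k}\beta_n^{\pm1})=k\pm\tfrac12$, the FDTC bound leaves not only $\beta_n^{\pm1}$ but also $\Delta^{\pm2}\beta_n^{\pm1}$ and $\Delta^{\pm4}\beta_n^{\mp1}$, all with coefficients strictly inside $(-2,2)$, so no FDTC computation can eliminate them; they must be killed by closure invariants. The paper does this with the identifications above together with a self-linking bound for $\Delta^2\beta_n$ (namely $sl(\Delta^2\beta_n)=25+n$ exceeds $\overline{sl}(T(3,5))=7$) and a determinant computation for $\Delta^4\beta_n^{-1}$. Until those (or equivalent) computations are carried out for all $n$ and all allowed twistings, the theorem is not proved: the strategy is correct, but essentially all of the proof's content lives in exactly the step you left as an expectation.
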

\begin{proof}
By \cite{FRW}, we know that such an $h$ is conjugate to the lift of $\Delta^{2k}\beta_n^{\pm1}$ for some $k\in\bb{Z}$ and $n\geq 0$, where $\beta_n=\sigma_1^{n+2}\sigma_2\sigma_3\sigma_4\sigma_1\sigma_2\sigma_3\sigma_4^2$. We may compute that, as a braid, $c(\Delta^{2k}\beta_n^{\pm1})=k\pm\frac{1}{2}$. Let $\beta$ be the braid from this class that lifts to our hypothetical $h$. Because $\cal{P}$ is an L-space, we then know that $|c(\beta)|<2$ by \cite{HM}. So, we can conclude that $\beta$ must be conjugate to one of $\beta_n^{\pm1}$, $\Delta^{\pm2}\beta_n^{\mp1}$, $\Delta^{\pm2}\beta_n^{\pm1}$, or $\Delta^{\pm4}\beta_n^{\mp1}$. Moreover, because $(S,h)$ is an open book decomposition for $\cal{P}$, we know that, as a knot, $\widehat{\beta}=T(3,5)$. For any braid $\beta$, the closure $\widehat{\beta^{-1}}$ is the mirror of $\widehat{\beta}$, so it suffices to check that none of $\beta_n$, $\Delta^2\beta_n^{-1}$, $\Delta^2\beta_n$, or $\Delta^4\beta_n^{-1}$ close up to $T(3,5)$. 

By \cite{FRW}, we know that $\widehat{\beta_n}=T(2,n+7)$ and $\widehat{\Delta^2\beta_n^{-1}}=P(3,3-n,-2)$, none of which are isotopic to $T(3,5)$. For $\Delta^2\beta_n$, we may simply compute the self-linking number: $sl(\Delta^2\beta_n)=25+n$ is greater than the maximal self-linking number $\overline{sl}(T(3,5))=7$ of $T(3,5)$. For $\Delta^2\beta_n^{-1}$, simply note that $\det \widehat{\Delta^4\beta_n^{-1}}\neq 1$: if the determinant were 1, then $\det\widehat{\beta_n^{-1}}=1$, too, but instead $\det\widehat{\beta_n^{-1}}=\det T(2,n+7)=n+7>1$.
\end{proof}

\begin{subthm}\label{thm:6}
Let $h\in\Mod(S)$ be a pseudo-Anosov mapping class with geometric representative $\psi_h$. Suppose that $\psi_h$ is FPF and has singularity type $(6;\emptyset;\emptyset)$. Then, $(S,h)$ is not an open book decomposition for $\cal{P}$.
\end{subthm}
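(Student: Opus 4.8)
The plan is to argue as in the proof of Theorem~\ref{thm:433}: feed the \cite{FRW} restriction on fixed-point-free maps in the stratum $(6;\emptyset;\emptyset)$ into the numerical constraints forced by $(S,h)$ being an open book for $\cal{P}$. Those constraints are: $|c(h)|<1$, so that the Birman--Hilden quotient $\beta$ of $h$ has $|c(\beta)|=2|c(h)|<2$; the homology bound $|H_1(\cal{P};\zz)|=1$; and $\widehat{\beta}=T(3,5)$. The input I would take from \cite{FRW} is that a fixed-point-free $\psi_h$ in this stratum has $\mathrm{tr}(h_\ast)=1$ on $H_1(S)$ and that the dilatation $\lambda>1$ of $\psi_h$ is a root of the reciprocal characteristic polynomial $\Delta_h(t):=\det(tI-h_\ast)=t^4-t^3+ct^2-t+1$ for some integer $c$.

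First I would pin down $c$ using homology. Since the exterior of the associated fibered knot is the mapping torus $M_h$, we have $1=|H_1(\cal{P};\zz)|=|\det(h_\ast-I)|=|\Delta_h(1)|=|c|$, so $c=\pm1$ and $\Delta_h(t)$ is either $t^4-t^3+t^2-t+1$ or $t^4-t^3-t^2-t+1$. The first of these equals the cyclotomic polynomial $\Phi_{10}(t)$, all of whose roots lie on the unit circle; since $\lambda>1$ is forced to be a root of $\Delta_h$, this case is immediately impossible.

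It remains to rule out $\Delta_h(t)=t^4-t^3-t^2-t+1$, whose largest real root is the Lanneau--Thiffeault minimal dilatation $\lambda_0\approx 1.72208$ of the closed genus-two surface; this is the step I expect to be the main obstacle, since it is not excluded by the above arithmetic. Here I would pass to the Birman--Hilden quotient $\beta$, a pseudo-Anosov $5$-braid in the stratum $(3;1^5;\emptyset)$ with dilatation $\lambda_0$, with $|c(\beta)|<2$, and with $\widehat{\beta}=T(3,5)$. Using \cite{FRW} to enumerate the finitely many monodromies realizing $\Delta_h$ --- equivalently, the finitely many conjugacy classes of such $\beta$ up to powers of the full twist $\Delta^{2}$ --- I would compute the fractional Dehn twist coefficients $c(\Delta^{2k}\beta^{\pm1})$, retain only the finitely many twisted representatives with $|c|<2$, and then verify that none of them closes to $T(3,5)$. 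As in the proof of Theorem~\ref{thm:433}, this last check uses classical invariants: the self-linking number $sl(\Delta^{2k}\beta^{\pm1})=w(\Delta^{2k}\beta^{\pm1})-5$ must not exceed $\overline{sl}(T(3,5))=2g(T(3,5))-1=7$, and in the few borderline cases the determinant ($\det T(3,5)=1$), the Alexander polynomial, or a direct identification of the closed braid settles the matter. Granting the \cite{FRW} enumeration, everything after the cyclotomic step is short bookkeeping; the crux is producing the candidate braids attached to the Lanneau--Thiffeault polynomial and ruling out $T(3,5)$ as their closure.
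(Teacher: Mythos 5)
Your proposal is correct and follows essentially the same route as the paper: both reduce to the \cite{FRW} classification of fixed-point-free maps in the stratum $(6;\emptyset;\emptyset)$ (which identifies the monodromy, up to conjugation and full twists, with the lift of the Ham--Song braid $\alpha=\sigma_1\sigma_2\sigma_3\sigma_4\sigma_1\sigma_2$ realizing the minimal genus-two dilatation), then eliminate the finitely many twisted representatives satisfying $|c(\beta)|<2$ by checking that none closes to $T(3,5)$ via self-linking numbers, determinants, or direct isotopy. Your preliminary detour through the characteristic polynomial --- ruling out $\Phi_{10}(t)$ and landing on the Lanneau--Thiffeault polynomial $t^4-t^3-t^2-t+1$ --- is a correct, slightly more explicit derivation of the minimal-dilatation fact that the paper simply cites from \cite{FRW}.
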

\begin{proof}
By \cite{FRW}, we know that the geometric representative $\psi_h$ of such an $h$ achieves the minimal dilatation $\lambda(\psi_h)=\lambda_2$ in genus-two. Following a similar argument to the proof of Proposition 3.7 from \cite{FRW}, we can see that $h$ is the lift of $\Delta^{2k}\alpha^{\pm1}$ for some $k\in\bb{Z}$, where $\alpha=\sigma_1\sigma_2\sigma_3\sigma_4\sigma_1\sigma_2$. Here, $\alpha$ was originally identified by \cite{HS} as the dilatation-minimizing 5-braid for the stratum $(3;1^5;\emptyset)$ on the 5-marked disk, which lifts to the stratum $(6;\emptyset;\emptyset)$ on $S$.

Using an argument as in the proof of the previous proposition, it suffices to check that none of the braids $\alpha$, $\Delta^2\alpha^{-1}$, $\Delta^2\alpha$, or $\Delta^4\alpha^{-1}$ close up to $T(3,5)$. For all except $\alpha$, one can use a self-linking number computation to confirm this, and one can easily perform an isotopy to see that $\widehat{\alpha}=T(2,3)$.
\end{proof}

\subsection{The stratum $(2;\emptyset;4^2)$}

This stratum has not yet been analyzed, so we will need to perform some train track map analysis. Our goal for this subsection is to prove:
\begin{subthm}\label{thm:244}
Every pseudo-Anosov map in the stratum $(2;\emptyset;4^2)$ has an interior fixed point.
\end{subthm}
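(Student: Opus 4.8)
The plan is to mirror the strategy outlined for the harder strata but exploit the extra rigidity coming from the 4-pronged singularities. By the Birman--Hilden correspondence, a pseudo-Anosov $\psi_h$ in the stratum $(2;\emptyset;4^2)$ lifts from a pseudo-Anosov 5-braid $\psi_\beta$ in the stratum $(1;1^5;4)$. By Theorem~\ref{thm:jointless}, since $\psi_\beta$ has an interior singularity, $\psi_\beta$ is carried by a standard jointless train track $\tau\subset D_5$. First I would pin down the combinatorial type of $\tau$: a standard jointless track in $(1;1^5;4)$ has five infinitesimal monogons (one around each marked point), one infinitesimal region with $4$ cusps (the interior $4$-prong), and one peripheral region with a single cusp. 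Counting switches, infinitesimal edges, and using that each monogon is jointless (exactly one adjacent real edge), one shows the real-edge complex is very constrained; I expect, after appealing to the folding-automata normalization of \cite{KLS}, \cite{HS}, \cite{CH} as in the outline, that there is essentially a \emph{unique} such track $\tau$ up to the relevant symmetries, with a small, explicit number of real edges (the excerpt's Figure~\ref{fig:jointstandard}, right, is exactly of this type).

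Next I would bring in the trace lemma. Suppose for contradiction that $\psi_h$ is FPF. Each real edge $e$ of $\tau$ incident to one of the five jointless monogons is subject to Corollary~\ref{lem:tracemon}: $e$ does not appear in $f_\beta(e)$. If \emph{every} real edge of $\tau$ is incident to some monogon, this forces $M(f_\beta)$ to be traceless. Combined with the Perron--Frobenius requirement from Proposition~\ref{prop:fix} and the cusp-preservation constraints of the Bestvina--Handel structure (infinitesimal edges map to infinitesimal edges, the $4$-cusped infinitesimal polygon maps to itself respecting cusps), one gets a finite system for the possible $f_\beta$. The key numerical observation I would push on: in the stratum $(1;1^5;4)$ the number of real edges is small (I expect $6$, matching $\mathrm{rk}\,H_1$ considerations for the $5$-punctured disk together with the one interior prong), and a traceless Perron--Frobenius transition matrix of that size with the forced block structure simply cannot have the correct dilatation/characteristic polynomial — equivalently, any candidate $f_\beta$ that survives the trace constraint fails to be a genuine train-track map carried geometrically (its image fails transversality, or it collapses a cusp). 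The cleanest route is probably: enumerate the finitely many edge-images compatible with (i) jointless-monogon tracelessness, (ii) infinitesimal-to-infinitesimal with cusp preservation, (iii) Perron--Frobenius, and check each fails either to be realized by a mapping class (Theorem~\ref{thm:ttMCG} uniqueness forces the braid, then its closure or self-linking number is wrong) or to be FPF after all (a fixed point appears at the end of some $f_h(e_i)$, exactly as in the Corollary's proof).

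The main obstacle I anticipate is Step~1: rigorously reducing to a single train track $\tau$. The folding-automata machinery of \cite{KLS,HS,CH} gives that every pseudo-Anosov in the stratum is obtained by folding from a standard track, but turning ``every pA folds down to $\tau$'' into ``it suffices to analyze $f_\beta$ on $\tau$ itself'' requires knowing the folding moves preserve the FPF property and the relevant conjugacy data — this is where I would lean hardest on the analogous arguments already carried out in \cite{FRW} for $(4;\emptyset;3^2)$, adapting them to the $4$-pronged case. Once $\tau$ is fixed, the rest is a bounded combinatorial search: I do not expect the trace lemma alone to finish the stratum (the outline explicitly notes no braid in $(1;1^5;4)$ lifts to an FPF map, while three survive in $(1;1^5;3^2)$), so the final blow in this stratum should be purely at the level of transition matrices — showing no traceless Perron--Frobenius $f_\beta$ on $\tau$ is compatible with all the Bestvina--Handel cusp constraints — which is exactly why $(2;\emptyset;4^2)$ is "relatively easy" compared to $(2;\emptyset;3^4)$.
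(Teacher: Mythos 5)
Your setup matches the paper's: reduce via Birman--Hilden to a pseudo-Anosov $5$-braid in the stratum $(1;1^5;4)$, observe that Theorem~\ref{thm:jointless} leaves a unique jointless standard track in this stratum (the paper's ``Jellyfish'' track, which has five real edges, not six), and invoke the trace lemma for jointless monogons so that no real edge appears in its own image. Up to that point you are on the paper's track, and your worry about ``reducing to a single $\tau$'' is actually a non-issue here: uniqueness of the jointless track in this stratum makes the reduction immediate, with no folding-automaton argument needed.

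The genuine gap is in your proposed endgame. You suggest finishing ``purely at the level of transition matrices,'' i.e.\ showing that no traceless Perron--Frobenius transition matrix is compatible with the cusp constraints. That cannot work: traceless Perron--Frobenius matrices of size $5$ abound, and the transition matrix forgets exactly the data that drives the contradiction --- the cyclic order in which an image path traverses edges, which side ($+$ or $-$) it passes each edge on, and the fact that the interior infinitesimal quadrilateral must rotate. The paper's proof is a path-level case analysis: assuming $f_\beta(r)$ starts at $b$ (the other starting positions being symmetric), it tracks the forced continuations of $f_\beta(g)$ and $f_\beta(y)$ letter by letter and shows each possible first letter leads either to an edge passing over itself (violating Corollary~\ref{lem:tracemon}) or to an image that spirals forever, leaving only $f_\beta(g)=f_\beta(y)=r^\circ$, which is impossible. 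None of this is visible in $M(f_\beta)$. Separately, your fallback of identifying the braid via Theorem~\ref{thm:ttMCG} and then checking its closure or self-linking number is misdirected for this statement: Theorem~\ref{thm:244} asserts that \emph{every} map in the stratum has an interior fixed point, independent of any $3$-manifold, so the contradiction must come from fixed-point combinatorics alone, not from ruling out $T(3,5)$ as a closure.
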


By the discussion in the outline (subsection \ref{sec:outline}), if $h\in\text{Mod}(S)$ is a pseudo-Anosov mapping class with FPF geometric representative $\psi_h$, then $h$ is the lift of a 5-braid $\beta$. If $\psi_h$ has singularity type $(2;\emptyset;4^2)$ then $\psi_\beta$ has singularity type $(1;1^5;4)$, by the discussion in Section \ref{sec:lift}. So, Theorem \ref{thm:244} reduces to the following:

\begin{prop}
Let $\beta$ be a pseudo-Anosov 5-braid whose geometric representative $\psi_\beta$ has singularity type $(1;1^5;4)$. Then, the lift $\psi_h:S\to S$ has an interior fixed point.
\end{prop}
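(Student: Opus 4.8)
The plan is to reduce the claim to a finite combinatorial check on train track maps, using the structural theorems assembled in Section \ref{sec:tt}. First I would invoke Theorem \ref{thm:jointless}: since $\psi_\beta$ lies in the stratum $(1;1^5;4)$, which has an interior singularity, $\psi_\beta$ is carried by a standard, jointless train track $\tau\subset D_5$. So every one of the five marked points sits in an infinitesimal monogon whose unique incident real edge is a side-swapping edge, and there is a single non-marked interior complementary region with four cusps. The next step is to pin down $\tau$ itself: using the folding-automaton techniques of \cite{KLS}, \cite{HS}, \cite{CH} — as invoked in the outline — one narrows the standard jointless tracks in this stratum down to (up to symmetry) a single candidate $\tau$, the rightmost track in Figure \ref{fig:jointstandard}. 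Its real edges are few in number (this is the point of jointlessness), and one reads off which edges are side-swapping from the monogon structure.

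With $\tau$ fixed, the heart of the argument is the trace lemma. Suppose toward a contradiction that $\psi_h$ has no interior fixed point. Then Corollary \ref{lem:tracemon} applies verbatim to every real edge $e$ of $\tau$ that is incident to a jointless monogon — and on the standard jointless track in stratum $(1;1^5;4)$, \emph{every} real edge meets such a monogon, because each of the five marked points contributes a monogon. Hence no real edge $e$ can appear in its own image path $f_\beta(e)$, i.e. the transition matrix $M(f_\beta)$ is traceless. I would then argue that a traceless Perron–Frobenius transition matrix of the size dictated by $\tau$ is impossible for a \emph{pseudo-Anosov} braid in this stratum: either directly, by enumerating the finitely many train track maps $f_\beta:\tau\to\tau$ compatible with the infinitesimal structure (images of infinitesimal edges are infinitesimal, cusp-preserving — the Bestvina–Handel constraints) and with $M(f_\beta)$ traceless and irreducible, and checking each fails to be a genuine folding-automaton state; or by a shorter Euler-characteristic / counting argument showing a traceless irreducible matrix forces an impossible image-length pattern given the four cusps.

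The main obstacle I expect is precisely that last step — turning ``$M(f_\beta)$ traceless'' into an outright contradiction. The trace lemma is one-directional: tracelessness is necessary for fixed-point-freeness, but showing \emph{no} traceless $f_\beta$ exists (rather than just that the FPF ones are rare) is where the stratum-specific work lives. Concretely I anticipate needing: (i) the exact edge set and switch/cusp combinatorics of the distinguished $\tau$; (ii) the list of admissible combinatorial types of $f_\beta$ on $\tau$, cut down using that infinitesimal edges map to infinitesimal edges and that $f_\beta$ must be a legal map (no backtracking, cusps preserved); and (iii) for each surviving candidate, either a traceless obstruction or, failing that, a direct check that the unique FPF-compatible $f_\beta$ (if any) does not lift to a pseudo-Anosov — equivalently that its lifted transition matrix $M(f_h)$ is not Perron–Frobenius, contradicting Proposition \ref{prop:fix}. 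I would organize the write-up so that (i)–(ii) are dispatched by a figure plus a short paragraph, and (iii) is the one genuinely computational lemma; the payoff is that, unlike the harder stratum $(2;\emptyset;3^4)$, here there is simply no candidate braid left, so no fractional-Dehn-twist or $\widehat\beta=T(3,5)$ bookkeeping is needed at the end.
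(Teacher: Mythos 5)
Your setup matches the paper's: invoke Theorem \ref{thm:jointless} to get a standard jointless track, observe that in the stratum $(1;1^5;4)$ there is a unique such track (the ``Jellyfish'' track of Figure \ref{fig:jellyfishtrackex}, which is the same as the right-hand track of Figure \ref{fig:jointstandard}; no folding-automaton argument is needed here, unlike in the $(1;1^5;3^2)$ case), and apply Corollary \ref{lem:tracemon} to conclude that no real edge appears in its own image. But the decisive step is missing, and the ways you propose to supply it are not how the contradiction actually arises. It is not true in the abstract that a traceless irreducible transition matrix of the relevant size is impossible, and an Euler-characteristic or pure matrix-counting argument will not close the case; nor does one need to pass to the lift and test whether $M(f_h)$ is Perron--Frobenius. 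What the paper does instead is a forced-path analysis on the Jellyfish track: the interior infinitesimal quadrilateral must rotate (else $f_\beta(r)$ starts at $r$), and then, assuming say $f_\beta(r)$ starts at $b$, one tracks the only legal continuations of $f_\beta(g)$ and $f_\beta(y)$ (Lemmas \ref{lem:4trackgr+yr-} and \ref{lem:4trackgr-yr+}). Each branch either forces some edge over itself (violating the trace corollary), forces an image path to spiral indefinitely, or forces $f_\beta(g)=f_\beta(y)=r^\circ$, which is impossible; the remaining rotations are symmetric.

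So the gap is concrete: steps (ii)--(iii) of your plan, which you correctly flag as the main obstacle, are the entire content of the proposition, and the mechanisms you offer for them (tracelessness alone, or a Perron--Frobenius check on the lift) would not by themselves yield the contradiction. The trace constraint is necessary but far from sufficient; the argument needs the local combinatorics of the quadrilateral and the monogons to propagate constraints letter by letter along the image paths until every branch dies. If you carry out that propagation (which is finite and entirely mechanical once set up in the $e^{\pm}$/$e^{\circ}$ notation), your outline becomes the paper's proof.
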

\begin{rem}
Note that for this stratum, we will actually prove that there are no FPF maps, whereas there are FPF maps in the other three strata.
\end{rem}
The proposition will follow from an analysis of train track maps on a special train track in the stratum $(1;1^5;4)$. In this stratum, there is a unique jointless standard train track (up to isotopy): the ``Jellyfish" track, shown in Figure \ref{fig:jellyfishtrackex}. So, by Theorem \ref{thm:jointless}, it suffices to check braids carried by this canonical track.

\begin{figure}
    \labellist
    \pinlabel $r$ at 0 60
    \pinlabel $b$ at 25 110
    \pinlabel $p$ at 100 110
    \pinlabel $g$ at 125 60
    \pinlabel $y$ at 167 60
    \endlabellist
    \centering
    \includegraphics[width=\textwidth]{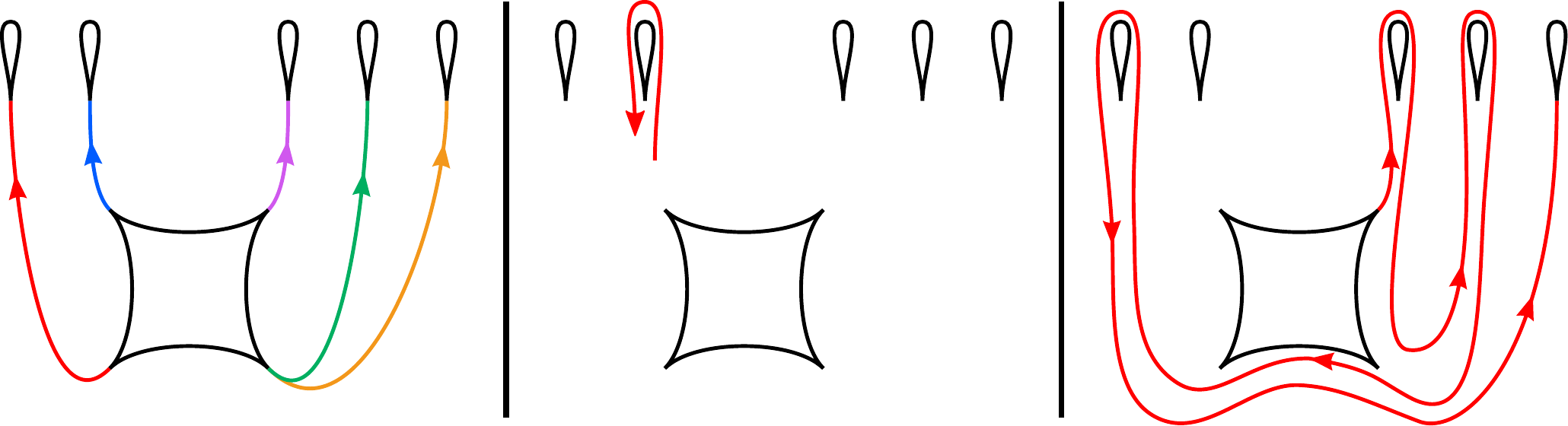}
    \caption{Left: the jellyfish track, with labeled edges. Center: $b^+$ appears in $f_\beta(r)$. Right: $f_\beta(r)=p^-g^-r^+y^\circ$.}
    \label{fig:jellyfishtrackex}
\end{figure}

For the rest of this section, $\tau$ will denote the Jellyfish track shown in \ref{fig:jellyfishtrackex}. The real edges are labeled $r$, $b$, $p$, $g$, and $y$ as in the figure, and each is oriented toward the marked points. The data $(\beta,\psi_\beta,\tau,f_\beta)$ will describe a pseudo-Anosov braid carried by $\tau$, which lifts to a map $(h,\psi_h,\widetilde{\tau},f_h)$ on $S$. Note that every real edge $e$ of $\tau$ ends at a jointless 1-marked monogon. In particular, Corollary \ref{lem:tracemon} applies, so if $\psi_h$ is FPF then $e$ never appears in $f_\beta(e)$.

For any real edge $e$, we may write $$f_\beta(e)=e(1)\cdot\overline{e(1)}\cdot e(2)\cdot\overline{e(2)}\cdot~...~\cdot e(l-1)\cdot\overline{e(l-1)}\cdot e(l)$$ where each $e(i)$ is a real edge of $\tau$, and $\overline{e(i)}$ denotes $e(i)$ with orientation reversed. To simplify and specify our notation to the relevant analysis, we will instead write $$f_\beta(e)=e(1)^{\pm}e(2)^{\pm}...e(l-1)^{\pm}e(l)^\circ$$ Here, the $\pm$ in $e(i)^\pm$ describes which way the image $\psi_\beta(e)$ traverses $e(i)$, as in the center of Figure \ref{fig:jellyfishtrackex}, and $e(l)^\circ$ simply means that $f_\beta(e)$ ends at $e(l)$. See the right of Figure \ref{fig:jellyfishtrackex} for an example of this notation. We will also write, e.g., $f_\beta(b)=...r^{+\circ}...$ to indicate that, at that particular instance of $r$ in $f_\beta(b)$, we aren't sure whether $f_\beta(b)$ passes $r$ on the right or stops there.

To begin our analysis, note that the interior infinitesimal quadrilateral of $\tau$ must rotate: if it doesn't, then, for example $f_\beta(r)$ would start at $r$, immediately violating the trace lemma. This rotation is determined by the first letter in $f_\beta(r)$. First, suppose that $f_\beta(r)$ starts at $b$.

\begin{figure}
    \centering
    \includegraphics[width=\textwidth]{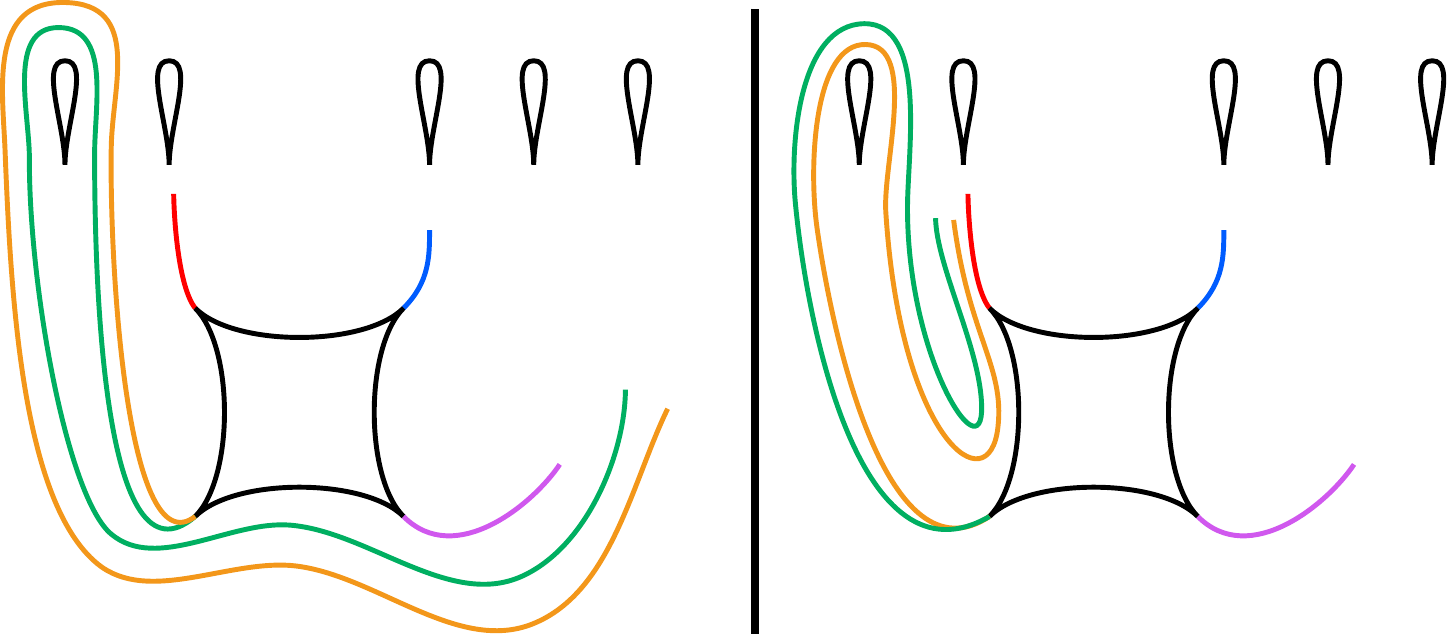}
    \caption{Visual aids for the proof of Lemma \ref{lem:4trackgr+yr-}.}
    \label{fig:4trackgr+yr-}
\end{figure}

\begin{lem}\label{lem:4trackgr+yr-}
If $\psi_h$ is FPF and $f_\beta(r)$ starts at $b$, then $f_\beta(g)$ does not start with $r^+$ and $f_\beta(y)$ does not start with $r^-$.
\end{lem}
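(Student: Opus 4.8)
The statement is combinatorial, and I would prove it by contradiction, exploiting the rigidity of a train track map induced by a pseudo-Anosov together with the trace lemma for jointless monogons (Corollary \ref{lem:tracemon}), which is where the fixed-point-free hypothesis enters.

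\textbf{Step 1: extract the rotation.} Since $\psi_\beta$ is pseudo-Anosov and $\tau$ is standard, $f_\beta$ sends the central infinitesimal quadrilateral to itself as an infinitesimal polygon with four cusps, hence by an honest rotation, which is nontrivial by the observation preceding the lemma (a trivial rotation makes $f_\beta(r)$ begin at $r$, violating Corollary \ref{lem:tracemon}). The hypothesis that $\psi_\beta(r)$ first exits the quadrilateral along $b$ pins this rotation down, and hence prescribes, for every real edge $e$ leaving the quadrilateral, the cusp at which $\psi_\beta(e)$ first exits. Combined with the facts that $f_\beta$ respects the cyclic ordering of edges at each switch and permutes the five one-cusped infinitesimal monogons, this cuts the list of possible first letters of $f_\beta(g)$ and of $f_\beta(y)$ down to a short, explicit collection.

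\textbf{Step 2: discard the forbidden starts.} It then remains to rule out $r^+$ from the list for $f_\beta(g)$, and $r^-$ from the list for $f_\beta(y)$; note that the \emph{opposite} signs are genuinely possible, so the argument must be sensitive to exactly which cusp at the base of $r$ the image path uses. With the aid of Figure \ref{fig:4trackgr+yr-}, I expect this to reduce to a finite check on the local pictures at the vertices of the quadrilateral: beginning $f_\beta(g)$ with $r^+$ (or $f_\beta(y)$ with $r^-$) would either require an illegal turn in the image train path, contradicting that $f_\beta$ is a train track map; or force the rotation to disagree with the one already fixed by $f_\beta(r)$; or propagate, after one more application of $f_\beta$, to put $r$ inside $f_\beta(r)$ --- or make some real edge incident to a jointless monogon end on itself --- which Corollary \ref{lem:tracemon} forbids. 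The two cases are interchanged by the left-right symmetry of the Jellyfish track swapping $g$ with $y$ and the two sides of $r$'s base, so only one of them needs to be carried out in full.

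The main obstacle is bookkeeping rather than ideas: one must carefully enumerate the legal ways a train path can cross the central quadrilateral consistently with the determined rotation, and confirm in each sub-case that the forbidden starting letter cannot appear. I would organize this by first drawing the quadrilateral together with its rotation and the constrained exit cusps, and then checking the handful of resulting local configurations against the train-track-map axioms and the trace lemma; I do not expect any ingredient beyond what is recalled in Sections \ref{sec:tt} and \ref{sec:lift}.
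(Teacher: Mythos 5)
Your Step 1 matches the paper's setup (the paper establishes the rotation of the central quadrilateral in the paragraph preceding the lemma, and the hypothesis that $f_\beta(r)$ starts at $b$ does force both $f_\beta(g)$ and $f_\beta(y)$ to involve $r$ first). But Step 2 is where the entire content of the lemma lives, and there you have a plan rather than a proof, with two genuine gaps. First, the engine of the paper's argument is never articulated in your proposal: because $\psi_\beta(g)$ and $\psi_\beta(y)$ lie in the fibered neighborhood transverse to the leaves and emanate from adjacent cusps, they are forced to run \emph{parallel} to one another (one nested inside the other) until a cusp lets them diverge. This is what gives ``if $f_\beta(g)$ starts with $r^+$, then so does $f_\beta(y)$,'' and it is what closes the $r^+$ case in two lines: trace forbids $f_\beta(g)=r^+g^{\pm\circ}\dots$, so $f_\beta(g)=r^+y^{\pm\circ}\dots$, and nesting then drags $f_\beta(y)$ over $y$. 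None of the mechanisms you list (illegal turns, inconsistent rotation, propagation under a further application of $f_\beta$) is the one actually used.

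Second, the $r^-$ case is not ``a finite check on the local pictures.'' There the paths can in principle wind around the track arbitrarily many times; the paper rules out each possible termination at every stage of the spiral by playing the nested images of $g$ and $y$ off against $f_\beta(r)$ (which starts at $b$): stopping at $b$, or returning to $r$, would force $f_\beta(r)=b^+r^{\pm\circ}\dots$, violating trace, so the paths are pushed onward to $p$ and then around again, until one of $f_\beta(g)$ or $f_\beta(y)$ must pass over itself. Your proposal never invokes the image of $r$ beyond fixing the rotation, so you have no way to force the spiral. Finally, the symmetry you invoke to do only one of the two cases is not available: the Jellyfish track admits no reflection exchanging $g$ with $y$ while preserving the hypothesis that $f_\beta(r)$ starts at $b$, and indeed the paper's two sub-arguments are structurally quite different (one is a two-step trace contradiction, the other an unbounded forcing argument). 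You would need to carry out both cases in full.
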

\begin{proof}
First, if $f_\beta(g)$ starts with $r^+$, then so does $f_\beta(y)$. The next letter in each of $f_\beta(g)$ and $f_\beta(y)$ must be either $g$ or $y$, as in the left of Figure \ref{fig:4trackgr+yr-}. We know that $f_\beta(g)\neq r^+g^{\pm\circ}...$ by trace, so we must have $f_\beta(g)=r^+y^{\pm\circ}...$, but then $f_\beta(y)=r^+y^{\pm\circ}...$

Next, if $f_\beta(y)$ starts with $r^-$, then so does $f_\beta(g)$. The next letter in both $f_\beta(g)$ and $f_\beta(y)$ is $b$, as in the right of Figure \ref{fig:4trackgr+yr-}. We can see that if $f_\beta(g)=r^-b^{+\circ}...$, or if $f_\beta(g)=r^-b^-r^{\pm\circ}...$ then $f_\beta(r)=b^+r^{\pm\circ}...$. A similar argument holds for $f_\beta(y)$, so we must have $f_\beta(g)=r^-b^-p^{\pm\circ}...$ and $f_\beta(y)=r^-b^-p^{\pm\circ}...$ The same type of argument holds for all of the following letters in $f_\beta(g)$ and $f_\beta(y)$, so it must be that both $f_\beta(g)$ and $f_\beta(y)$ start with $r^-b^-p^-$ and then traverse either $g$ or $y$ next. From here, it's easy to see that either $f_\beta(g)$ passes over $g$ or $f_\beta(y)$ passes over $y$.
\end{proof}

\begin{figure}
    \centering
    \includegraphics{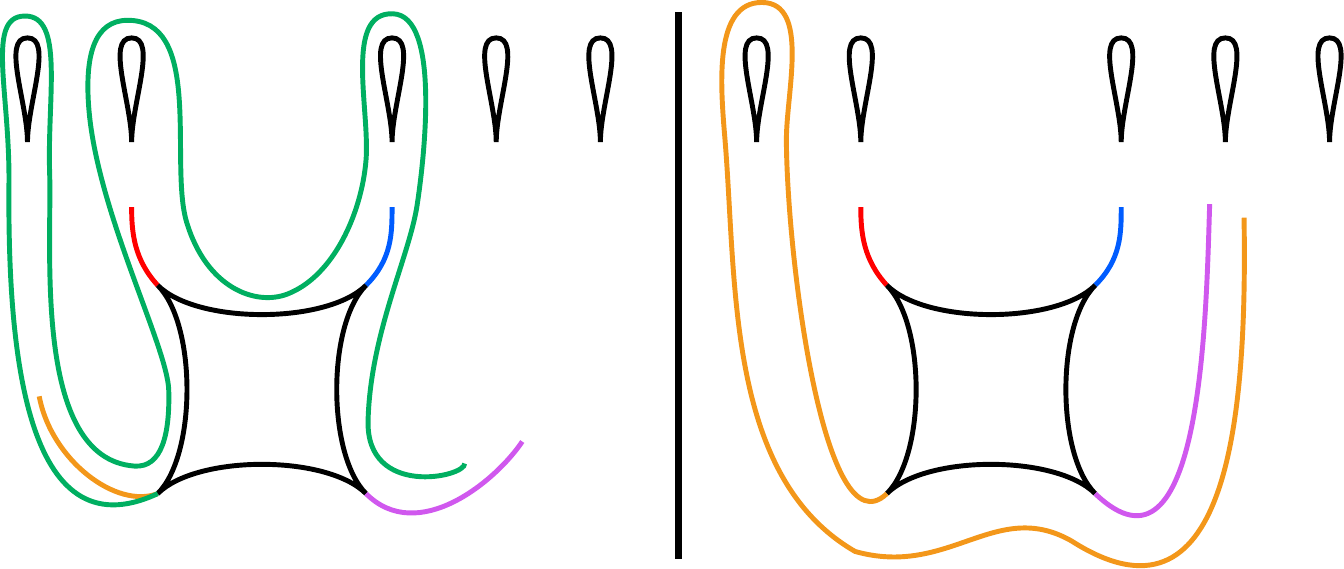}
    \caption{Visual aids for the proof of Lemma \ref{lem:4trackgr-yr+}.}
    \label{fig:4trackgr-yr+}
\end{figure}

\begin{lem}\label{lem:4trackgr-yr+}
If $\psi_h$ is FPF and $f_\beta(r)$ starts at $b$, then $f_\beta(g)$ does not start with $r^-$ and $f_\beta(y)$ does not start with $r^+$.
\end{lem}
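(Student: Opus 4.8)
This lemma is the orientation-reversed counterpart of Lemma~\ref{lem:4trackgr+yr-}, and the plan is to prove it by a parallel argument: assume $\psi_h$ is FPF and $f_\beta(r)$ starts at $b$, suppose toward a contradiction that $f_\beta(g)$ starts with $r^-$ or that $f_\beta(y)$ starts with $r^+$, and then track the two image paths $f_\beta(g)$ and $f_\beta(y)$ in parallel until the trace lemma forces an edge to appear in its own image. As in the previous proof, the first step is a short picture argument (Figure~\ref{fig:4trackgr-yr+}): since $g$ and $y$ emanate from adjacent switches of the interior quadrilateral and the rotation direction of that quadrilateral is fixed by the assumption that $f_\beta(r)$ starts at $b$, the images $\psi_\beta(g)$ and $\psi_\beta(y)$ are forced to leave the quadrilateral the same way, so $f_\beta(g)$ starts with $r^-$ if and only if $f_\beta(y)$ does, and likewise for $r^+$. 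Thus it suffices to rule out each of the two scenarios in which both images begin with the same letter, either $r^-$ or $r^+$.

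Take first the case where both $f_\beta(g)$ and $f_\beta(y)$ begin with $r^-$. Because $f_\beta(r)$ starts at $b$, the rotation fixes which edge can follow an occurrence of $r^-$: it must be $b$, traversed in a definite direction. Propagating this constraint — at each successive step the geometry of $\tau$ together with the fixed rotation leave, up to the harmless possibility of stopping, a single admissible continuation, since any other choice either makes some edge reappear in its own image or forces $f_\beta(r)$ to begin with the wrong letter — pins $f_\beta(g)$ and $f_\beta(y)$ together along a common initial segment. That segment can only be extended by traversing $g$ or $y$, so $g$ appears in $f_\beta(g)$ or $y$ appears in $f_\beta(y)$, contradicting Corollary~\ref{lem:tracemon}, since every real edge of $\tau$ is incident to a jointless monogon. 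The case where both images begin with $r^+$ is entirely analogous, with the edges along the common segment changed; a stop is never an escape, since $f_\beta(e)$ ending on $e$ for $e \in \{g,y\}$ would place an interior fixed point of $\psi_h$ at the endpoint of the lift $f_h(e_i)$.

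I expect the real work, exactly as in Lemma~\ref{lem:4trackgr+yr-}, to lie in the step-by-step case analysis: verifying that at every stage the Jellyfish track plus the fixed rotation direction truly leaves a single admissible continuation (modulo stopping), so that the two paths are pinned together until the forced self-traversal. This bookkeeping is the main obstacle, but it is not a genuinely new idea — no geometric input beyond Figures~\ref{fig:4trackgr+yr-} and~\ref{fig:4trackgr-yr+} should be required, since the analysis in this stratum is entirely combinatorial. Combined with Lemma~\ref{lem:4trackgr+yr-}, the conclusion is that when $f_\beta(r)$ starts at $b$, neither $f_\beta(g)$ nor $f_\beta(y)$ can begin with $r$ in either orientation.
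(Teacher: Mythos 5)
Your proof hinges on the claim that ``$f_\beta(g)$ starts with $r^-$ if and only if $f_\beta(y)$ does, and likewise for $r^+$,'' and that claim is not justified --- indeed it is too strong. The two image strands $\psi_\beta(g)$ and $\psi_\beta(y)$ emanate from the image of a \emph{common} switch of the infinitesimal quadrilateral and are nested, so the geometry only gives one-directional implications: the strand passing over $r$ on a given side can trap the other, but the trapped strand may always simply terminate at $r$ (i.e.\ $f_\beta(e)=r^\circ$) regardless of what its partner does. This is exactly why Lemma \ref{lem:4trackgr+yr-} asserts only ``$f_\beta(g)$ starts with $r^+$ implies so does $f_\beta(y)$'' and ``$f_\beta(y)$ starts with $r^-$ implies so does $f_\beta(g)$,'' and why the section concludes that after both lemmas the only surviving option is $f_\beta(g)=f_\beta(y)=r^\circ$. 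A warning sign you should have caught: if your iff were true, the present lemma would be an immediate corollary of Lemma \ref{lem:4trackgr+yr-} (e.g.\ $f_\beta(g)=r^-\cdots$ would force $f_\beta(y)=r^-\cdots$, which is already excluded), so no new argument would be needed at all.

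As a result, your plan never addresses the configurations this lemma actually has to rule out, namely $f_\beta(g)=r^-\cdots$ with $f_\beta(y)=r^\circ$, and $f_\beta(y)=r^+\cdots$ with $f_\beta(g)=r^\circ$. In those configurations there is no second path to pin alongside the first; one must follow the single image path as it is forced all the way around the track. The paper shows $f_\beta(g)=r^-b^-p^-y^-r^-b^-p^-\cdots$ must keep spiraling (any deviation or premature stop makes some \emph{other} edge, e.g.\ $p$, pass over itself), and since a finite train path cannot spiral forever it eventually passes over $g$; dually, $f_\beta(y)=r^+g^+p^+b^+r^+\cdots$ spirals around the outside until it must end at $y$. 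The $r^+$ case moreover requires a genuine sub-case analysis of $f_\beta(p)$ (distinguishing $f_\beta(y)=r^+g^-\cdots$ from $r^+g^+\cdots$), which your ``entirely analogous'' does not supply. So while the toolkit you invoke (the trace lemma, Corollary \ref{lem:tracemon}, forced continuations) is the right one, the operative mechanism here is a single spiraling image path, not the paired tracking of Lemma \ref{lem:4trackgr+yr-}, and the proposal as written has a genuine gap.
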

\begin{proof}
If $f_\beta(g)$ starts with $r^-$, then a very similar argument to that of the previous lemma shows that $f_\beta(g)=r^-b^-p^-y^{\pm\circ}...$ Note that we must have $f_\beta(g)=r^-b^-p^-y^-r^{\pm\circ}...$, because otherwise $f_\beta(p)$ passes over $p$. So, we must have $f_\beta(g)=r^-b^-p^-y^-r^-b^-p^-...$, as depicted on the left of Figure \ref{fig:4trackgr-yr+}. But, now, $f_\beta(g)$ either eventually passes over $g$, or continues to spiral around the track.

If $f_\beta(y)$ starts with $r^+$, then we must have $f_\beta(y)=r^+g^{\pm\circ}...$, as shown on the right of Figure \ref{fig:4trackgr-yr+}. If $f_\beta(y)=r^+g^{-\circ}...$ then $f_\beta(p)=g^-r^-b^{\pm\circ}...$ The argument here is now similar to the previous ones: if $f_\beta(p)=g^-r^-b^{+\circ}...$ or passes $r$ next, then $f_\beta(r)$ passes over $r$. So, we must have $f_\beta(p)=g^-r-b^-p^{\pm\circ}...$

It follows that $f_\beta(y)=r^+g^+p^{\pm\circ}$. A very similar argument applies for the next few letters, so $f_\beta(y)=r^+g^+p^+b^+r^+...$ and either eventually stops at $y$ or continues to spiral around the outside of the track.
\end{proof}

We can conclude from the lemmas above that $f_\beta(r)$ cannot start at $b$: if it did, then we would have $f_\beta(g)=f_\beta(y)=r^\circ$, but both $f_\beta(g)$ and $f_\beta(y)$ cannot end at $r$. The next cases for where $f_\beta(r)$ starts are almost identical. For example, if $f_\beta(r)$ starts at $p$, then $f_\beta(g)$ and $f_\beta(y)$ both start at $b$. One can then apply the analogous arguments as above to show that in this case $f_\beta(g)=f_\beta(y)=b^\circ$.

\section{The stratum $(2;\emptyset;3^4)$}\label{sec:23333}


\begin{figure}[htp]
    \labellist
    \large
    \pinlabel $\sigma_1^{-1}\sigma_2^{-1}\sigma_3^{-1}$ at 303 720
    \pinlabel $\sigma_4$ at 212 660
    \pinlabel $\sigma_1^{-1}$ at 180 572
    \pinlabel $\sigma_4\sigma_3\sigma_2$ at 432 575
    \pinlabel $\sigma_1^{-1}\sigma_2^{-1}$ at 180 467
    \pinlabel $\sigma_4\sigma_3$ at 400 317
    \pinlabel $\sigma_4$ at 175 235
    \pinlabel $\sigma_1^{-1}\sigma_2^{-1}\sigma_3^{-1}$ at 220 125
    \pinlabel $\sigma_1^{-1}$ at 375 123
    \pinlabel $\sigma_4\sigma_3\sigma_2$ at 320 20
    \endlabellist
    \centering
    \includegraphics[width=0.95\textwidth]{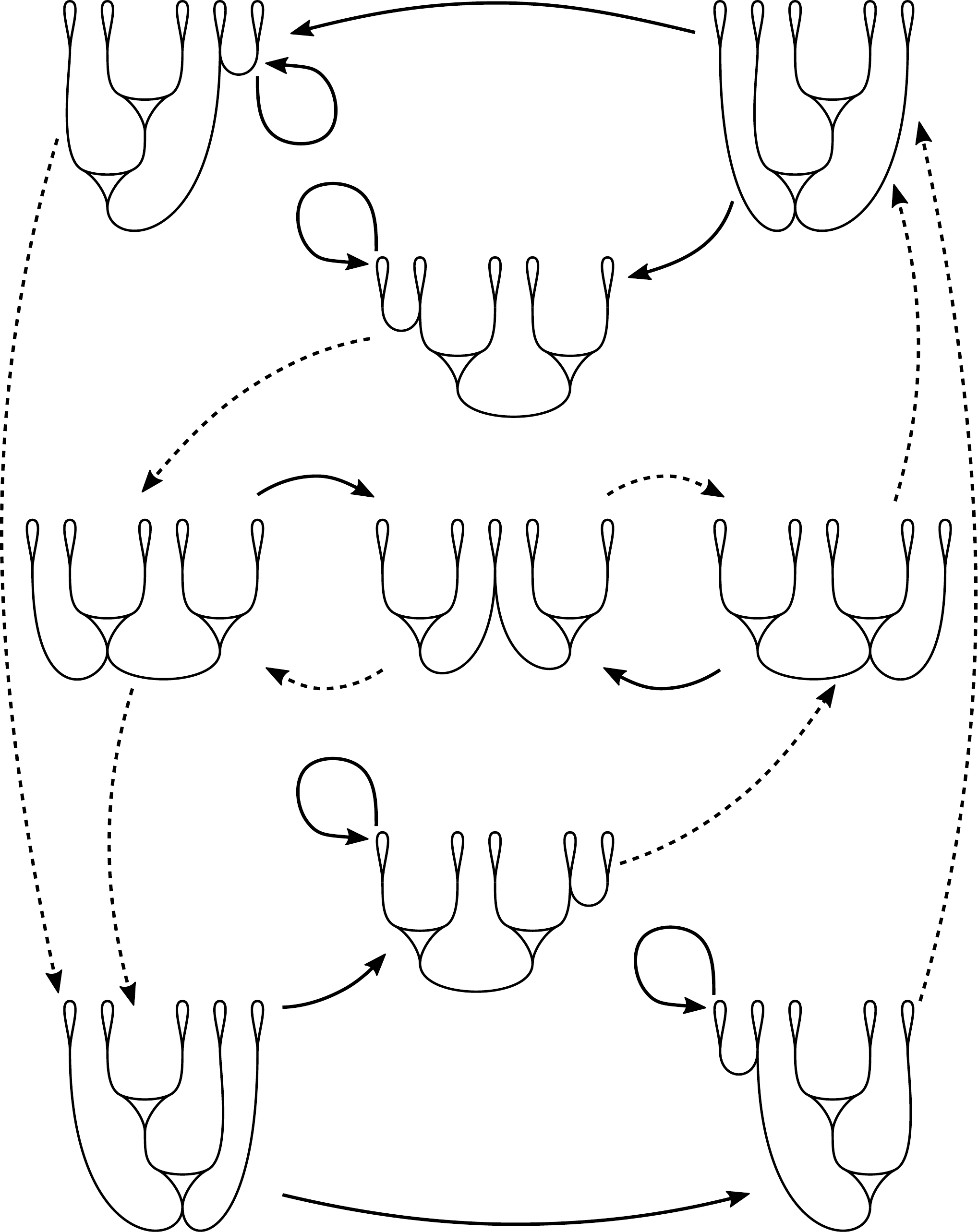}
    \caption{The folding automaton for the stratum $(1;1^5;3^2)$ on $D_5$. The dashed edges induce trivial braid words. Any circuit around the outside of the automaton yields a reducible braid.}
    \label{fig:automaton}
\end{figure}

There is one more stratum to consider for the proof of Theorem \ref{thm:FPF}. This last stratum is quite a bit more complicated than the previous ones. Here is what we will prove:

\begin{subthm}\label{thm:2-34}
Let $h\in\text{Mod}(S)$ be a pseudo-Anosov mapping class with geometric representative $\psi_h$. Suppose that $\psi_h$ is FPF and has singularity type $(2;\emptyset;3^4)$. Then, $(S,h)$ is not an open book decomposition for $\cal{P}$.
\end{subthm}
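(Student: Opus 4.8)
### Proof proposal

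The plan is to mirror the structure of the $(2;\emptyset;4^2)$ case (Theorem \ref{thm:244}), but with the crucial difference that here \emph{FPF maps do exist}: the analysis will not terminate in a contradiction at the train-track level, but instead produce a short explicit list of candidate braids, which we then eliminate using the two constraints available for $\cal{P}$, namely $|c(\beta)|<2$ and $\widehat\beta=T(3,5)$. First I would reduce, via the Birman--Hilden discussion of Section \ref{sec:lift}, to studying pseudo-Anosov $5$-braids $\beta$ with $\psi_\beta$ in the stratum $(1;1^5;3^2)$ on $D_5$ whose lift $\psi_h$ is FPF; by Theorem \ref{thm:jointless} such a $\beta$ is carried by a jointless standard train track. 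The first real step is therefore a finiteness statement: using the folding automaton for $(1;1^5;3^2)$ depicted in Figure \ref{fig:automaton}, together with the fact that circuits around the outside of the automaton give reducible (hence non-pseudo-Anosov) braids, I would argue that up to isotopy it suffices to analyze train track maps $f_\beta:\tau\to\tau$ on a \emph{single} canonical jointless standard track $\tau$ in this stratum.

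Next comes the combinatorial heart: on this fixed $\tau$, enumerate the possible train track maps $f_\beta$ subject to the constraint that $\psi_h$ is FPF. The key tool is the trace lemma for jointless monogons (Corollary \ref{lem:tracemon}): every real edge $e$ incident to a jointless monogon cannot appear in $f_\beta(e)$, and more refined side-swapping-parity constraints come from the full trace lemma (Lemma \ref{lem:trace}). As in the proof of Theorem \ref{thm:244}, one tracks how the interior infinitesimal triangles must rotate, propagates the edge-path constraints letter by letter, and uses the transition-matrix Perron--Frobenius condition to keep only pseudo-Anosov candidates. Unlike the $4^2$ case, this does not collapse to nothing; the outline tells us the outcome is exactly three candidate train track maps, giving braids $\beta_1,\beta_2,\beta_3$, each unique up to conjugation and powers of the full twist $\Delta^2$. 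I expect this enumeration to require a bit of genuine geometric input beyond pure combinatorics (the outline flags this), probably to rule out certain spiralling/periodic-looking configurations or to pin down the infinitesimal cusp data that the automaton leaves ambiguous.

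Finally, with the candidates $\Delta^{2k}\beta_i^{\pm1}$ in hand, I would eliminate them exactly as in Theorems \ref{thm:433} and \ref{thm:6}. Since $\cal{P}$ is an L-space, $|c(K)|<1$, hence $|c(\beta)|<2$; computing $c(\beta_i)$ (which, being built from a positive-ish factorization plus twists, should be of the form $k\pm\tfrac12$ or similar) cuts the family $\Delta^{2k}\beta_i^{\pm1}$ down to finitely many braids. For each survivor I would check that its closure is not $T(3,5)$ using the standard obstructions: self-linking number versus $\overline{sl}(T(3,5))=7$, the determinant ($\det T(3,5)=1$), the Alexander polynomial, or an explicit identification of the closure as a torus or pretzel knot. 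Since $\widehat{\beta^{-1}}$ is the mirror of $\widehat\beta$, it suffices to handle one sign. I anticipate the main obstacle is \textbf{the train track enumeration in Step 2}: the $(1;1^5;3^2)$ stratum has more real edges and a less rigid infinitesimal structure than the Jellyfish track, so the case analysis branches much more, and the "additional geometric input" needed to close off the stray branches is where the argument will be most delicate. The concluding self-linking/determinant computations, by contrast, should be routine once the three braids are written down explicitly.
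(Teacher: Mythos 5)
Your proposal follows essentially the same route as the paper: reduce via Birman--Hilden to the stratum $(1;1^5;3^2)$, use the folding automaton plus the reducibility of the outer circuits to isolate a single canonical jointless track (the Camel track), run the trace-lemma case analysis to arrive at exactly the three candidate braids $\beta_1,\beta_2,\beta_3$ up to conjugation and full twists, and eliminate $\Delta^{2k}\beta_i^{\pm1}$ using $|c(\beta)|<2$ and a determinant computation against $\widehat\beta=T(3,5)$. The only minor discrepancies are in anticipated details (the paper finds $c(\Delta^{2k}\beta_i^{\pm1})=k$ rather than $k\pm\tfrac12$, and the ``extra geometric input'' turns out to be the analysis of how $\psi_h$ permutes the four lifted 3-pronged singularities compatibly with $\iota$), neither of which changes the structure of the argument.
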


\begin{figure}[t]
    \labellist
    \large
    \pinlabel $\text{The Enoki}$ at 183 240
    \pinlabel $\text{The Camel}$ at 183 80
    \endlabellist
    \centering
    \includegraphics[width=0.8\textwidth]{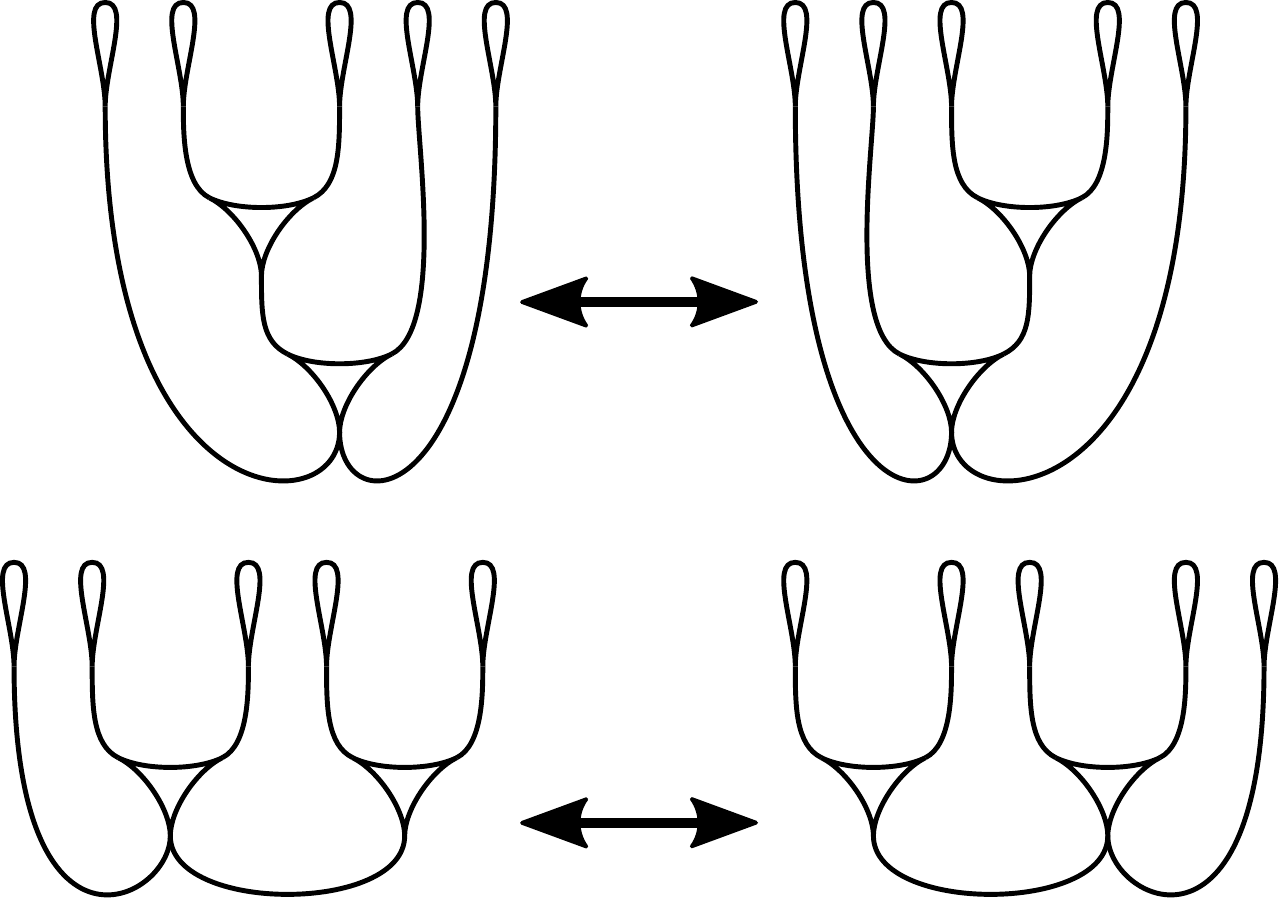}
    \caption{The two pairs of jointless tracks in the stratum $(1;1^5;3^2)$. Each pair is preserved under a horizontal reflection of the plane. Every pseudo-Anosov map is carried by one of the Camel tracks.}
    \label{fig:33trackpairs}
\end{figure}

Together with Theorems \ref{thm:433}, \ref{thm:6}, and \ref{thm:244}, Theorem \ref{thm:2-34} will complete the proof of Theorem \ref{thm:FPF} as explained in the outline (subsection \ref{sec:outline}), which implies Theorem \ref{thm:lspace} as explained in subsection \ref{sec:floer}. So, the rest of the paper will comprise the proof of Theorem \ref{thm:2-34}.

Any $h$ from Theorem \ref{thm:2-34} is the lift of a pseudo-Anosov braid in the stratum $(1;1^5;3^2)$. We will show that there is a canonical track that carries all pseudo-Anosov maps in this stratum, and then check train track maps on that track.

\begin{prop}\label{prop:33track}
Any pseudo-Anosov map (or its reverse) in the stratum $(1;1^5;3^2)$ is conjugate to one carried by the Camel track on the right in Figure \ref{fig:33trackpairs}.
\end{prop}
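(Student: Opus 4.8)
The plan is to reduce the infinitely many standard jointless tracks in the stratum $(1;1^5;3^2)$ to the single Camel track by exploiting the rigidity of the infinitesimal structure together with the folding automaton of Figure \ref{fig:automaton}. First I would recall from Theorem \ref{thm:jointless} that any pseudo-Anosov $\psi_\beta$ with an interior singularity (here there are two $3$-pronged interior singularities) is carried by a standard, jointless track. So it suffices to enumerate, up to isotopy of $D_5$, the standard jointless tracks in this stratum, and then show that each of them folds down onto (equivalently, is carried by, after passing to the folding automaton) the Camel track. The combinatorial constraint is strong: a standard jointless track must have a monogon around each of the five marked points, one peripheral region with a single cusp, and the remaining complementary regions must be infinitesimal polygons whose cusp counts match the singularity data $(1;1^5;3^2)$ — so two infinitesimal triangles (the two $3$-pronged interior singularities) and the peripheral $1$-gon, with the five marked-point monogons being the only non-infinitesimal non-peripheral regions. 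I would first argue that this forces the underlying infinitesimal graph to be one of a short explicit list, and that after accounting for the horizontal-reflection symmetry noted in Figure \ref{fig:33trackpairs} there are exactly two combinatorial types of jointless tracks: the ``Enoki'' pair and the ``Camel'' pair.

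Next I would invoke the folding-automaton machinery of \cite{KLS}, \cite{HS}, \cite{CH}: the vertices of the automaton in Figure \ref{fig:automaton} are precisely the standard tracks in the stratum (up to isotopy), and every pseudo-Anosov braid in the stratum corresponds to a cyclic loop in this automaton. The key structural facts I would extract are: (i) any loop that stays ``around the outside'' of the automaton gives a reducible — not pseudo-Anosov — braid, as the caption asserts, so a pseudo-Anosov representative's loop must pass through the central part of the automaton; and (ii) the Camel track sits at (or is forced to appear along) every such central loop. Concretely, I would show that the Enoki tracks only occur on the outer cycle of the automaton, so a pseudo-Anosov loop that visits an Enoki vertex must also visit a Camel vertex; conjugating the braid by an appropriate power of the cyclic word then moves the loop so that it is based at a Camel vertex, which means the (conjugated) braid is carried by the Camel track. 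The phrase ``or its reverse'' handles the horizontal-reflection ambiguity: the reflection swaps the two tracks in each pair and conjugates $\psi_\beta$ to $\psi_\beta^{-1}$-type data, so after possibly replacing $\beta$ by its reverse we may assume it is carried by the specific Camel track drawn on the right of Figure \ref{fig:33trackpairs}.

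The main obstacle I expect is step (i)–(ii): genuinely verifying that the only jointless tracks are the Enoki and Camel pairs, and that the Enoki tracks cannot support a pseudo-Anosov loop without the loop also passing through a Camel vertex. This requires actually computing the folding maps (the edge labels $\sigma_i^{\pm1}$ in Figure \ref{fig:automaton}) between the relevant tracks and checking which compositions of folds return to a given track — i.e. reading off the loops in the automaton — and then checking Perron–Frobenius/irreducibility (via Proposition \ref{prop:fix}) of the resulting transition matrices to separate the pseudo-Anosov loops from the reducible ones. The bookkeeping is finite but delicate: one must be careful that ``standard'' and ``jointless'' are preserved (or can be restored by isotopy) under the folds, and that no jointless track has been missed because it only appears after an isotopy that is not manifest in the chosen planar pictures. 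I would organize this as a lemma enumerating the tracks, a lemma describing the automaton's loop structure, and then a short deduction combining them with Theorem \ref{thm:jointless}.
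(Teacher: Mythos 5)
Your proposal follows the same route as the paper: reduce to jointless standard tracks via Theorem \ref{thm:jointless}, observe that the only such tracks are the Enoki and Camel pairs (with the reflection symmetry accounting for ``or its reverse''), and then use the Ham--Song folding automaton to argue that any loop avoiding the Camel tracks stays on the outer cycle and therefore represents a reducible braid. This is exactly the paper's argument in outline.

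The one place where your plan would not go through as written is the verification that the outer loops are reducible. You describe this as a ``finite but delicate'' bookkeeping task, to be settled by computing transition matrices and testing the Perron--Frobenius condition via Proposition \ref{prop:fix}. But the outer loops form an \emph{infinite} family: each circuit of the outer cycle contributes a braid $\beta(a,b)=\sigma_4^a\sigma_3\sigma_2\sigma_1^{-b}\sigma_2^{-1}\sigma_3^{-1}$ with arbitrary exponents $a,b\geq 0$, and a loop may traverse the cycle any number of times, giving products $\beta(a_1,b_1)\cdots\beta(a_n,b_n)$. No finite list of matrix computations covers all of these; you need a uniform reason for reducibility. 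The paper supplies one by rewriting $\beta(a,b)$ as $(\sigma_3^{-1}\sigma_4^a\sigma_3)(\sigma_2\sigma_1^{-b}\sigma_2^{-1})$ and exhibiting an explicit curve (surrounding the second and fourth marked points) whose isotopy class is preserved by every such product. Your argument needs this (or an equivalent uniform statement) in place of the proposed case-by-case Perron--Frobenius check; with that substitution the rest of your outline is sound.
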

\begin{proof}
By Theorem \ref{thm:jointless}, any pseudo-Anosov in this stratum is carried by a jointless track. The only jointless tracks in this stratum are those shown in Figure \ref{fig:33trackpairs}: the Enoki pair and the Camel pair. Note that the tracks shown in pairs are related by a horizontal reflection on the disk. In particular, if a map is carried by one track from a pair, then its reverse is carried by the partner track. So, it suffices to show that any pseudo-Anosov map is carried by one of the two Camel tracks.

Ham and Song in \cite{HS} compute the folding automaton for train tracks in the stratum $(1;1^5;3^2)$. This automaton is depicted in Figure \ref{fig:automaton}. The key fact for our proof is that any pseudo-Anosov is represented by a loop in the automaton, and a map $\psi_\beta$ is carried by a track $\tau$ if and only if $\tau$ appears in the automaton loop representing $\psi_\beta$. From this perspective, changing the starting track of such a loop amounts to conjugating $\beta$.

Now, let $\beta(a,b)= \sigma_4^a\sigma_3\sigma_2\sigma_1^{-b}\sigma_2^{-1}\sigma_3^{-1}$. The braid $\beta(a,b)$ is given by an ``outside" loop in the folding automaton, which only passes through the tracks in the corners. Note that any loop starting at one of the Enoki tracks which doesn't pass through either Camel track is given by a product of $\beta(a,b)$'s. But, $\beta(a,b)$ is reducible: it is conjugate to $(\sigma_3^{-1}\sigma_4^a\sigma_3)(\sigma_2\sigma_1^{-b}\sigma_2^{-1})$, which fixes the isotopy class of a curve surrounding only the second and fourth marked points. Similarly, any braid of the form $\beta(a_1,b_1)\beta(a_2,b_2)...\beta(a_n,b_n)$  is reducible, where $a_i,b_i,n\geq 0$ for all $i$. It follows that any pseudo-Anosov in this stratum is carried by one of the Camel tracks, up to conjugation.
\end{proof}

\begin{figure}
    \labellist
    \pinlabel $r$ at 0 45
    \pinlabel $b$ at 52 45
    \pinlabel $p$ at 92 45
    \pinlabel $g$ at 147 45
    \pinlabel $y$ at 180 45
    \pinlabel $d$ at 60 15
    \endlabellist
    \centering
    \includegraphics{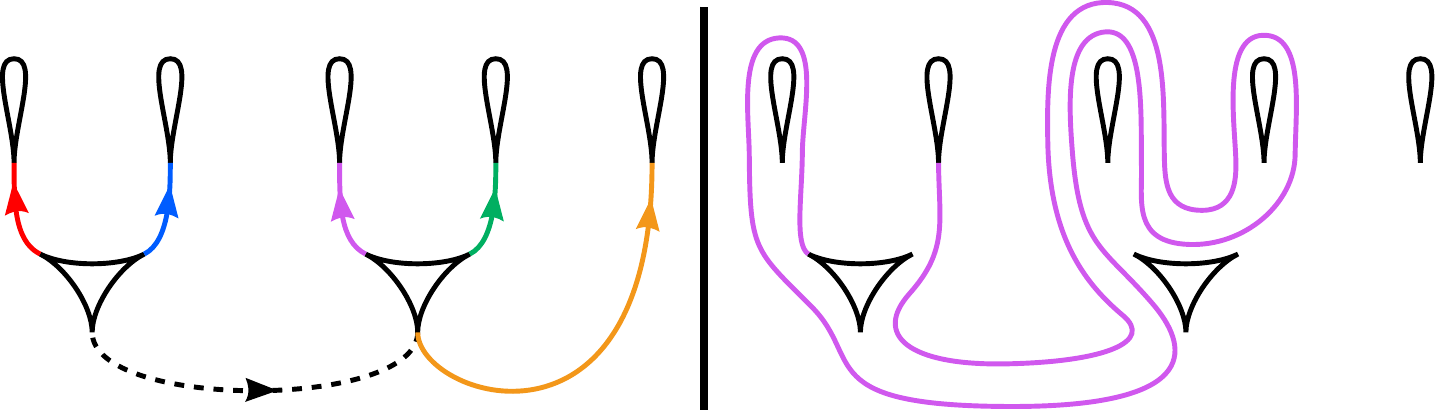}
    \caption{Left: the Camel track. Right: $f_\beta(p)=r^+dp^-g^+p^+\overline{d}b^\circ$}
    \label{fig:cameltrackex}
\end{figure}

\subsection{Analyzing the candidate braids}
This singularity type does lead to some FPF maps $\psi_h$, but we will show that none of the corresponding mapping classes $h$ describe open book decompositions of the Poincar\'e sphere $\cal{P}$. First, we introduce the candidates.

\begin{figure}[t]
    \centering
    \includegraphics[width=\textwidth]{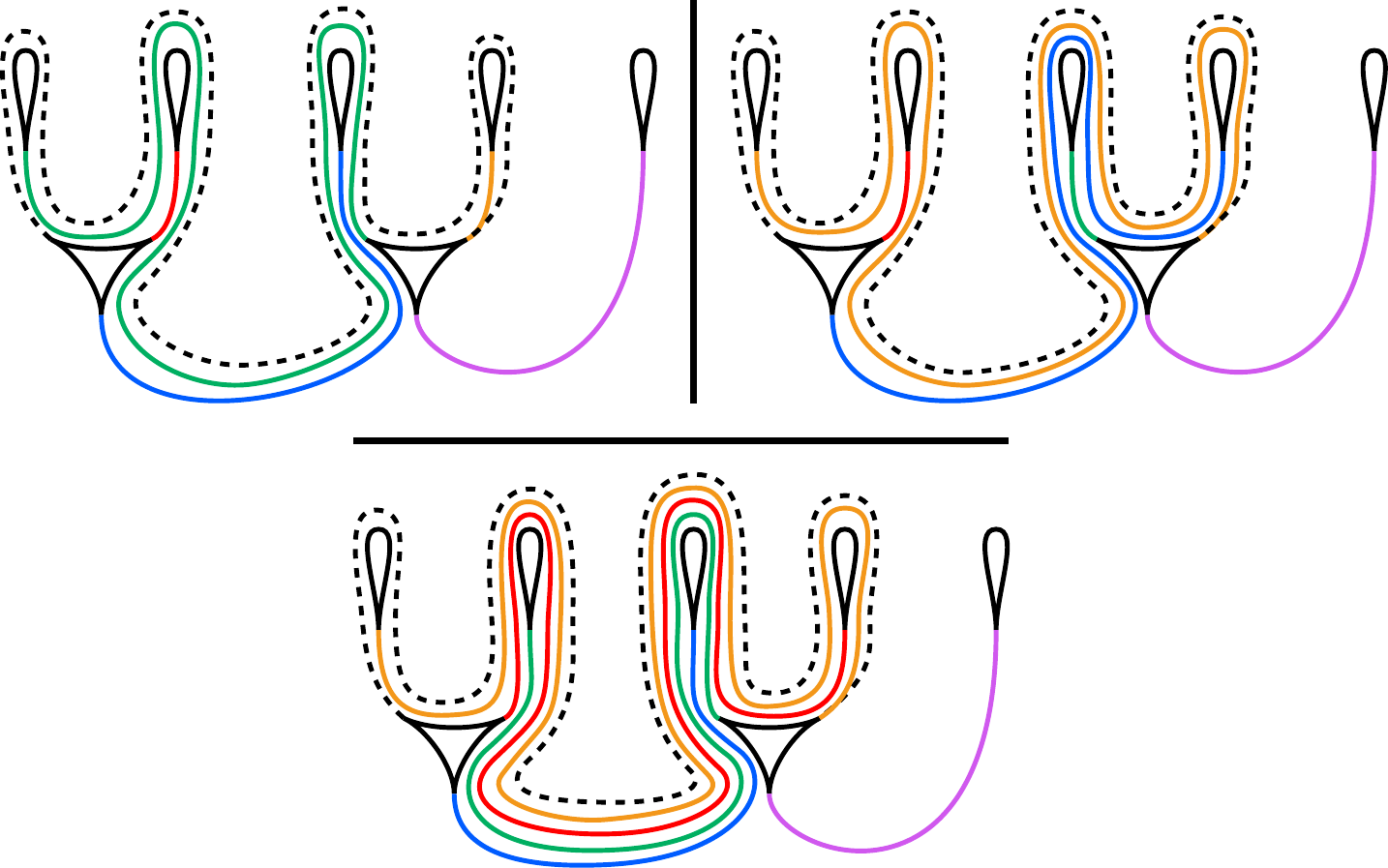}
    \caption{The images $\beta_i(\tau)$ before collapsing. Left: $\beta_1$. Right: $\beta_2$. Bottom: $\beta_3$.}
    \label{fig:beta_i}
\end{figure}

\begin{prop}\label{prop:candidates}
The braids
\begin{align*}
\beta_1&=(\sigma_4\sigma_3)^2(\sigma_2\sigma_1)^{-2}\\
\beta_2&=\sigma_1^{-3}\sigma_2^{-1}\sigma_3^{-1}\sigma_2(\sigma_3\sigma_4)^2\\
\beta_3&=(\sigma_4\sigma_3\sigma_1^{-1}\sigma_2^{-1})^2
\end{align*}
are all pseudo-Anosov and carried by $\tau$. The images $\beta_i(\tau)$ are shown in Figure \ref{fig:beta_i} immediately before collapsing onto $\tau$.
The braids $\Delta^{4k+2}\beta_i$ lift to FPF maps on $S$, for any $i\in\{1,2,3\}$ and any $k\in\bb{Z}$.
\end{prop}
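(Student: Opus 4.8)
The plan is to verify the claim in three stages, mirroring the logical structure of the trace lemma machinery already developed. First, I would exhibit, for each $i \in \{1,2,3\}$, an explicit train track map $f_{\beta_i}\colon\tau\to\tau$ on the Camel track and check that $\beta_i$ really does induce it. Concretely, Figure \ref{fig:beta_i} shows the image $\beta_i(\tau)$ immediately before collapsing onto $F(\tau)$; by deformation-retracting each image onto $\tau$ one reads off the train paths $f_{\beta_i}(r), f_{\beta_i}(b), f_{\beta_i}(p), f_{\beta_i}(g), f_{\beta_i}(y)$ in the edge-alphabet $\{r,b,p,g,y,d\}$, with the $d$-edge (the ``camel's hump'' infinitesimal-adjacent real edge) playing the role of the side-swapping edge below a marked point, just as in the example $f_\beta(p)=r^+ d\, p^- g^+ p^+ \overline d\, b^\circ$ in Figure \ref{fig:cameltrackex}. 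Once the train paths are recorded, I would form the transition matrices $M(f_{\beta_i})$ and check they are Perron--Frobenius; by Proposition \ref{prop:fix} this certifies that each $\psi_{\beta_i}$ is pseudo-Anosov, so the first assertion of the proposition follows. (One should also sanity-check the induced singularity type is $(1;1^5;3^2)$ by counting cusps of the complementary regions of each $\beta_i(\tau)$, though this is essentially forced by the fact that $\tau$ lives in that stratum.)

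Second, I would establish that each $\psi_{\beta_i}$ lifts to an FPF map $\psi_{h_i}$ on $S$. By the trace lemma (Lemma \ref{lem:trace}) and its jointless-monogon refinement (Corollary \ref{lem:tracemon}), it suffices to check two purely combinatorial conditions on the recorded train paths: (a) no real edge $e$ incident to a jointless monogon at a marked point appears in its own image $f_{\beta_i}(e)$ — on the Camel track every real edge terminates at a jointless $1$-marked monogon, so this must hold for all five of $r,b,p,g,y$; and (b) for the edges not covered by (a), between any two occurrences of $e$ in $f_{\beta_i}(e)$ the intervening subword contains an even number of traversals of the side-swapping edge $d$. Since condition (a) already handles all real edges here, the verification collapses to checking, for each $i$, that $r,b,p,g,y$ are each absent from their own image word. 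That said, the trace lemma controls interior fixed points coming from the real part of $\widetilde\tau$; I would also confirm there is no fixed infinitesimal polygon in the lift by tracking how $f_{\beta_i}$ permutes the two interior $3$-pronged infinitesimal triangles (they must be swapped, not fixed, for $\psi_{h_i}$ to be FPF), which is read directly off the combinatorics of how $f_{\beta_i}$ acts on infinitesimal edges.

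Third, I would promote this from ``$\psi_{h_i}$ is FPF'' to ``$\Delta^{4k+2}\beta_i$ lifts to an FPF map for every $k\in\bb Z$.'' The point is that FPF-ness of the geometric representative depends only on the free isotopy class, i.e.\ only on $\psi_{h_i}$, which is unchanged when one multiplies $\beta_i$ by a full twist $\Delta^2$ about $\partial D_5$ (Theorem \ref{thm:NTclass} and the Corollary to Theorem \ref{thm:ttMCG}): $\Delta^{2k}\beta_i$ and $\beta_i$ have the same geometric representative $\psi_{\beta_i}$, hence the same lift $\psi_{h_i}$ up to boundary Dehn twists, hence the same interior fixed-point set. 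The exponent $4k+2$ rather than $2k$ is the one subtlety: lifting $\Delta^2\in\Mod(D_5)$ under Birman--Hilden gives the boundary Dehn twist $D_{\partial S}^{?}$ on $S$, and only the \emph{even} multiples $\Delta^{4k}$ lift at all to honest mapping classes of the genus-two surface with the correct framing/orientation data, while $\Delta^{2}$ itself lifts to a half-twist-type element; so the braids that lift to well-defined open books are exactly $\Delta^{4k}\beta_i$ together with $\Delta^{4k+2}\beta_i$ (the residue that matches $\beta_i$'s own boundary behaviour), and in all these cases the interior dynamics — hence FPF-ness — is untouched. The main obstacle, and the step deserving the most care, is the second stage: correctly reading the train paths $f_{\beta_i}(e)$ off Figure \ref{fig:beta_i} and correctly identifying which edge is side-swapping, since a single misrecorded letter would either falsely create or falsely destroy a fixed point via the trace lemma. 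Everything downstream (Perron--Frobenius check, full-twist invariance) is routine once those six words are pinned down.
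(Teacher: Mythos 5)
Your first stage matches the paper's proof exactly: the paper verifies the images $\beta_i(\tau)$ by isotopy, notes carrying is immediate from transversality to the leaves, and certifies pseudo-Anosov via Perron--Frobenius transition matrices. The problems are in your second and third stages.

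In the second stage you misidentify the combinatorics. The side-swapping edges are not $d$; they are the infinitesimal edges of the monogons sitting above the marked points (this is how the notation $e\overline{e}\rightsquigarrow e$ works), so every traversal of $r,b,p,g,y$ contributes one side-swap while $d$ contributes none. Moreover $d$ \emph{is} a real edge of $\tau$ that does not end at a marked monogon, so it is not covered by Corollary \ref{lem:tracemon} and requires the parity condition of Lemma \ref{lem:trace}; your claim that ``condition (a) already handles all real edges'' is false. You also have the triangle condition backwards: the candidates arise in Case A of the paper's analysis, where the two interior infinitesimal triangles are \emph{fixed} by $f_\beta$ downstairs and their four lifts are swapped in pairs upstairs. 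Most importantly, Proposition \ref{prop:fix} gives only a lower bound $\tfrac12\mathrm{tr}\,M(f_h)\le|\mathrm{Fix}(\psi_h)|$, so the trace lemma is a \emph{necessary} condition for FPF-ness, not a sufficient one; passing your checks (a), (b) and the triangle check does not by itself certify that $\psi_{h_i}$ has no interior fixed points. The paper does not attempt such a hand certification: it defers the positive FPF verification to the software XTrain.

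The third stage contains the most serious error. It is not true that multiplying $\beta_i$ by a full twist leaves the interior dynamics of the \emph{lift} untouched, and it is not true that only certain powers of $\Delta^2$ lift under Birman--Hilden (every $5$-braid lifts). The correct mechanism, stated in the paper's remark, is that $\Delta^2$ lifts to the hyperelliptic involution $\iota$ (up to boundary twists), which swaps the four $3$-pronged singularities in pairs. The lifts of the $\beta_i$ themselves fix all four of these singularities and hence are \emph{not} FPF; composing with $\iota$ an odd number of times removes these fixed points, while composing an even number of times restores them. This is exactly why the proposition asserts FPF-ness for $\Delta^{4k+2}\beta_i$ and not for $\Delta^{4k}\beta_i$ --- a distinction your argument, which treats all full-twist multiples as dynamically equivalent upstairs, cannot see.
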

\begin{proof}
A routine isotopy rel. marked points certifies that the images $\beta_i(\tau)$ depicted in Figure \ref{fig:beta_i} are correct. From there, it is immediate that all three $\beta_i$ are carried by $\tau$: the images are transverse to the leaves of an appropriate fibered neighborhood of $\tau$. To check that the braids are pseudo-Anosov, one may compute their transition matrices $M(f_{\beta_i})$ from the images depicted. The matrices are all Perron--Frobenius, so the braids are pseudo-Anosov by Propostion \ref{prop:fix}. The fact that the braids $\Delta^2\beta_i$ lift to FPF maps can be verified by e.g. XTrain \cite{xtrain}.
\end{proof}
\begin{rem}
The braids $\beta_i$ themselves do not lift to FPF maps, since their lifts fix all four 3-pronged singularities. Composition with $\Delta^2$ lifts to composition with the hyperelliptic involution $\iota:S\to S$, which swaps the 3-pronged singularities in pairs.
\end{rem}

\begin{prop}\label{prop:classify}
If a pseudo-Anosov braid $\beta$ is carried by $\tau$ and lifts to a FPF map $h$ in the cover, then $\beta$ is conjugate to one of the $\beta_i$, up to powers of the full twist $\Delta^2$.
\end{prop}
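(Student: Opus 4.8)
\textbf{Proof proposal for Proposition \ref{prop:classify}.}

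The plan is to run a systematic combinatorial search over all train track maps $f_\beta:\tau\to\tau$ on the Camel track $\tau$ that could possibly be induced by a pseudo-Anosov braid lifting to a FPF map, exactly as was done for the Jellyfish track in Theorem \ref{thm:244}, but now keeping the survivors rather than eliminating all of them. Label the real edges $r,b,p,g,y,d$ as in Figure \ref{fig:cameltrackex}. Since $\tau$ is a jointless standard track with a 1-marked monogon at the end of each of $r,b,p,g,y$, Corollary \ref{lem:tracemon} tells us that none of these five edges can appear in its own image word; and the trace lemma (Lemma \ref{lem:trace}) constrains, for every real edge $e$, the parity of the number of side-swapping edges between successive occurrences of $e$ in $f_\beta(e)$. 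The search is organized by the rotation the central infinitesimal polygon performs, which is pinned down by the first letter of (say) $f_\beta(r)$; for each choice one propagates local constraints edge-by-edge — "if this image word started like such-and-such, then some other edge's image would have to pass over itself, contradicting the trace lemma" — exactly the style of argument in Lemmas \ref{lem:4trackgr+yr-} and \ref{lem:4trackgr-yr+}. Because there are finitely many infinitesimal-polygon rotations and the word-building branches are finite at each stage (the trace constraints kill most branches immediately), this terminates.

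The output of the search should be a short finite list of candidate train track maps; for each one, the inducing braid $\beta$ is unique up to conjugation and powers of $\Delta^2$ by the Corollary to Theorem \ref{thm:ttMCG}. One then checks — by reading off the transition matrices and lifting via the construction of Section \ref{sec:lift} (or directly with XTrain \cite{xtrain}) — which candidates are genuinely pseudo-Anosov and which lift to FPF maps. I would expect the surviving candidates to be precisely the three maps induced by $\beta_1,\beta_2,\beta_3$ from Proposition \ref{prop:candidates} (together with the $\Delta^2$-twisted versions, which have the same induced train track map), giving the claim. A cross-check: the reverse of any pseudo-Anosov carried by the right Camel track is carried by the left Camel track (Proposition \ref{prop:33track}), so the search on $\tau$ automatically captures the "or its reverse" ambiguity once we remember the $\beta_i^{\pm1}$ symmetry.

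The main obstacle is the one flagged in the outline: unlike the $(1;1^5;4)$ case, the combinatorial search in the $(1;1^5;3^2)$ stratum "requires some additional geometric input." The purely local trace-lemma bookkeeping will not, by itself, cut the branches down to a finite list — some branches correspond to words that "spiral" indefinitely around the track without any edge visibly running over itself, and these must be excluded by a genuinely geometric argument (e.g., such a spiraling image is not carried transversally to the leaves of any fibered neighborhood of $\tau$, or the resulting map is reducible rather than pseudo-Anosov, paralleling the reducibility argument with $\beta(a,b)$ in the proof of Proposition \ref{prop:33track}). Handling these non-terminating branches cleanly, and being careful that the case analysis on the central polygon's rotation is genuinely exhaustive, is where the real work lies; once the list is finite the remaining verifications are mechanical.
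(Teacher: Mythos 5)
Your high-level plan is the right one, and you correctly sense that the monogon trace lemma alone will not terminate the search. But the proposal stops exactly where the real content of the paper's proof begins: you flag the need for ``additional geometric input'' without identifying what it is, and your guesses for it (non-transversality of spiraling images, or a reducibility argument as in Proposition \ref{prop:33track}) are not what makes the argument work. The missing idea is this: each infinitesimal triangle of the Camel track contains a 3-pronged singularity, and these lift to four 3-pronged singularities $1,2,3,4$ on $S$ with $\iota(1)=3$, $\iota(2)=4$. Since $\psi_h$ permutes this set, commutes with $\iota$, and is assumed fixed-point-free, only four permutations are possible (the paper's cases A--D). Translating back down via the trace lemma, this says: if the triangles are fixed by $f_\beta$ then every occurrence of $d$ or $\overline{d}$ in $f_\beta(d)$ sits after an even number of preceding letters, while if the triangles are swapped then at least one of $f_\beta(d)$, $f_\beta(\overline{d})$ passes over $d$ or $\overline{d}$ after an odd number of letters. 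These parity constraints on the edge $d$ are precisely what kills the spiraling branches (the two ends of the image of $d$ can never meet if the parities are wrong) and what makes the case tree finite.

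A second, related divergence: you propose organizing the search by the first letter of $f_\beta(r)$, i.e.\ by the rotation of ``the central infinitesimal polygon,'' which is the right organizing principle for the Jellyfish track in Section \ref{sec:244} but not for the Camel track. The Camel track has two infinitesimal triangles joined by the edge $d$, and the paper's case analysis is organized entirely around $d$: first the fixed-versus-swapped dichotomy for the triangles, then the starting letter of whichever of $f_\beta(d)$, $f_\beta(\overline{d})$ carries the odd parity (Lemmas \ref{lem:33trackBCDb}--\ref{lem:33trackBCDy}), and finally, in the fixed case, the starting letters of both $f_\beta(d)$ and $f_\beta(\overline{d})$ (Lemmas \ref{lem:33trackAbp}--\ref{lem:33trackArg}), with the three candidates $\beta_1,\beta_2,\beta_3$ emerging only in the last of these. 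Without the singularity-permutation analysis you have no guarantee that your case split is exhaustive or that each branch terminates, so as written the proposal is a plan rather than a proof.
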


Assuming Proposition \ref{prop:classify}, we will prove Theorem \ref{thm:2-34}:
\begin{proof}[proof of Theorem \ref{thm:2-34}]
With the assumptions of the theorem statement, we can conclude that $h$ is symmetric and projects to a pseudo-Anosov 5-braid $\beta$. Because $\cal{P}$ is the double cover of $S^3$ branched along the closure $\widehat{\beta}$, we know that $\widehat{\beta}=T(3,5)$. And, since $\cal{P}$ is an L-space, we know that $|c(\beta)|<2$ by \cite{HM}. By Proposition \ref{prop:33track}, we know $\beta$ (or its reverse) is conjugate to a map carried by the Camel track from Figure \ref{fig:33trackpairs}. By Proposition \ref{prop:classify}, we know $\beta$ is conjugate to $\Delta^{2k}\beta_i^{\pm1}$ for some $k\in\bb{Z}$. We can easily compute $c(\Delta^{2k}\beta_i^{\pm1})=k$, so we know $\beta$ is conjugate to $\beta_i^{\pm1}$ or $\Delta^2\beta_i^{\pm1}$ for some $i$. It thus suffices to check that none of those braids close up to $T(3,5)$. For each, a determinant computation will work (their determinants are all either 5 or 9, but $\det T(3,5)=1$).
\end{proof}

The rest of this section will be devoted to proving Proposition \ref{prop:classify}. We will do that by an argument similar to that of Section \ref{sec:244}, by carefully analyzing the possible train track maps on the canonical track $\tau$.

\subsection{Setup for the case analysis}

For the rest of this section, $\tau$ will denote the Camel track shown in Figure \ref{fig:cameltrackex}. Denote the real edges of $\tau$ by $r$, $b$, $g$, $p$, $y$ all oriented upward, and $d$ (for dashed) oriented to the right. As before, we will simplify notation to write $e\overline{e}$ as just $e$ for any $e\neq d$; the edge $d$ never appears in a train path twice in a row, and the orientation of $d$ or $\overline{d}$ will be an important consideration later. The decorations $+$, $-$, and $\circ$ over a real edge $e\neq d$ will notate passing $e$ on the right or left or ending there, respectively, as in Section \ref{sec:244}; for $d$, we will only use $\circ$ to denote ending on $d$. An example of this notation is shown in Figure \ref{fig:cameltrackex}.

Next, we will interpret the Trace Lemma (Lemma \ref{lem:trace}) on $\tau$, and see how it interacts with the singularity type of the lifted map. Note that every real edge besides $d$ ends at a 1-marked monogon, so if $\beta$ lifts to a FPF map under the Birman--Hilden correspondence, then $e$ does not appear in $f_\beta(e)$ for $e\neq d$. For $d$, we know there are an even number of letters between any two occurrences of $d$ or $\overline{d}$ in $f_\beta(d)$.

To further restrict the behavior of $f_\beta(d)$, we may examine the interaction of the infinitesimal triangles with the lift $\psi_h$ of $\psi_\beta$ to the surface $S$. Each infinitesimal triangle contains a 3-pronged singularity, which lifts to two 3-pronged singularities in the lift. Note that $h$ commutes with a hyperelliptic involution $\iota:S\to S$, by construction. Denote the four 3-pronged singularities in the lift by $1,2,3,4$, and suppose $\iota(1)=3$, and $\iota(2)=4$. Because $\psi_h$ permutes the set $\{1,2,3,4\}$; $\psi_h$ and $\iota$ commute; and $\psi_h$ is FPF, there are only a few possibilities for $\psi_h(1),...,\psi_h(4)$. The following table lists all the possibilities:

    \begin{center}
    \begin{tabular}{c|c|c|c|c}
     & Case A & Case B & Case C & Case D \\
     \hline
    $\psi_h(1)$ & 3 & 2 & 4 & 2\\
    $\psi_h(2)$ & 4 & 1 & 1 & 3\\
    $\psi_h(3)$ & 1 & 4 & 2 & 4\\
    $\psi_h(4)$ & 2 & 3 & 3 & 1\\
    \end{tabular}
    \end{center}

The upshot of this analysis is that the infinitesimal triangles in $\tau$ downstairs are fixed by $f_\beta$ if and only if their lifts are sent by $f_h$ to the other side of $\widetilde{\tau}$ (case A). Conversely, the infinitesimal triangles in $\tau$ downstairs are swapped by $f_\beta$ if and only if at least one of their lifts is sent to the same side of $\widetilde{\tau}$ (case B: both are sent to the same side, cases C and D: one is sent to the same side, one to the opposite side).

In particular, we can conclude by the Trace Lemma (\ref{lem:trace}) that in case A, when the triangles are fixed by $f_\beta$, that every occurrence of $d$ or $\overline{d}$ in $f_\beta(d)$ occurs after an even number of preceding letters. And, in cases B, C, and D, when the triangles are swapped by $f_\beta$, at least one of $f_\beta(d)$ or $f_\beta(\overline{d})$ passes over $d$ or $\overline{d}$ after odd numbers of letters.

\begin{figure}[h]
    \centering
    \includegraphics[width=\textwidth]{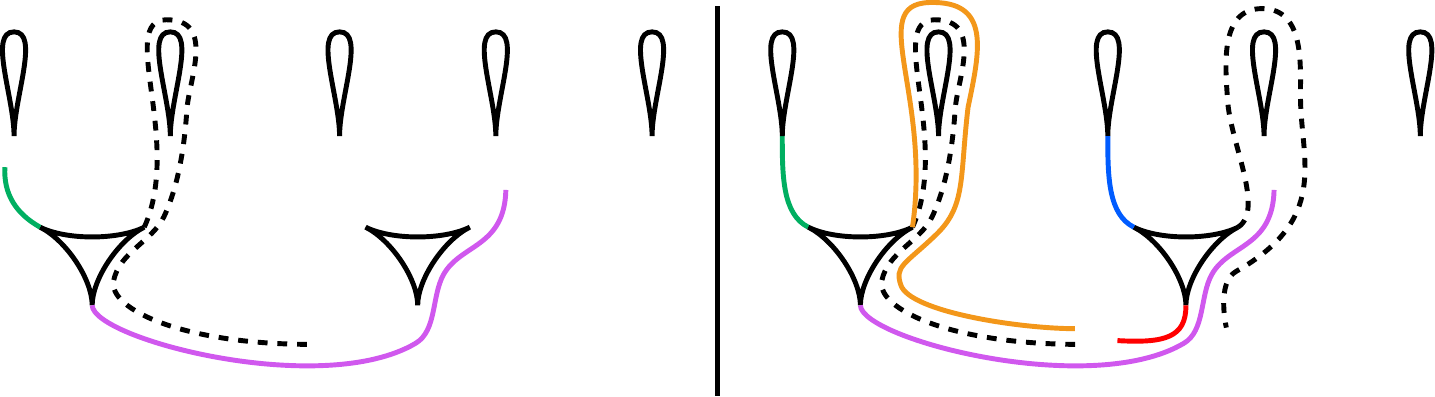}
    \caption{Visual aids for the proof of Lemma \ref{lem:33trackBCDb}.}
    \label{fig:33trackBCDb}
\end{figure}

\begin{figure}[h]
    \centering
    \includegraphics[width=\textwidth]{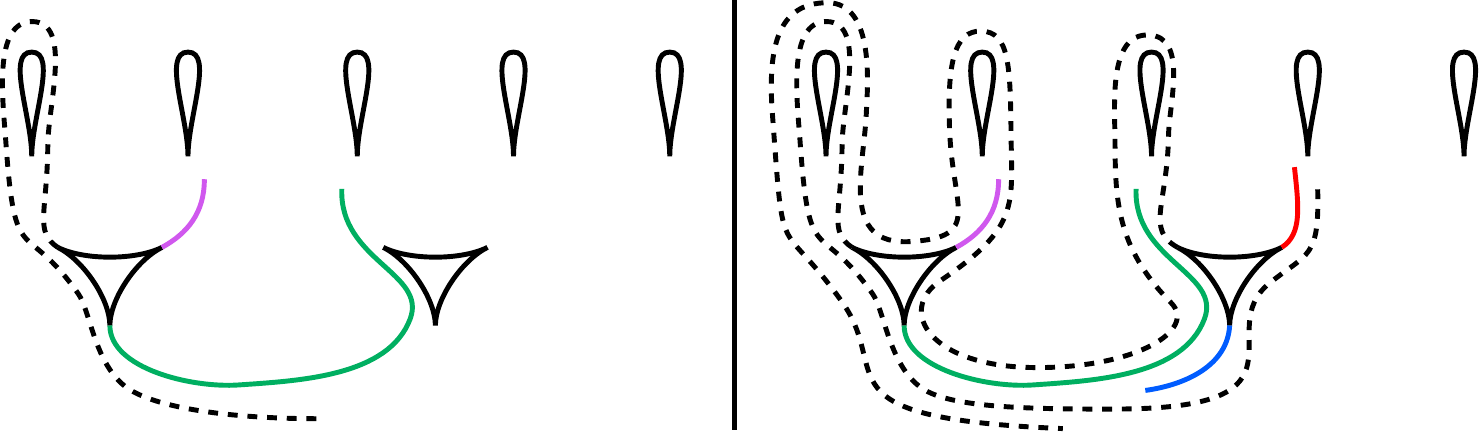}
    \caption{Visual aids for the proof of Lemma \ref{lem:33trackBCDr}.}
    \label{fig:33trackBCDr}
\end{figure}

\begin{figure}[h]
    \centering
    \includegraphics[width=\textwidth]{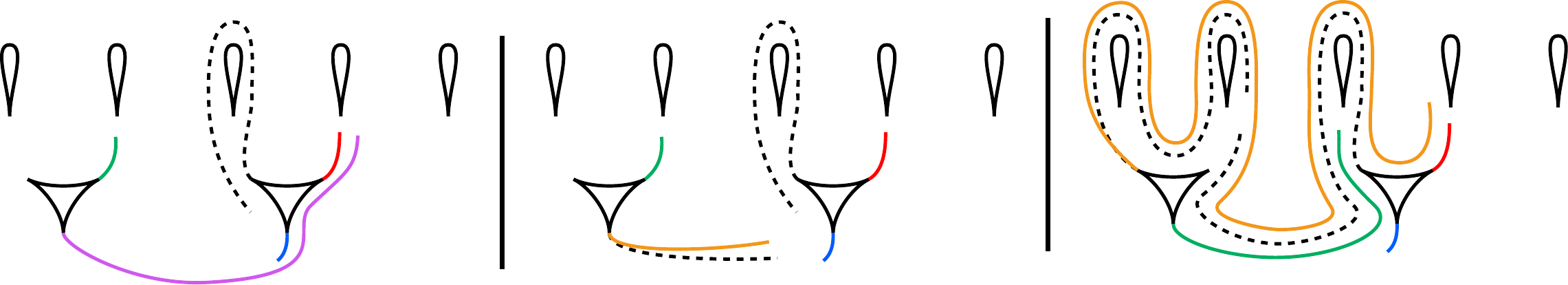}
    \caption{Visual aids for the proof of Lemma \ref{lem:33trackBCDp}.}
    \label{fig:33trackBCDp}
\end{figure}

\begin{figure}[h]
    \centering
    \includegraphics[width=\textwidth]{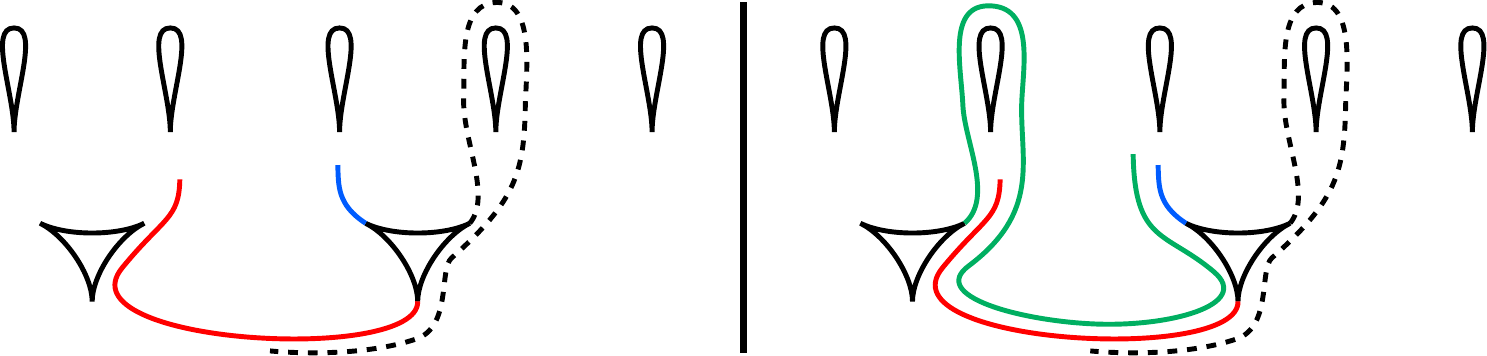}
    \caption{Visual aids for the proof of Lemma \ref{lem:33trackBCDg}.}
    \label{fig:33trackBCDg}
\end{figure}

\begin{figure}[h]
    \centering
    \includegraphics[width=\textwidth]{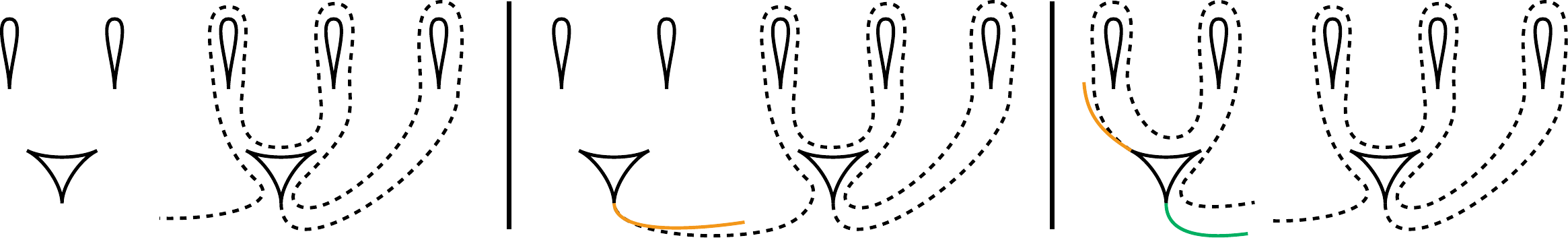}
    \caption{Visual aids for the proof of Lemma \ref{lem:33trackBCDy}.}
    \label{fig:33trackBCDy}
\end{figure}

\FloatBarrier

\subsection{Cases B, C, D}
Our analysis begins with cases B,C, and D above, when the triangles are swapped by $f_\beta$. In these cases, we know that at least one of $f_\beta(d)$ or $f_\beta(\overline{d})$ passes over $d$ or $\overline{d}$ after an odd number of letters. We will build our case analysis around which of $f_\beta(d)$ or $f_\beta(\overline{d})$ satisfies this property. In what follows, we \textit{strongly} encourage the reader to grab some colored writing implements and draw the maps described in each proof: the symbols can only help so much.

First, assume that $f_\beta(\overline{d})$ passes over $d$ after an odd number of letters. We know that $f_\beta(\overline{d})$ starts at $b$, $r$, or $d$ because the triangles are swapped by $f_\beta$. But, $f_\beta(\overline{d})$ cannot start at $d$, because by assumption it passes over $d$ after an odd number of letters. So, it must be that $f_\beta(\overline{d})$ starts at either $b$ or $r$.

\begin{lem}\label{lem:33trackBCDb}
If $f_\beta(\overline{d})$ starts at $b$, then $\psi_h$ has a fixed point.
\end{lem}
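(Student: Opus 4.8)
The plan is to run the same kind of forcing argument used in Lemmas~\ref{lem:4trackgr+yr-} and \ref{lem:4trackgr-yr+}, but now organized around the image path $f_\beta(\overline{d})$. I would start by recording the geometric constraint: since $f_\beta(\overline{d})$ begins at $b$, the infinitesimal triangle adjacent to $\overline{d}$ is being carried to the triangle adjacent to $b$, and transversality to the leaves of $F(\tau)$ forces the second letter to lie in a short list (essentially the edges adjacent to that triangle in the correct cyclic position). I would then chase the path letter by letter, at each stage using the figure (Figure~\ref{fig:33trackBCDb}) to see which continuations are geometrically possible, and using Corollary~\ref{lem:tracemon} (no real edge $e \neq d$ reappears in $f_\beta(e)$) together with the jointless-monogon structure to discard continuations that would force some $f_\beta(e)$ to pass over or end on $e$. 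As in Section~\ref{sec:244}, the typical step is: "if $f_\beta(\overline{d}) = b^{\pm} x^{\pm} \ldots$ with $x \in \{r, p\}$ say, then reading off $f_\beta(x)$ from the same picture shows $x$ appears in $f_\beta(x)$ or $f_\beta(x)$ ends on $x$, contradicting trace." Each such elimination either kills a branch outright or forces the path to spiral further around the track.

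The key extra ingredient here, beyond the $(1;1^5;4)$ analysis, is the parity bookkeeping set up in the preceding subsection: we are in cases B/C/D, and we have assumed specifically that $f_\beta(\overline{d})$ passes over $d$ after an \emph{odd} number of letters. So after the combinatorial chase pins down $f_\beta(\overline{d})$ (or a short list of possibilities for it), I would check the position of the first reoccurrence of $d$ or $\overline{d}$: if the surviving path only ever returns to $d$ after an \emph{even} number of letters, that contradicts the standing assumption, so that branch is also dead; if it returns after an odd number of letters, then by the side-swapping interpretation of Lemma~\ref{lem:trace} the lift $f_h$ sends both lifts of the relevant edge to the \emph{same} side, which — combined with the fact that $e$ or a lift of $e$ must then reappear in $f_h(e_i)$ — produces the fixed point of $\psi_h$ we want. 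In other words, once the path is determined, either trace (downstairs) or the side-swapping/fixed-point argument (upstairs) closes it out. The conclusion "$\psi_h$ has a fixed point" should emerge in every surviving branch.

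I expect the main obstacle to be purely the branching: unlike the Jellyfish track, the Camel track has six real edges and the path $f_\beta(\overline{d})$ can be fairly long, so keeping the case tree manageable requires choosing the right invariant to track at each step — most likely "which triangle are we adjacent to and which way did we last turn," so that the geometric constraint on the next letter stays short. I would lean heavily on the figures to justify the transversality restrictions rather than spelling out fibered-neighborhood computations, exactly as in the $(1;1^5;4)$ lemmas, and I would try to phrase the recurring elimination as a single sub-observation (e.g. "whenever the path enters $\{g,y\}$ it must immediately exit, and it cannot exit without creating a trace violation in $f_\beta(g)$ or $f_\beta(y)$") so the proof does not degenerate into a dozen near-identical paragraphs. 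The genuinely delicate point will be making sure the odd/even parity of the return to $d$ is tracked correctly through every turn, since that is what distinguishes this lemma's hypothesis from the companion lemmas for $r$-start and $d$-start.
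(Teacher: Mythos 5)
Your overall methodology (letter-by-letter chase, eliminations via the trace lemma, reliance on the figures for transversality) matches the paper's, but the specific mechanism you propose for closing out the surviving branch is wrong, and you are missing the structural step that makes the chase terminate. On the first point: you suggest that once $f_\beta(\overline{d})$ is pinned down, an \emph{odd}-parity first return to $d$ "produces the fixed point" via the side-swapping interpretation of Lemma~\ref{lem:trace}. It does not. That odd parity is precisely the standing hypothesis of cases B--D (it encodes that the lifted $3$-pronged singularities are permuted without fixed points, e.g.\ $1\mapsto 2\mapsto 1$), and it is entirely consistent with $\psi_h$ being FPF --- indeed the candidate braids $\Delta^2\beta_i$ realize analogous parity behavior without fixed points. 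The trace lemma yields a fixed point only from an odd number of side-swaps \emph{between two occurrences} of an edge in its own image, not from the parity of the initial segment of $f_\beta(\overline{d})$. In the paper every contradiction in this lemma comes from forcing a monogon edge ($r$, $b$, $p$, $g$, or $y$) to appear in its own image, invoking Corollary~\ref{lem:tracemon}.

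On the second point: chasing $f_\beta(\overline{d})$ alone does not terminate, because (as the companion Lemma~\ref{lem:33trackBCDr} shows) the images of $d$ and $\overline{d}$ can legitimately spiral around the outside of the track. The paper's proof instead exploits the fact that the three real edges emanating from an infinitesimal triangle must map, in cyclic order, to the three edges emanating from the image triangle: from "$f_\beta(\overline{d})$ starts at $b$" one immediately reads off that $f_\beta(g)$ starts at $r$ and $f_\beta(p)$ starts at $d$, and the real case split is over which edge of the \emph{other} triangle ($r$, $b$, or $d$) starts at $g$. The first two sub-cases die quickly by trace; in the third, one determines $f_\beta(b)=p^\circ$ and $f_\beta(g)=r^\circ$, observes that $f_\beta(\overline{d})=b^-dp^-g^-\cdots$ must spiral until it passes over $y$, and then notes that $f_\beta(y)$ is forced to shadow $f_\beta(\overline{d})$ and therefore passes over $y$ --- that is the fixed point. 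Your sketch acknowledges consulting other edges' images as side-checks, but without organizing the argument around the triangle-to-triangle correspondence and the interaction of $f_\beta(y)$ with the spiral of $f_\beta(\overline{d})$, the case tree you describe would not close.
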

\begin{proof}
Because $f_\beta(\overline{d})$ starts at $b$, we must have that $f_\beta(g)$ starts at $r$ and $f_\beta(p)$ starts at $d$. Now, note that $f_\beta(p)=dg^{\pm\circ}...$ by trace. It follows that either $f_\beta(g)=r^\circ$, or $f_\beta(g)=r^+b^{\pm\circ}...$ by trace. Note that $f_\beta(\overline{d})=b^-d...$ (possibly after some initial twisting over $r$ and $b$), and then it's easy to see that $f_\beta(g)=r^\circ$ or $f_\beta(g)=r^-b^-dp^{\pm\circ}...$ We're now in the situation pictured on the left of Figure \ref{fig:33trackBCDb} (with some extra information about $f_\beta(g)$ not yet shown).

From here, look at the other triangle. We don't know where each edge starts on the right triangle, so consider which edge starts at $g$. If $f_\beta(r)$ starts at $g$, then either $f_\beta(r)$ will pass over $r$ or $f_\beta(p)$ will pass over $p$. We reach a similar conclusion if $f_\beta(b)$ starts at $g$.

The slightly harder case is if $f_\beta(d)$ starts at $g$. In this case, look at $f_\beta(b)$: we know it starts at $p$. If $f_\beta(b)$ starts with $p^+$ then it will pass over $b$. On the other hand, if $f_\beta(b)$ starts with $p^-$ then either $f_\beta(b)$ will pass over $b$ or $f_\beta(p)$ will pass over $p$. It follows that $f_\beta(b)=p^\circ$. From there, we can quickly conclude that $f_\beta(g)=r^\circ$.

Next, follow $f_\beta(\overline{d})$ and $f_\beta(y)$. We must have $f_\beta(\overline{d})=b^-dp^-g^-...$ and then $f_\beta(\overline{d})$ must spiral around the track some (possibly 0) number of times before eventually passing over $y$. This situation is pictured on the right in Figure \ref{fig:33trackBCDb}. Finally, note that $f_\beta(y)$ follows all of $f_\beta(\overline{d})$ to that point, so $f_\beta(y)$ passes over $y$.
\end{proof}

\begin{lem}\label{lem:33trackBCDr}
If $f_\beta(\overline{d})$ starts at $r$, then $\psi_h$ has a fixed point.
\end{lem}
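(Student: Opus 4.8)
The plan is to run exactly the same kind of forced-propagation argument as in the proof of Lemma \ref{lem:33trackBCDb}, but starting from the hypothesis that $f_\beta(\overline{d})$ begins at $r$. First I would record the consequences of this hypothesis for the rotation of the left infinitesimal triangle: since $f_\beta$ permutes the cusps of the left triangle and $f_\beta(\overline d)$ starts at $r$, the edges $\overline d$, $b$, $r$ are cyclically sent to $r$, $\overline d$, $b$ (or the other rotational option), so that $f_\beta(g)$ starts at $\overline d$ and $f_\beta(p)$ starts at $b$ (swapping the roles of $b$ and $\overline d$ compared to Lemma \ref{lem:33trackBCDb}). I would then use Corollary \ref{lem:tracemon} repeatedly: no real edge $e \neq d$ may reappear in $f_\beta(e)$. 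Starting from $f_\beta(\overline d) = r^{\pm} \dots$, the next possible letters are forced (it must turn onto one of the edges emanating from the switch reached after traversing $r$), and any choice that would let $f_\beta(p)$ or $f_\beta(g)$ double back over $p$ or $g$ is excluded. As in the previous lemma, this forces $f_\beta(\overline d)$ to read $r^{\pm} b^{\pm} d \dots$ (after possibly some initial twisting over $r$ and $b$) and then spiral; I expect the upshot to be that either $f_\beta(g) = \overline d^{\,\circ}$ outright, or $f_\beta(g)$ is forced to follow a long prefix of $f_\beta(\overline d)$.

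Next I would turn to the right infinitesimal triangle, whose cusps are the starting points of $f_\beta(r)$, $f_\beta(b)$, $f_\beta(d)$, and split into subcases according to which of these three images starts at the edge $g$ (this is the edge whose monogon the right triangle's relevant cusp feeds into — exactly the move used in Lemma \ref{lem:33trackBCDb}). If $f_\beta(r)$ or $f_\beta(b)$ starts at $g$, a two-step look-ahead shows that either that image passes over its own edge (contradicting Corollary \ref{lem:tracemon}, hence producing an interior fixed point in the lift by Lemma \ref{lem:trace}/Proposition \ref{prop:fix}) or $f_\beta(p)$ passes over $p$, same contradiction. The remaining subcase, $f_\beta(d)$ starting at $g$, is the one I expect to be the main obstacle: here one must instead chase $f_\beta(\overline d)$ and $f_\beta(y)$ around the outside of the track, show that $f_\beta(\overline d) = r^{\pm} b^{\pm} d \, p^{-} g^{-} \dots$ and must spiral some number of times before it can terminate, and then observe that $f_\beta(y)$ is forced to copy that entire prefix of $f_\beta(\overline d)$ and therefore runs over $y$ — which again, via the trace lemma, gives a fixed point for $\psi_h$. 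So in every subcase we exhibit a real edge $e$ with $e \in f_\beta(e)$ (or an image terminating on its own edge), contradicting that $\psi_h$ is FPF.

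The hardest part will be making the forced-propagation chains airtight: at each switch of $\tau$ there are two continuation options ($+$ or $-$, i.e. passing the next edge on the left or the right), and one has to verify carefully — using the explicit combinatorics of the Camel track in Figure \ref{fig:cameltrackex} and the trace constraints on $f_\beta(d)$ coming from cases B, C, D — that one of the two options is always immediately excluded, so that the path is genuinely forced rather than merely plausible. As in the previous lemma, I would accompany the argument with the figure (Figure \ref{fig:33trackBCDr}) showing the two key configurations: the forced shape of $f_\beta(\overline d)$ together with the partial data on $f_\beta(g)$, and then the spiralling configuration in the $f_\beta(d)$-starts-at-$g$ subcase from which the fixed point in $f_\beta(y)$ is read off. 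Everything else is the same bookkeeping already exercised in the proof of Lemma \ref{lem:33trackBCDb}, just with the roles of $b$ and $\overline d$ (and correspondingly of certain outer edges) interchanged.
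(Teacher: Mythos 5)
Your overall template (triangle bookkeeping, forced propagation via the trace lemma, then a case split on where the images of the left-triangle edges begin) is the right one, but several concrete steps are off, and the hardest subcase is resolved in the paper by a mechanism your proposal does not contain. First, the bookkeeping: in cases B, C, D the two infinitesimal triangles are \emph{swapped} by $f_\beta$, not rotated into themselves, so from ``$f_\beta(\overline{d})$ starts at $r$'' one deduces that $f_\beta(g)$ starts at $d$ (traversed left-to-right) and $f_\beta(p)$ starts at $b$ --- not that $f_\beta(g)$ starts at $\overline{d}$. Second, the standing hypothesis of this branch is that $f_\beta(\overline{d})$ passes over $d$ after an \emph{odd} number of letters, so the forced prefix is $f_\beta(\overline{d})=r^+d\ldots$; your $r^{\pm}b^{\pm}d\ldots$ has the wrong parity and contradicts the hypothesis you are working under. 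The paper then splits on which of $f_\beta(r)$, $f_\beta(b)$, $f_\beta(d)$ starts at $p$ (you split at $g$); the first two options do die quickly by trace, as you anticipate.

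The genuine gap is in the remaining configuration. When $f_\beta(d)$ starts at $p$ one gets $f_\beta(d)=p^+\overline{d}b^+r^+d\ldots$ and $f_\beta(\overline{d})=r^+dg^{\pm\circ}\ldots$, and \emph{no} edge is forced to run over itself in finitely many steps, so neither your ``two-step look-ahead'' (under whichever of your $g$-subcases this configuration falls) nor your concluding claim that every subcase produces some $e$ appearing in $f_\beta(e)$ is available here. The contradiction the paper extracts is of a different kind: the requirement that consecutive occurrences of $d$ or $\overline{d}$ in $f_\beta(d)$ and in $f_\beta(\overline{d})$ be separated by an even number of letters forces the two image paths to spiral around the outside of the track indefinitely without ever meeting, which is impossible for a finite train path. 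Without this parity/termination argument the proof does not close, and your proposed fixed point in $f_\beta(y)$ for the ``$f_\beta(d)$ starts at $g$'' subcase does not substitute for it.
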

\begin{proof}
In this case, because $f_\beta(\overline{d})$ passes over $d$ after an odd number of letters, we must have $f_\beta(\overline{d})=r^+d...$ (possibly after some initial twisting on $r$ and $b$). Also, we know $g$ starts at $d$, so we must have $f_\beta(g)=dp^\pmd...$ by trace. It is easy to see from here that either $f_\beta(p)=b^\circ$ or $f_\beta(p)=b^+r+dg^{\pm\circ}...$ This situation is pictured on the left of Figure \ref{fig:33trackBCDr}.

Now, consider which edge starts at $p$; whichever edge it is, it must start with $p^+$, because otherwise $f_\beta(g)$ will be forced to pass over $g$. If $f_\beta(b)$ starts at $p$, we know $f_\beta(b)=p^+\overline{d}b^{\pm\circ}...$ and if $f_\beta(r)$ starts at $p$, we know $f_\beta(r)=p^+\overline{d}b^+r^{\pm\circ}...$ In either case, we've reached a contradiction by trace.

So, we are left to consider if $f_\beta(d)$ can start at $p$. Note that in this case, $f_\beta(d)=p^+\overline{d}b^+r^+d...$ and $f_\beta(\overline{d})=r^+dg^{\pm\circ}...$ See the right of Figure \ref{fig:33trackBCDr}. From here, because there must be an even number of letters between occurrences of $d$ or $\overline{d}$ in both $f_\beta(d)$ and $f_\beta(\overline{d})$, we can see that $f_\beta(d)$ and $f_\beta(\overline{d})$ will continue to spiral around the outside of the track and never meet.
\end{proof}

This completes the casework under the assumption that $f_\beta(\overline{d})$ passes over $d$ after an odd number of letters. Now, suppose instead that $f_\beta(\overline{d})$ passes over $d$ after an even number of letters. From the argument in the previous subsection, we can then conclude that $f_\beta(d)$ passes over $\overline{d}$ after an odd number of letters. In this case, $f_\beta(d)$ starts at one of $p$, $g$, or $y$.

\begin{lem}\label{lem:33trackBCDp}
If $f_\beta(d)$ starts at $p$, then $\psi_h$ has a fixed point.
\end{lem}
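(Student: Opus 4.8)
The plan is to mimic the structure of the earlier lemmas in this subsection (Lemmas \ref{lem:33trackBCDb} and \ref{lem:33trackBCDr}): start from the hypothesis that $f_\beta(d)$ begins at $p$, use the Trace Lemma together with the jointless-monogon corollary (Corollary \ref{lem:tracemon}) to force the first few letters of $f_\beta(d)$ and of the other real edges, and then show that the constraints propagate around the track until some real edge $e\neq d$ is forced to contain $e$ in $f_\beta(e)$ — contradicting Corollary \ref{lem:tracemon}, which means $\psi_h$ has an interior fixed point.

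\textbf{Setup.} First I would record what the hypothesis forces on the left infinitesimal triangle. Since the triangles are swapped by $f_\beta$ (we are in cases B, C, D), and $f_\beta(d)$ starts at $p$, the cyclic structure of the triangle containing $r,b,d$ determines that $f_\beta(b)$ and $f_\beta(r)$ start at the two remaining edges of the \emph{other} triangle's incident edges — concretely, one of them starts at $g$ and one at $y$ (and the right triangle's edge $d$-image must account for the third slot). I would fix this bookkeeping carefully using Figure \ref{fig:33trackpairs} / \ref{fig:cameltrackex}, exactly as in the proof of Lemma \ref{lem:33trackBCDb}. Because $f_\beta(d)$ passes over $\overline d$ after an odd number of letters, $f_\beta(d)$ cannot immediately return to $d$; after its initial letter $p$ it must continue, and by trace (Corollary \ref{lem:tracemon} applied to $p$) the continuation cannot revisit $p$ before enough intervening letters. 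So $f_\beta(d) = p^{\pm}(\dots)$ where the next letter is forced to be an edge of the incident triangle on the $p$-side, i.e.\ $g$ or $y$.

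\textbf{Propagation.} The heart of the argument is the same ``spiral-forcing'' mechanism used in the two preceding lemmas. Having pinned down the first two or three letters of $f_\beta(d)$, I would look at whichever real edge $e\in\{r,b\}$ (or possibly $g,y$) starts at $p$ — call it the \emph{competitor} — and observe that the sign with which $e$ crosses $p$ is constrained: one choice of sign immediately makes $f_\beta(e)$ pass over $e$ (contradiction), while the other choice forces $f_\beta(e)$ to run parallel to $f_\beta(d)$ for several letters, eventually passing over $\overline d$ and continuing along the outside of the track. Iterating this — each time, either we contradict Corollary \ref{lem:tracemon} directly, or the image path is forced one step further around the track — we conclude that $f_\beta(d)$ and the competitor path both spiral around the boundary of $\tau$ indefinitely, so that some edge on the far triangle ($g$, $y$, or $d$ itself on the wrong side) is forced into its own image. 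In the $d$-on-wrong-side case I would instead invoke the parity statement from the previous subsection: $f_\beta(d)$ passing over $\overline d$ after an odd number of letters, combined with the even-parity constraint that must hold between genuine self-occurrences in case A — but here we are in B/C/D, so the contradiction is rather that the two spirals never close up consistently, exactly as at the end of the proof of Lemma \ref{lem:33trackBCDr}.

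\textbf{Main obstacle.} The genuine difficulty is purely combinatorial case-management: when $f_\beta(d)$ starts at $p$, there are two sub-cases for which of $r,b$ is the competitor at $p$, and within the ``$f_\beta(d)$ starts at $p$'' branch one must also separately handle whether the second letter of $f_\beta(d)$ is $g$ or $y$, and for each, whether the relevant crossing sign is $+$ or $-$. Keeping the orientations of $d$ versus $\overline d$ straight throughout (since that parity is the whole point) is where errors creep in, and is why the authors advise drawing the pictures. I would organize the proof as: (1) fix the triangle bookkeeping; (2) dispatch the ``easy'' competitor sign by a one-line trace violation; (3) in the remaining sign, chase the forced spiral for enough letters to see it never terminates; (4) conclude that some $e\neq d$ has $e\in f_\beta(e)$, or that $f_\beta(d)$ ends at $d$, either of which yields an interior fixed point of $\psi_h$ by Corollary \ref{lem:tracemon} and Proposition \ref{prop:fix}. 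I expect step (3) to be the bottleneck, but it should be no harder than the corresponding step in Lemma \ref{lem:33trackBCDr}.
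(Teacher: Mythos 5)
Your proposal correctly identifies the general method (Trace Lemma plus jointless--monogon corollary, then chase forced letters until some edge $e\neq d$ passes over itself), but it has a genuine gap: the entire content of this lemma \emph{is} the case analysis, and you defer it wholesale to ``step (3): chase the forced spiral.'' Moreover, the concrete combinatorial claims you do commit to in the setup are partly wrong for the Camel track. You assert that $f_\beta(r)$ and $f_\beta(b)$ start at $g$ and $y$ respectively; in fact, with $f_\beta(d)$ starting at $p$, the swapped-triangle bookkeeping gives that $f_\beta(r)$ starts with $g^{\pm\circ}$ while $f_\beta(b)$ starts with $\overline{d}$ \emph{or} $y^{\pm\circ}$ --- the edge $\overline{d}$ is one of the three edges incident to the right triangle, and omitting it loses a case. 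You also claim the second letter of $f_\beta(d)$ must be $g$ or $y$; in the subcase that actually survives longest, the forced continuation is $f_\beta(d)=p^+\overline{d}\ldots$, i.e.\ the second letter is $\overline{d}$, which is exactly what makes the odd-parity hypothesis on $f_\beta(d)$ bite.

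The paper's proof is organized around a different, and cleaner, trichotomy: which edge's image \emph{starts at $d$} (the options being $f_\beta(p)$, $f_\beta(\overline{d})$, or $f_\beta(g)$). The first two options die quickly by trace violations at $b$ or by forcing one of $y$, $r$, $g$ over itself; the third requires invoking the standing even-parity assumption on $f_\beta(\overline{d})$ to pin down $f_\beta(\overline{d})=r^-b^-d\ldots$, conclude that $f_\beta(d)$ and $f_\beta(\overline{d})$ meet so that $f_\beta(d)=p^+\overline{d}b^+r^\circ$, and then show $f_\beta(y)$ must spiral until it passes over $y$. Your alternative decomposition (``which edge starts at $p$'') is not obviously equivalent and you give no evidence it closes up; in particular your proposed contradiction in the hard subcase (``the two spirals never meet,'' as in the $r$-start lemma) is not the mechanism that works here --- the surviving subcase is killed by a terminating spiral of $f_\beta(y)$, not by a non-closing pair of spirals. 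To repair the proof you would need to actually enumerate and dispatch the subcases, using the even/odd parity constraints on $f_\beta(\overline{d})$ and $f_\beta(d)$ explicitly.
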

\begin{proof}
In this case, we know $f_\beta(d)$ starts with $p^+$, $f_\beta(b)$ starts with $\overline{d}$ or $y^{\pm\circ}$, and $f_\beta(r)$ starts with $g^{\pm\circ}$. We don't know which edges start where on the left triangle, but consider cases for which edge starts at $d$. The options are: $f_\beta(p)$ starts at $d$, $f_\beta(\overline{d})$ starts at $d$, or $f_\beta(g)$ starts at $d$. If $f_\beta(p)$ starts at $d$, then we must have $f_\beta(p)=dg^{\pm\circ}...$ by trace. See the left of Figure \ref{fig:33trackBCDp}. Then, we can see $f_\beta(b)=\overline{d}b^{\pm\circ}...$, so $f_\beta(p)$ can't start at $d$.

If instead $f_\beta(\overline{d})$ starts at $d$, then a similar argument works by looking at $f_\beta(\overline{d})$ and $f_\beta(b)$. See the center of Figure \ref{fig:33trackBCDp}. We know the next letter in $f_\beta(\overline{d})$ is either $p$ or $g$. If $f_\beta(\overline{d})=dg^{\pm\circ}$, then we must have $f_\beta(b)=\overline{d}b^{\pm\circ}...$ On the other hand, if $f_\beta(\overline{d})=dp^{\pm\circ}...$ then after a quick inspection of $f_\beta(y)$ and $f_\beta(\overline{d})$we can conclude that $f_\beta(y)=dp^-g^{\pm\circ}...$ From here, we can check that either $f_\beta(y)$ passes over $y$, $f_\beta(r)$ passes over $r$, or $f_\beta(g)$ passes over $g$.

The last case is if $f_\beta(g)$ starts at $d$. Here, we know $f_\beta(g)=dp^{\pm\circ}...$ and $f_\beta(d)=p^+\overline{d}...$ Because by assumption $f_\beta(\overline{d})$ passes over $d$ or $\overline{d}$ after an even number of times, we must have $f_\beta(\overline{d})=r^-b^-d...$ and we can quickly conclude that $f_\beta(d)$ and $f_\beta(\overline{d})$ must meet here. So, then, $f_\beta(d)=p^+\overline{d}b^+r^\circ$. We can then see that $f_\beta(y)=r^-b^-dp^-g^{\pm\circ}...$, as in the right of Figure \ref{fig:33trackBCDp}. Looking at $f_\beta(r)$ now, we find $f_\beta(r)=g^\circ$. But, then $f_\beta(y)$ will continue spiraling around the outside of the track unless it eventually passes over $y$.
\end{proof}

\begin{lem}\label{lem:33trackBCDg}
If $f_\beta(d)$ starts at $g$, then $\psi_h$ has a fixed point.
\end{lem}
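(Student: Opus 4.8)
The plan is to run the same kind of argument as in Lemmas~\ref{lem:33trackBCDb}--\ref{lem:33trackBCDp}. Assume for contradiction that $\psi_h$ is FPF. From the hypothesis that $f_\beta(d)$ starts at $g$, together with the fact that $f_\beta$ swaps the two infinitesimal triangles of $\tau$ and respects their cusp structure, I would first extract the forced initial data of $f_\beta$: matching the cusp of the $d$-triangle where $d$ emanates to the cusp of the other triangle where $g$ emanates pins down the first letters of $f_\beta(b)$ and $f_\beta(r)$ (possibly after an initial stretch of $r$--$b$ twisting, as in the proof of Lemma~\ref{lem:33trackBCDr}), and reading the triangle correspondence in the other direction shows that the edge of $\tau$ whose image begins at $d$ is one of three explicit edges --- the same trichotomy that organizes the proof of Lemma~\ref{lem:33trackBCDp}. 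The proof then splits into those three subcases.

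In each subcase I would chase the forced prefixes of the relevant image paths, using (i) the Trace Lemma for jointless monogons (Corollary~\ref{lem:tracemon}): $e$ never appears in $f_\beta(e)$ for $e\neq d$; and (ii) the standing parity constraint on the occurrences of $d,\overline{d}$ in $f_\beta(d)$ (we are in the regime where $f_\beta(d)$ passes over $\overline{d}$ after an odd number of letters). Each branch should terminate in one of three ways: some real edge $e\neq d$ is forced either to reappear in $f_\beta(e)$ or to be the final letter of $f_\beta(e)$ --- impossible under FPF by Corollary~\ref{lem:tracemon}, hence a contradiction; or, after finitely many forced turns, $f_\beta(d)$ and $f_\beta(\overline{d})$ are compelled to keep spiralling around the outside of $\tau$ without ever reconnecting --- impossible for the finite train track map of a pseudo-Anosov braid, since an infinite outer spiral corresponds to a reducible braid, as in the automaton analysis of Figure~\ref{fig:automaton} and the proof of Proposition~\ref{prop:33track}; or, tracking $f_\beta(y)$ or $f_\beta(r)$ along the forced prefix, one of them is seen to pass over itself, which again produces an interior fixed point of the lift by Corollary~\ref{lem:tracemon}. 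Figure~\ref{fig:33trackBCDg} records the configurations that arise in the three subcases.

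As with the earlier lemmas, the main obstacle is purely the bookkeeping. Each of the three subcases has several further branch points --- the sign $\pm$ at each traversal, the orientation of $d$ versus $\overline{d}$, and where each right-triangle edge lands --- and every branch must be followed until it reaches one of the three terminal situations above. The ``spirals forever'' branches are the delicate ones, since there one has to correctly recognize that the only consistent continuation is an infinite outer spiral (equivalently, a reducible outer loop) rather than something that closes up; this is precisely where drawing out the picture in Figure~\ref{fig:33trackBCDg} is indispensable.
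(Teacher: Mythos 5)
Your proposal correctly identifies the general machinery (trace lemma, the parity constraint on $d,\overline{d}$, the triangle-swapping structure) and the kinds of terminal contradictions that appear throughout Section 5. But as written it is a plan for a proof, not a proof: none of the forced prefixes are actually computed, none of the branches are actually closed, and for a lemma whose entire mathematical content \emph{is} the combinatorial case-chase, deferring ``the bookkeeping'' defers everything. In particular, you never verify that each branch really does terminate in one of your three ways rather than, say, leaving a consistent train track map (which is exactly what happens in Case A, where the $\beta_i$ survive); the only way to rule that out is to run the chase.

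There is also a concrete organizational discrepancy. You propose to organize the subcases by which edge's image begins at $d$, ``as in Lemma~\ref{lem:33trackBCDp}.'' The actual argument here is shorter and pivots differently: since $f_\beta(d)$ starts at $g$ and must reach $\overline{d}$ after an odd number of letters, one gets $f_\beta(d)=g^-\overline{d}\ldots$ immediately, which forces $f_\beta(r)=\overline{d}b^{\pm\circ}\ldots$; the trichotomy is then over which edge's image starts at $b$ (necessarily with $b^-$, else $f_\beta(r)$ passes over $r$). The candidates are $f_\beta(\overline{d})$, $f_\beta(p)$, and $f_\beta(g)$: the first contradicts the standing assumption that $f_\beta(\overline{d})$ meets $d$ only after an even number of letters, and the other two quickly force a trace violation. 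No ``infinite spiral'' branch arises in this lemma. Your alternative trichotomy might be made to work, but you would need to justify why those are the only three options and then actually close each branch; as it stands the argument has a genuine gap.
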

\begin{proof}
Here, we know $f_\beta(d)=g^-\overline{d}...$ (it's possible that $f_\beta(d)$ twists over $g$ and $y$ some number of times before going to $\overline{d}$, but this won't matter for our analysis). It follows that $f_\beta(r)=\overline{d}b^{\pm\circ}...$. See the left of Figure \ref{fig:33trackBCDg}. Now, consider cases for which edge starts at $b$; whichever edge it is will start with $b^-$ because otherwise $f_\beta(r)$ will pass over $r$.

If $f_\beta(\overline{d})$ starts with $b^-$, then $f_\beta(\overline{d})=b^-d...$, so $f_\beta(\overline{d})$ passes over $d$ after an odd number of letters. But we're assuming this is not the case. And, if $f_\beta(p)$ starts with $b^-$, then it will pass over $p$. Finally, if $f_\beta(g)$ starts with $b^-$, then we must have $f_\beta(g)=b^-dp^{\pm\circ}...$, as in the right of Figure \ref{fig:33trackBCDg}. From there, it is not so hard to see that either $f_\beta(b)$ passes over $b$ or $f_\beta(g)$ passes over $g$.
\end{proof}

\begin{lem}\label{lem:33trackBCDy}
If $f_\beta(d)$ starts at $y$, then $\psi_h$ has a fixed point.
\end{lem}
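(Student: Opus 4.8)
The plan is to argue exactly as in Lemmas \ref{lem:33trackBCDb}--\ref{lem:33trackBCDg}: track the images of the real edges of $\tau$ under $f_\beta$, letter by letter, and repeatedly invoke the trace lemma for jointless monogons (Corollary \ref{lem:tracemon}), which forbids any real edge $e\neq d$ from occurring in $f_\beta(e)$, together with the parity constraint on occurrences of $d$ and $\overline{d}$ recorded in the setup for this subsection.

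First I would pin down $f_\beta(d)$ itself. We are in cases B, C, or D, and this part of the subsection operates under the standing assumption that $f_\beta(\overline{d})$ passes over $d$ only after an even number of letters; by the dichotomy established just before Lemma \ref{lem:33trackBCDp}, this forces $f_\beta(d)$ to pass over $\overline{d}$ after an odd number of letters. Combined with the hypothesis that $f_\beta(d)$ starts at $y$, this means $f_\beta(d)=y^{\pm}\overline{d}\dots$, possibly after an initial stretch of twisting over $y$ and $g$ that, just as in Lemma \ref{lem:33trackBCDg}, will not affect the argument. Reading off how $f_\beta$ permutes the cusps of the two (swapped) infinitesimal triangles then determines which real edge begins each of $f_\beta(r)$, $f_\beta(b)$, $f_\beta(g)$, $f_\beta(p)$, and $f_\beta(\overline{d})$; in particular exactly one of $f_\beta(r)$, $f_\beta(b)$, $f_\beta(\overline{d})$ starts at the edge $p$ adjacent to $d$ on the right triangle.

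Next I would run the casework on which of $f_\beta(r)$, $f_\beta(b)$, $f_\beta(\overline{d})$ starts at $p$ (and, in the $f_\beta(\overline{d})$ subcase, on the following letter, which must be $p$ or $g$), paralleling the case splits in Lemmas \ref{lem:33trackBCDp} and \ref{lem:33trackBCDg}. In each branch I would push the image word forward two or three letters, using Corollary \ref{lem:tracemon} and the parity constraint on $d$/$\overline{d}$, until one of two things happens: some real edge $e\neq d$ is forced to appear in $f_\beta(e)$, contradicting the trace lemma and hence giving an interior fixed point of $\psi_h$; or else $f_\beta(d)$ and $f_\beta(\overline{d})$ are forced to keep spiraling around the outside of $\tau$ and can never reconnect at $d$ compatibly with the parity constraint, which is impossible for a finite train path and again yields the fixed point. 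This ``infinite spiral'' contradiction is the same one used at the ends of Lemmas \ref{lem:33trackBCDr} and \ref{lem:33trackBCDp}; Figure \ref{fig:33trackBCDy} records the relevant configurations.

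I expect the only real difficulty to be the bookkeeping of which side of each triangle the image of each edge emanates from, so that the ``passes over itself'' deductions are genuinely valid; this is no worse here than in the four preceding lemmas, and in fact the $y$ case should be the cleanest, since $y$ is the outermost edge and the spiral argument applies most directly. Once this lemma is in hand, cases B, C, and D are completely eliminated, and it remains only to treat case A.
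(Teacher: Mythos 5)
There is a genuine gap here: your write-up is a description of the method rather than an execution of it, and for this lemma the execution \emph{is} the content. You correctly name the tools (Corollary \ref{lem:tracemon}, the parity constraint on occurrences of $d$ and $\overline{d}$, the fact that the triangles are swapped in cases B--D), but you never actually push the image words far enough to produce a forbidden occurrence of some $e$ in $f_\beta(e)$; you only assert that ``in each branch'' one of two contradictions will appear. The actual proof has to pin down $f_\beta(d)=y^+g^+p^+d\dots$ (after possible initial twisting), split into the two genuinely possible subcases --- $f_\beta(\overline{d})$ starts with $d$, or $f_\beta(g)$ starts with $d$ --- and in each derive a concrete violation (in the first, $f_\beta(d)=y^+g^+p^+d^\circ$ forces $f_\beta(y)=dp^-g^-y^{\pm\circ}\dots$; in the second, $f_\beta(\overline{d})=r^-b^-d\dots$ forces $f_\beta(d)=y^+g^+p^+\overline{d}b^+r^\circ$ and then $f_\beta(y)$ eventually passes over $y$). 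None of these steps appear in your proposal.

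Moreover, the two concrete claims you do make are wrong for the Camel track. First, $f_\beta(d)$ cannot have the form $y^{\pm}\overline{d}\dots$: there is no train path on $\tau$ running directly from $y$ to the bottom edge, so a path starting at $y$ must traverse $g$ and $p$ before reaching $d$ or $\overline{d}$ (consistently with the odd-parity requirement, since $y,g,p$ is three letters). Second, your proposed case division --- which of $f_\beta(r)$, $f_\beta(b)$, $f_\beta(\overline{d})$ starts at $p$ --- is incompatible with the swapped-triangle structure: $\overline{d}$ is incident to the right triangle, so $f_\beta(\overline{d})$ must begin at an edge incident to the \emph{left} triangle ($r$, $b$, or $d$), never at $p$; and since $f_\beta(d)$ is assumed to start at $y$, the images $f_\beta(r)$ and $f_\beta(b)$ simply occupy $p$ and $g$ in some order, which is not where the useful case split lives. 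The productive split is on which of $f_\beta(\overline{d})$ or $f_\beta(g)$ starts with $d$ (with $f_\beta(p)$ excluded because whatever starts with $d$ must continue to $p$). As written, the proposal would not compile into a correct argument without redoing essentially all of the combinatorial work.
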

\begin{proof}
If $f_\beta(d)$ starts at $y$, then we must have $f_\beta(d)=y^+g^+p^+d...$ (possibly after some initial twisting which will not matter for our analysis). See the left of Figure \ref{fig:33trackBCDy}. We'll consider cases for which edge starts with $d$; whichever edge it is must then continue to $p$ afterward. In particular, we know $f_\beta(p)$ cannot start with $d$ here.

If $f_\beta(\overline{d})$ starts with $d$, then we can quickly deduce that $f_\beta(d)$ and $f_\beta(\overline{d})$ must meet here, so that $f_\beta(d)=y^+g^+p^+d^\circ$. See the center of Figure \ref{fig:33trackBCDy}. But, we then have $f_\beta(y)=dp^-g^-y^{\pm\circ}...$

If instead $f_\beta(g)$ starts with $d$, then we also know $f_\beta(\overline{d})$ and $f_\beta(y)$ both start with $r$. By the assumption that $f_\beta(\overline{d})$ passes over $d$ after an even number of letters, we know $f_\beta(\overline{d})=r^-b^-d...$, as in the right of Figure \ref{fig:33trackBCDy}. We can quickly conclude that $f_\beta(d)$ and $f_\beta(\overline{d})$ must meet here, so that $f_\beta(d)=y^+g^+p^+\overline{d}b^+r^\circ$. But, then $f_\beta(y)$ will eventually pass over $y$.
\end{proof}

\begin{figure}
    \centering
    \includegraphics{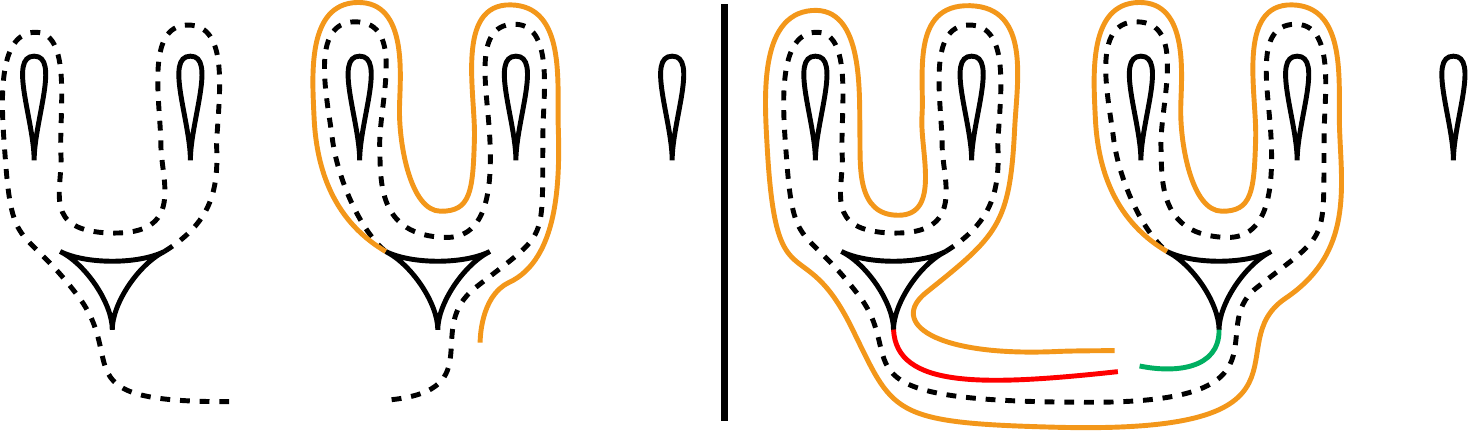}
    \caption{Visual aids for the proof of Lemma \ref{lem:33trackAbp}}
    \label{fig:33trackAbp}
\end{figure}

\begin{figure}
    \centering
    \includegraphics{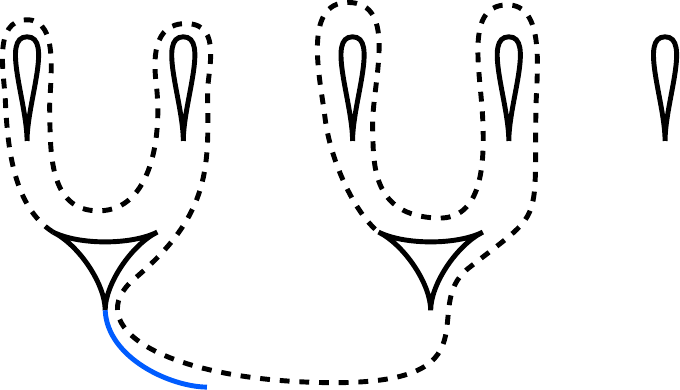}
    \caption{Visual aid for the proof of Lemma \ref{lem:33trackArp}.}
    \label{fig:33trackArp}
\end{figure}

\begin{figure}
    \centering
    \includegraphics{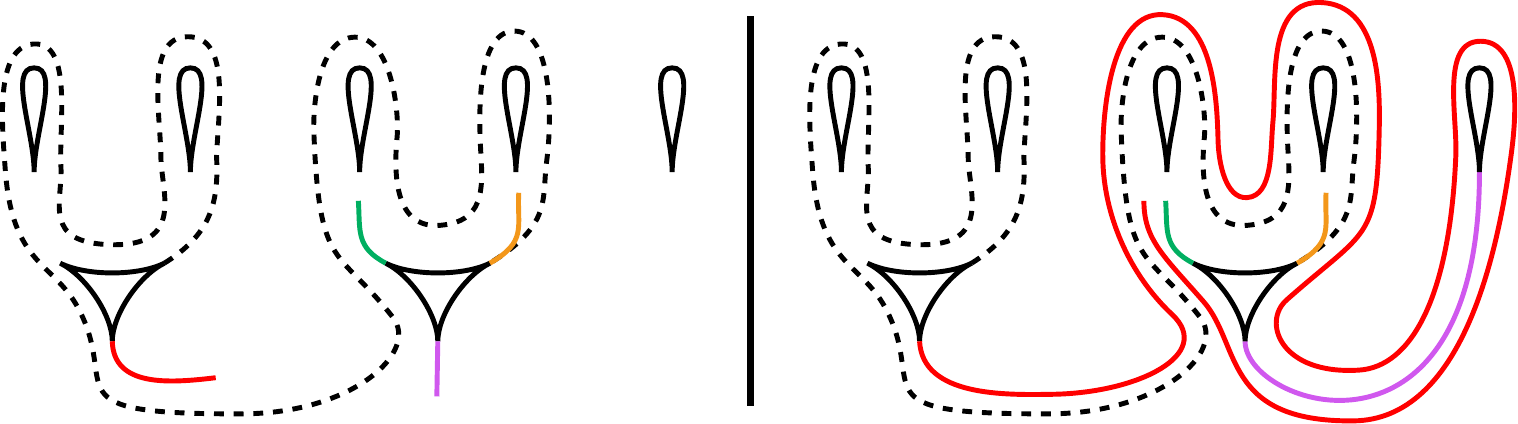}
    \caption{Visual aids for the proof of Lemma \ref{lem:33trackAbg}}
    \label{fig:33trackAbg}
\end{figure}

\FloatBarrier

\begin{figure}[h!]
    \centering
    \includegraphics{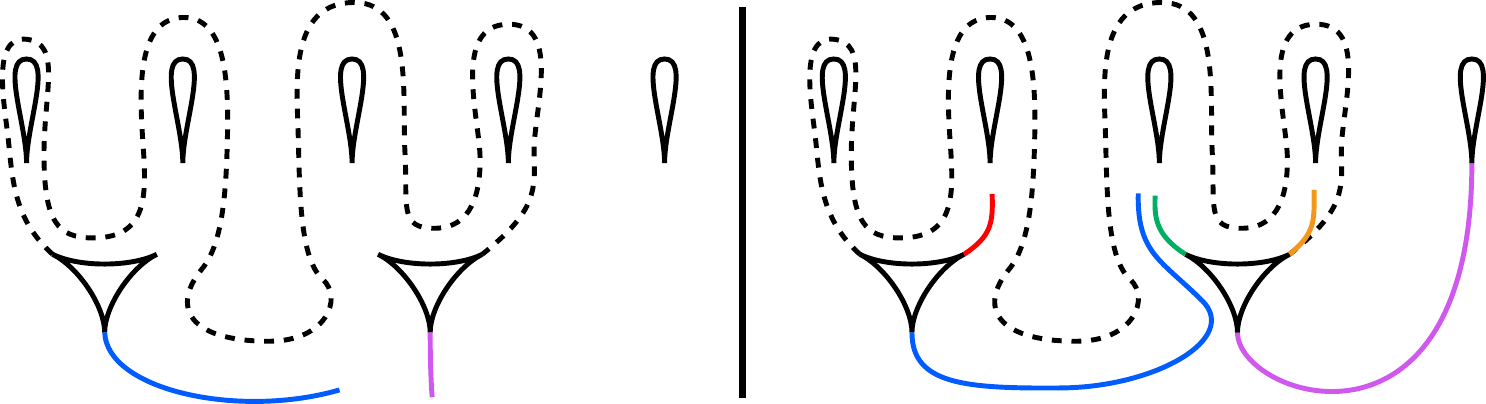}
    \caption{Visual aids for the proof of Lemma \ref{lem:33trackArg}.}
    \label{fig:33trackArg}
\end{figure}

\subsection{Case A}

This is the case in which the candidate braids $\beta_i$ arise, and we will see them appear at the end of this subsection. In case A, recall that we know the infinitesimal triangles of $\tau$ are fixed by $f_\beta$, and that $f_\beta(d)$ and $f_\beta(\overline{d})$ both pass over $d$ after an even number of letters. As in cases B,C,D, we will split our argument into cases based on $f_\beta(d)$ and $f_\beta(\overline{d})$. This time, we know that $f_\beta(d)$ does not start at $d$ and $f_\beta(\overline{d})$ does not start at $\overline{d}$, because otherwise $r$, $b$, $p$, and $g$ would all pass over themselves. We also know that $f_\beta(\overline{d})$ does not start at $y$, because then $y$ would pass over itself.

So, we know $f_\beta(d)$ must start at $r$ or $b$, and $f_\beta(\overline{d})$ must start at $p$ or $g$. The four pairs of choices for $f_\beta(d)$ and $f_\beta(\overline{d})$ are the final cases to complete the proof of Proposition \ref{prop:33track} and Theorem \ref{thm:2-34}.

\begin{lem}\label{lem:33trackAbp}
If $f_\beta(d)$ starts at $b$ and $f_\beta(\overline{d})$ starts at $p$, then $\psi_h$ has a fixed point.
\end{lem}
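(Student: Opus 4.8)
The plan is to follow the template of Lemmas \ref{lem:33trackBCDb}--\ref{lem:33trackBCDy}: starting from the two prescribed initial letters, we propagate the forced letters of the train track map $f_\beta$ around the Camel track $\tau$, one real edge at a time, using only (i) that in Case A the map $f_\beta$ fixes each infinitesimal triangle and hence rotates it, so that the hypothesis on one edge at a triangle determines the initial real-edge letter of $f_\beta(e)$ for every edge $e$ meeting that triangle; (ii) that $f_\beta$ carries train paths to train paths, so consecutive letters of $f_\beta(e)$ must be compatible along the infinitesimal edges of $\tau$; (iii) the Trace Lemma for jointless monogons (Corollary \ref{lem:tracemon}), which forbids $e$ from appearing in $f_\beta(e)$ for every real edge $e\neq d$; and (iv) the Case A parity constraint, that every occurrence of $d$ or $\overline d$ in $f_\beta(d)$ and in $f_\beta(\overline d)$ is preceded by an even number of letters. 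In each branch the goal is to reach one of the three standard contradictions: a monogon edge $e$ occurs in $f_\beta(e)$; some $f_\beta(e)$ terminates at its own initial edge; or (iv) forces $f_\beta(d)$ and $f_\beta(\overline d)$ to spiral around $\partial D_5$ indefinitely without re-crossing $d$, which is impossible for a finite train path.

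Concretely, I would first feed the hypothesis ``$f_\beta(d)$ starts at $b$'' through the rotation of the triangle containing the tail of $d$ to read off the initial letters of $f_\beta(b)$ and of the third edge at that triangle, and likewise feed ``$f_\beta(\overline d)$ starts at $p$'' through the rotation of the other triangle to pin down the initial letters of $f_\beta(g)$, $f_\beta(y)$, and the second letter of $f_\beta(\overline d)$ (using Corollary \ref{lem:tracemon} for the monogon edge $p$). I would then push each of these strings forward: at every recurrence of $d$ or $\overline d$ the string is pinned by (iv), and at every monogon edge $e$ it is pinned by (iii), so after boundedly many steps the analysis funnels into a case split --- exactly as in Lemmas \ref{lem:33trackBCDb} and \ref{lem:33trackBCDr} --- on which real edge's image starts at the ambiguous triangle corner, and on the side $\pm$ of the first few monogon detours. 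Each resulting branch is dispatched by producing a monogon edge that passes over itself or by driving $f_\beta(d)$ and $f_\beta(\overline d)$ into the infinite outer spiral; the configurations are recorded in Figure \ref{fig:33trackAbp}.

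The main obstacle is checking that \emph{every} branch genuinely terminates in a contradiction. In Cases B, C, D the swap of the two triangles interacted cleanly with an odd-parity count, but in Case A the relevant parity is even --- exactly the parity realised by the genuine FPF braids $\Delta^{4k+2}\beta_i$ of Proposition \ref{prop:candidates} --- so the combinatorics is a priori consistent with a finite map, and the contradiction for the $(b,p)$ start has to be extracted from the fine local structure of $\tau$ rather than from a parity count alone. Carrying the $\pm$ decorations correctly while chasing the handful of sub-branches, and confirming that none stabilises to a consistent finite train track map, is the delicate part. The remaining Case A configurations --- the $(r,p)$, $(b,g)$, and $(r,g)$ starts treated in Lemmas \ref{lem:33trackArp}, \ref{lem:33trackAbg}, \ref{lem:33trackArg} --- are then handled by entirely parallel chases, the last of which survives and produces the braids $\beta_1,\beta_2,\beta_3$, completing Proposition \ref{prop:classify}.
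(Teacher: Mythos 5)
Your strategy is the paper's strategy --- propagate forced letters around the Camel track using triangle rotation, train-path compatibility, the trace lemma for monogon edges, and the Case~A even-parity constraint on $f_\beta(d)$ and $f_\beta(\overline d)$ --- so there is no disagreement about method. But the proposal stops at the method: the entire content of the lemma is the finite chase itself, and you explicitly defer ``carrying the $\pm$ decorations correctly while chasing the handful of sub-branches'' as the delicate part left undone. As written, this is a plan for a proof rather than a proof, and none of the specific forced deductions that actually close out the $(b,p)$ case appear.

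Concretely, the chase you would need to run is short and fully determined. The even-parity constraint forces $f_\beta(d)=b^+r^+d\cdots$ and $f_\beta(\overline d)=p^-g^-\cdots$, and then $f_\beta(\overline d)=p^-g^-\overline d\cdots$ since otherwise $y$ appears in $f_\beta(y)$. The key step you do not anticipate is the ``meeting'' argument: the two images $f_\beta(d)$ and $f_\beta(\overline d)$ must terminate against each other at this point, because whichever one continued ``inside'' the other would be forced to cross $d$ again after an \emph{odd} number of letters, violating the Case~A parity. This pins down $f_\beta(d)=b^+r^+dg^+p^\circ$ completely and forces $f_\beta(y)=p^-g^-\overline d\,r^-b^-d\cdots$. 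The contradiction then comes from a final two-way split on whether $f_\beta(g)$ (starting at $\overline d$) or $f_\beta(r)$ (starting at $d$) passes ``above'' the other at the cusp: the first option makes $g$ appear in $f_\beta(g)$, and the second makes either $r$ appear in $f_\beta(r)$ or $y$ appear in $f_\beta(y)$. Your toolkit contains everything needed to find these steps, but until the branches are actually enumerated and each one is shown to violate trace, the lemma is not established --- especially since, as you yourself note, the Case~A parity is the one realized by the genuine FPF braids, so no soft parity argument can substitute for the explicit chase.
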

\begin{proof}
In this case, we can completely work out $f_\beta(d)$. Note that $f_\beta(d)=b^+r^+d...$ and $f_\beta(\overline{d})=p^-g^-...$ because each needs to pass over $d$ after an even number of letters. From here, note that $f_\beta(\overline{d})=p^-g^-\overline{d}...$ because otherwise $y$ passes over itself. This situation is pictured on the left of Figure \ref{fig:33trackAbp}. Now, $f_\beta(d)$ and $f_\beta(\overline{d})$ must meet here: otherwise, whichever end goes ``inside" the other would have to pass over $d$ again after an odd number of letters. For example, if $f_\beta(\overline{d})$ goes above $f_\beta(d)$ and inside it to continue towards $r$, then $f_\beta(\overline{d})$ will eventually be forced to pass over $d$ after an odd number letters.

So, we must have $f_\beta(d)=b^+r^+dg^+p^\circ$. Now, note that $f_\beta(y)=p^-g^-\overline{d}r^-b^-d...$ From here, look at $r$ and $g$. We know that $f_\beta(r)$ starts at $d$ and $f_\beta(g)$ starts at $\overline{d}$. See the right of Figure \ref{fig:33trackAbp}. If $f_\beta(g)$ goes ``above" $r$ here, then we must have $f_\beta(g)$ will eventually pass over $g$. On the other hand, if $f_\beta(r)$ goes ``above" $g$, then either $f_\beta(r)$ will pass over $r$, or $f_\beta(y)$ will pass over $y$ (e.g. if $f_\beta(r)=dp^-g^-y...$)
\end{proof}

\begin{lem}\label{lem:33trackArp}
If $f_\beta(d)$ starts at $r$ and $f_\beta(\overline{d})$ starts at $p$ then $\psi_h$ has a fixed point.
\end{lem}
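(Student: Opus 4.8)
The plan is to mimic the case analysis of the preceding lemmas in this subsection, tracking the images $f_\beta(d)$ and $f_\beta(\overline d)$ on the Camel track and then propagating the constraints to the other real edges until some edge is forced to appear in its own image (which, by Corollary \ref{lem:tracemon}, produces an interior fixed point of $\psi_h$, or equivalently a nonzero diagonal entry of $M(f_h)$ via the Trace Lemma). Since we are in Case A, both $f_\beta(d)$ and $f_\beta(\overline d)$ must pass over $d$ after an even number of letters, and by the setup of this subsection $f_\beta(d)$ starts at $r$ (the hypothesis) while $f_\beta(\overline d)$ starts at $p$.

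First I would pin down $f_\beta(d)$ and $f_\beta(\overline d)$ near their starts: $f_\beta(d)=r^{?}\,b^{?}\,d\dots$ and $f_\beta(\overline d)=p^{-}g^{-}\dots$, forced (up to initial twisting that will be irrelevant) by the parity requirement from the Trace Lemma together with the geometry of $\tau$, exactly as in Lemma \ref{lem:33trackAbp}. Then I would argue that $f_\beta(\overline d)$ cannot head toward $y$ next (else $y$ appears in $f_\beta(y)$ since everything downstream of that point is common to $f_\beta(\overline d)$ and $f_\beta(y)$), so it continues $f_\beta(\overline d)=p^{-}g^{-}\overline d\dots$, and the two ends $f_\beta(d)$ and $f_\beta(\overline d)$ are forced to meet — any attempt to route one inside the other re-enters $d$ after an odd number of letters, violating Case A. This fixes $f_\beta(d)$ completely (something like $f_\beta(d)=r^{+}dg^{+}p^{\circ}$ or the symmetric variant, to be read off from the figure), and pins down $f_\beta(y)$ as well via the shared tail with $f_\beta(\overline d)$.

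Next I would run the triangle-by-triangle elimination on the remaining edges $r,b,p,g$: identify which edge starts at each slot of the two infinitesimal triangles not yet used, observe that in each slot only one choice of decoration ($+$ versus $-$) avoids immediately forcing a repeat (e.g. starting with the ``wrong'' sign pushes the image back onto $r$, $b$, or $g$), and then follow that forced choice a few more letters until some edge unavoidably passes over itself or $f_\beta(y)$ passes over $y$. This is precisely the kind of terminal argument that closes Lemmas \ref{lem:33trackAbp} through \ref{lem:33trackArg}, and I expect it to close here with essentially no new ideas — the visual aid (an analogue of Figure \ref{fig:33trackArp}) carries most of the weight.

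The main obstacle is bookkeeping rather than conceptual: there are several sub-branches for where each of $r,b,p,g$ begins on the two triangles, and one must check that every branch terminates in a self-overlap. I would organize this by first fixing $f_\beta(d)$, $f_\beta(\overline d)$, and $f_\beta(y)$ once and for all (the ``meeting'' argument does all the real work), and only then doing the short finite enumeration for the other four edges, so that the figure can be drawn once and each branch dispatched in a sentence. No genuinely hard step is expected — this lemma is one of the routine cases completing the Case A analysis, and hence the proof of Proposition \ref{prop:classify} and Theorem \ref{thm:2-34}.
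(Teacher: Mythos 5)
Your proposal is correct and follows the paper's approach: the same meeting argument pins down $f_\beta(d)=r^-b^-dg^+p^\circ$ (note the image must pass over $b$ before reaching $d$, by the even-parity requirement of Case A), and the contradiction then comes from propagating to the remaining edges. The only difference is that the anticipated branch enumeration collapses immediately --- since the left triangle is fixed, $f_\beta(b)$ is forced to start at $d$ and follow the rest of the image of $d$, giving $f_\beta(b)=dg^+p^+\overline{d}b^{\pm\circ}\dots$, so $b$ passes over itself and no further casework is needed.
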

\begin{proof}
Just as in the previous lemma, we can completely determine $f_\beta(d)$ here. Using a very similar argument to the previous lemma, we can conclude that $f_\beta(d)=r^-b^-dg^+p^\circ$. Now, simply note that $f_\beta(b)=dg^+p^+\overline{d}b^{\pm\circ}...$ See Figure \ref{fig:33trackArp}.
\end{proof}

\begin{lem}\label{lem:33trackAbg}
If $f_\beta(d)$ starts at $b$ and $f_\beta(\overline{d})$ starts at $g$, then $\psi_h$ has a fixed point.
\end{lem}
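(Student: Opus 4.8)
The plan is to follow the template already established in Lemmas~\ref{lem:33trackAbp} and \ref{lem:33trackArp}: first pin down $f_\beta(d)$ completely, then read off a forced self-intersection of some real edge and invoke the trace lemma. We are in Case~A, so both $f_\beta(d)$ and $f_\beta(\overline{d})$ must pass over $d$ after an even number of letters, and by the trace corollary for jointless monogons (Corollary~\ref{lem:tracemon}) none of $r,b,g,p,y$ can appear in its own image. Since $f_\beta(d)$ starts at $b$, the only way to reach the first occurrence of $d$ after an even number of letters while avoiding $b$ in $f_\beta(b)$ is $f_\beta(d)=b^+r^+d\cdots$ (possibly after some initial twisting around the left triangle, which does not affect the argument). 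Since $f_\beta(\overline{d})$ starts at $g$, the analogous bookkeeping on the right triangle, together with the constraint that $y$ cannot appear in $f_\beta(y)$, forces $f_\beta(\overline{d})=g^-p^-\overline{d}\cdots$ (the precise decorations to be read off Figure~\ref{fig:33trackAbg}).

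Next I would run the \emph{inside/outside} argument from Lemma~\ref{lem:33trackAbp}: the images $f_\beta(d)$ and $f_\beta(\overline{d})$ approach the edge $d$ from opposite ends, and if they do not simply terminate against each other there, then whichever one is nested inside the other is forced to cross $d$ again after an \emph{odd} number of letters, contradicting the Case~A parity. Hence the two images meet, which determines $f_\beta(d)$ in full, say $f_\beta(d)=b^+r^+d\,p^+g^\circ$ (again, the exact terminal letter and decorations to be confirmed against the figure). From here the tails of the remaining edges are forced: in particular $f_\beta(y)$ must follow the reverse of $f_\beta(d)$ across $d$, giving $f_\beta(y)=g^-p^-\overline{d}\,r^-b^-d\cdots$, and the starts of $f_\beta(r)$ and $f_\beta(g)$ are pinned to the two $d$-cusps of the right triangle.

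Finally I would close the case by the same short local analysis near the right triangle as in the earlier lemmas: with $f_\beta(g)$ forced to begin at one $d$-cusp and $f_\beta(r)$ at the other, one of the two possibilities makes $f_\beta(g)$ eventually run over $g$, and the other makes either $f_\beta(r)$ run over $r$ or $f_\beta(y)$ run over $y$. In every branch some real edge appears in its own image, so $M(f_h)$ is not traceless, and by Proposition~\ref{prop:fix} (via the trace lemma) $\psi_h$ has an interior fixed point.

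The work here is entirely combinatorial diagram-chasing on $\tau$; the conceptual content is identical to Lemma~\ref{lem:33trackAbp}, so the only real care needed is bookkeeping. The main obstacle is making sure the ``initial twisting'' at the $b$--$r$ and $g$--$p$ cusps really is irrelevant and that the Case~A parity constraint is applied to the correct occurrences of $d$ and $\overline{d}$; and, since this lemma may differ from Lemma~\ref{lem:33trackAbp} in exactly which edge ($p$ versus $y$) is forced to self-intersect at the end, I would verify the terminal step directly from Figure~\ref{fig:33trackAbg} rather than quoting the earlier proof verbatim.
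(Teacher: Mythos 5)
Your first half is right and matches the paper: in Case A both $f_\beta(d)$ and $f_\beta(\overline{d})$ must reach $d$ after an even number of letters, and the ``meeting'' argument from Lemma~\ref{lem:33trackAbp} pins down $f_\beta(d)=b^+r^+d\,p^-g^\circ$ (your $p^+$ should be $p^-$, but that is the kind of decoration you flagged for checking). The problem is the second half, which is exactly where the content of this lemma lives and where it stops being a verbatim copy of Lemmas~\ref{lem:33trackAbp} and~\ref{lem:33trackArp}. Two of the facts you assert as ``forced'' are not, and one is inconsistent with the track's geometry. First, $f_\beta(y)$ is \emph{not} forced to follow the reverse of $f_\beta(d)$ across $\overline{d}$: a priori $f_\beta(y)$ can terminate at $g$ without ever reaching $\overline{d}$, and ruling out $f_\beta(y)=g^\circ$ requires a separate carrying argument (if $f_\beta(y)=g^\circ$ then the image of $r$ gets absorbed into the switch of the right triangle near $g$). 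Second, $f_\beta(r)$ and $f_\beta(g)$ do not both start at ``$d$-cusps of the right triangle'': $r$ is attached to the \emph{left} triangle, whose rotation $d\mapsto b$ forces $f_\beta(r)$ to start at the left $d$-cusp, while $f_\beta(g)$ starts at $p$ on the right triangle. So the final dichotomy you describe is not set up correctly, and ``in every branch some real edge appears in its own image'' is asserted rather than verified.

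The actual endgame goes through an intermediate step you skip entirely: Corollary~\ref{lem:tracemon} forces $f_\beta(p)=y^\circ$, and only then can one deduce $f_\beta(r)=dp^-g^-y^-p^{\pm\circ}\cdots$ (the passage over $y$ comes from having to route around the terminus of $f_\beta(p)$). The contradiction is then a dichotomy on whether $f_\beta(r)$ continues to $\cdots p^-b^{\pm\circ}$: if not, $f_\beta(g)$ is dragged over $g$; if so, $f_\beta(y)$ is dragged over $y$ (after disposing of the $f_\beta(y)=g^\circ$ sub-case). Since none of this appears in your write-up and the steps you do commit to would not produce it, the proof as proposed is incomplete: you have correctly identified the template but not executed the part of the diagram chase that is specific to this lemma.
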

\begin{proof}
As in the previous two lemmas, we can completely determine $f_\beta(d)$: it must be $f_\beta(d)=b^+r^+dp^-b^\circ$. See the left of Figure \ref{fig:33trackAbg}. Next, look at $f_\beta(p)$. We must have $f_\beta(p)=y^\circ$ by trace. From here, note that $f_\beta(r)=dp^-g^-y^-p^{\pm\circ}...$ as shown on the right of Figure \ref{fig:33trackAbg}. To finish the proof, consider how $f_\beta(r)$ interacts with $f_\beta(g)$ and $f_\beta(y)$.

$f_\beta(r)$ must continue $f_\beta(r)=dp^-g^-y^-p^-b^{\pm\circ}...$ because otherwise $f_\beta(g)$ eventually passes over $g$ (after following $f_\beta(r)$ backwards for a while). But, in that case, $f_\beta(y)$ must eventually pass over $y$ (note that $f_\beta(y)\neq g^\circ$, because then $f_\beta(r)$ is absorbed into the vertex of the triangle near $g$).
\end{proof}

\begin{lem}\label{lem:33trackArg}
If $f_\beta(d)$ starts at $r$ and $f_\beta(\overline{d})$ starts at $g$, then either $\psi_h$ has a fixed point, or $\beta$ is conjugate (up to full twists) to one of the $\beta_i$.
\end{lem}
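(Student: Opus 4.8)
The plan is to run the same template that proved Lemmas \ref{lem:33trackAbp}--\ref{lem:33trackAbg}, but this time the case analysis does not close up in a contradiction: after pinning down $f_\beta(d)$ and propagating the Trace Lemma, a small number of train track maps survive, and these will be exactly the maps $f_{\beta_1},f_{\beta_2},f_{\beta_3}$ induced by the braids of Proposition \ref{prop:candidates}. So the proof has three stages: determine $f_\beta(d)$ and $f_\beta(\overline d)$ completely; propagate trace constraints through $r,b,g,p,y$, discarding every branch that forces some real edge over itself (equivalently, a fixed point in the lift); and match each survivor with a $\beta_i$ via the corollary to Theorem \ref{thm:ttMCG}.

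First I would pin down $f_\beta(d)$. Since $f_\beta(d)$ starts at $r$ and, in Case A, must pass over $d$ after an even number of letters, it begins $r^-b^-d\cdots$ (possibly after some inconsequential initial twisting over $r$ and $b$). As $f_\beta(\overline d)$ starts at $g$, the continuation past this $d$ must head toward $g$, so $f_\beta(d)=r^-b^-dg^{\pm}\cdots$, and exactly as in the previous Case A lemmas the two ends $f_\beta(d)$ and $f_\beta(\overline d)$ must meet here: if either overshoots and runs "inside" the other, it is forced back over $d$ after an odd number of letters, contradicting the even-parity clause of Lemma \ref{lem:trace} in Case A. This determines $f_\beta(d)$ up to the handful of ways the two ends close up; it also fixes the initial segment of $f_\beta(y)$, since $f_\beta(y)$ must begin by running backward along $f_\beta(\overline d)$. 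Next I would push the images $f_\beta(r),f_\beta(b),f_\beta(g),f_\beta(p),f_\beta(y)$ forward letter by letter. Each of these edges ends at a jointless $1$-marked monogon, so Corollary \ref{lem:tracemon} forbids any of them from appearing in its own image; combined with the cusp structure of $\tau$ (which edges are mutually tangent at each switch) and the parity clause of the Trace Lemma, this kills most branches outright — some edge ends up passing over itself, or a lift picks up an odd side-swapping count — yielding a fixed point in the cover.

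The survivors of this propagation are precisely three train track maps on $\tau$. I would then check directly, against the pictures $\beta_i(\tau)$ in Figure \ref{fig:beta_i}, that these three maps are $f_{\beta_1},f_{\beta_2},f_{\beta_3}$. Since $\beta$ and each $\beta_i$ are carried by the same track $\tau$ and induce the same train track map, the corollary to Theorem \ref{thm:ttMCG} gives $\beta=\Delta^{2k}\beta_i$ for some $k\in\bb{Z}$, so $\beta$ is conjugate to one of the $\beta_i$ up to full twists, as claimed.

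The main obstacle is exhaustiveness and matching in the last stage. Unlike the earlier lemmas, the combinatorics here genuinely branch into the three candidates, so I must be careful that the enumeration in the propagation step really captures every possibility and that no spurious survivor slips through. This is the "additional geometric input" referred to in the outline for the stratum $(1;1^5;3^2)$: at the final branch points the Trace Lemma alone does not kill a leftover option, and I expect to need an argument about how the unstable leaves of $\psi_\beta$ lie relative to $\tau$ — or, equivalently, a check that the surviving combinatorial map is actually realized by a pseudo-Anosov, which for the $\beta_i$ is already supplied by the Perron--Frobenius computation in Proposition \ref{prop:candidates}.
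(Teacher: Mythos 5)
Your proposal follows essentially the same route as the paper: pin down $f_\beta(d)$ completely via the even-parity ``the two ends must meet'' argument, propagate the Trace Lemma through the remaining real edges, and identify the three surviving train track maps with $f_{\beta_1},f_{\beta_2},f_{\beta_3}$ before invoking Theorem \ref{thm:ttMCG}. Two small caveats: the determined path is $f_\beta(d)=r^-b^-dp^-g^\circ$ (it must cross $p$ before terminating at $g$, not hit $g$ immediately after $d$), and the extra geometric input you anticipate needing at the final branch points is not actually required here --- the paper closes every branch with the trace lemma plus elementary carrying observations (e.g.\ an edge being absorbed into a triangle vertex), the genuinely geometric input having already been spent on the Case A/B/C/D triangle-permutation setup.
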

\begin{proof}
By Theorem \ref{thm:ttMCG}, it suffices to show that $f_\beta=f_{\beta_i}$ for some $i\in\{1,2,3\}$.

In this case, we can conclude that $f_\beta(d)=r^-b^-dp^-g^\circ$. See the left of Figure \ref{fig:33trackArg}. Now, look at $f_\beta(p)$. We can quickly conclude that either $f_\beta(p)=y^\circ$ or $f_\beta(p)=\overline{d}b^{\pm\circ}$. In the latter case, $f_\beta(b)=dg^+p^+\overline{d}b^{\pm\circ}...$ So, we must have $f_\beta(p)=y^\circ$, and then we also have $f_\beta(b)=dp^{\pm\circ}$. See the right of Figure \ref{fig:33trackArg}.

From here, we will look at $f_\beta(b)$, $f_\beta(r)$, $f_\beta(g)$, and $f_\beta(y)$. By considering how the images of these four edges interact, we will find that $f_\beta$ must be one of the train track maps induced by the $\beta_i$. To start, note that either $f_\beta(b)=dp^\circ$ or $f_\beta(b)=dp^-g^{-\circ}...$ (if $f_\beta(b)=dp^+...$ then $f_\beta(b)$ will eventually pass over $b$). In the latter case, we can conclude that $f_\beta(b)=dp^-g^\circ$, because otherwise either $f_\beta(g)$ will pass over $g$ or $f_\beta(b)$ will pass over $b$. So, either $f_\beta(b)=dp^\circ$ or $f_\beta(b)=dp^-g^\circ$.

If $f_\beta(b)=dp^\circ$, then $f_\beta(g)=p^+\overline{d}b^{\pm\circ}...$ By looking at $f_\beta(r)$ and $f_\beta(g)$, we can conclude that either $f_\beta(g)=p^+\overline{d}b^\circ$, or $f_\beta(g)=p^+\overline{d}b^+r^\circ$. In the first case, we must then have $f_\beta(r)=b^-dp^-g^\circ$ and $f_\beta(y)=g^+p^+\overline{d}b^+r^\circ$, which is the train track map induced by $\beta_3$. In the second case, where $f_\beta(g)=p^+\overline{d}b^+r^\circ$, we then have $f_\beta(r)=b^\circ$ and $f_\beta(y)=g^\circ$. This is the train track map for $\beta_1$.

If instead $f_\beta(b)=dp^-g^\circ$, then $f_\beta(g)=p^\circ$ and $f_\beta(y)=g^+p^+\overline{d}b^{\pm\circ}$. Here, we must have $f_\beta(y)=g^+p^+\overline{d}b^+r^\circ$, because otherwise $r$ will follow back along the path of $f_\beta(y)$ and get absorbed into the vertex near $b$. From here, we can then conclude $f_\beta(r)=b^\circ$, and this is the train track map for $\beta_2$.
\end{proof}

\bibliographystyle{alpha}
\bibliography{references}

\end{document}